\documentclass[reqno]{amsart}
\pagestyle{plain}
\usepackage{amsmath,amsthm,amsfonts,amssymb,bm }
\usepackage[single]{accents}
\usepackage{cite}
\usepackage{fancyhdr,amscd}
\usepackage{anysize}
\usepackage{graphicx,epsfig}
\usepackage{amsthm,latexsym,amsmath,amssymb}
\usepackage[perpage,symbol*]{footmisc}
\usepackage[colorlinks,linkcolor=black,citecolor=black]{hyperref}
\usepackage{xcolor}
\usepackage{mathrsfs}
\usepackage{cite}
\usepackage{chngcntr}
\numberwithin{figure}{section}
\usepackage{lineno}

\usepackage{indentfirst}
 \usepackage[titletoc]{appendix}
\setlength{\textheight}{9in} \setlength{\textwidth}{6.2in}
\setlength{\oddsidemargin}{0.2in} \setlength{\evensidemargin}{0.2in}
\setlength{\parindent}{0.2in} \setlength{\topmargin}{-0.1in}
\setcounter{section}{0}
\setcounter{equation}{0}

\graphicspath{{figures/}}
\newcommand{\ignore}[1]{}
\numberwithin{equation}{section}
\newcommand{\R}{{\mathbb R}}
\newcommand{\pa}{\partial}
%\renewcommand{\figurename}{Figure}
%%%%%%%%%%%%%%%%%%%%%%%%%%%%%%%%%%%%%%%%%%%%%%%%%%%%%%%%%%

\newtheorem{theorem}{\textbf Theorem}[section]
\newtheorem{lem}{\textbf Lemma}[section]
\newtheorem{rem}{\textbf Remark}[section]

\newtheorem{prop}{\textbf Proposition}[section]

\newtheorem{defin}{\textbf Definition}[section]
\newtheorem{example}{\textbf Example}[section]

\newcounter{remark}
\setcounter{remark}{0}
{\par \stepcounter{remark} {\it Remark
\arabic{section}.\arabic{remark}.}~}{\rm  \par}
\usepackage{amsmath}
\allowdisplaybreaks[4]

%%%%%%%%%%%%%%%%%%%%%%%%%%%%%%%%%%%%%%%%%%%%%%%%%%%%%%%

%%%%%%%%%%%%%%%%%%%%%%%%%%%%%%%%%%%%%%%%%%%%%%%%%%

\allowdisplaybreaks \voffset=-0.2in \numberwithin{equation}{section}

\begin{document}
%\linenumbers
\begin{titlepage}
\title{\bf An energy stable finite element scheme for the three-component Cahn-Hilliard-type model for macromolecular microsphere composite hydrogels}
\end{titlepage}

%\author{Maoqin Yuan\thanks{School of Mathematical Sciences, Beijing Normal University, Beijing, 100875 (\email{201831130027@mail.bnu.edu.cn})}
%\and Wenbin Chen\thanks{Shanghai Key Laboratory of Mathematics for Nonlinear Sciences, School of Mathematical Sciences; Fudan University, Shanghai, China 200433 (\email{wbchen@fudan.edu.cn})}
%\and Cheng Wang\thanks{Department of Mathematics, The University of Massachusetts, North Dartmouth, MA 02747 (\email{cwang1@umassd.edu})}
%\and Steven M. Wise\thanks{Department of Mathematics, The University of Tennessee, Knoxville, TN 37996 (\email{swise1@utk.edu})}
%\and Zhengru Zhang\thanks{Laboratory of Mathematics and Complex Systems, Ministry of Education and School of Mathematical Sciences, Beijing Normal University, Beijing, 100875 (\email{zrzhang@bnu.edu.cn})}}

\author[M.Yuan and W. Cheng and C.Wang and S.M. Wise and Z.Zhang]{Maoqin Yuan, Wenbin Chen, Cheng Wang, Steven M. Wise and Zhengru Zhang}

\address{School of Mathematical Sciences,
Beijing Normal University, Beijing 100875, P.R. China (\email{mqyuan@mail.bnu.edu.cn})}

\address{Shanghai Key Laboratory of Mathematics for Nonlinear Sciences, School of Mathematical Sciences; Fudan University, Shanghai, China 200433 (\email{wbchen@fudan.edu.cn})}

\address{Department of Mathematics, The University of Massachusetts, North Dartmouth, MA 02747, USA (\email{cwang1@umassd.edu})}

\address{Department of Mathematics, The University of Tennessee, Knoxville, TN 37996, USA (\email{swise1@utk.edu})}

\address{School of Mathematical Sciences,
Beijing Normal University and Laboratory of Mathematics and Complex Systems,
Ministry of Education, Beijing 100875, P.R. China (\email{zrzhang@bnu.edu.cn})}

\maketitle

\begin{abstract}
In this article, we present and analyze a finite element numerical scheme for a three-component macromolecular microsphere composite (MMC) hydrogel model, which takes the form of a ternary Cahn-Hilliard-type equation with Flory-Huggins-deGennes energy potential. The numerical approach is based on a convex-concave decomposition of the energy functional in multi-phase space, in which the logarithmic and the nonlinear surface diffusion terms are treated implicitly, while the concave expansive linear terms are explicitly updated. A mass lumped finite element spatial approximation is applied, to ensure the positivity of the phase variables. In turn, a positivity-preserving property can be theoretically justified for the proposed fully discrete numerical scheme. In addition, unconditional energy stability is established as well, which comes from the convexity analysis. Several numerical simulations are carried out to verify the accuracy and positivity-preserving property of the proposed scheme.
\end{abstract}

\medskip

\noindent
{\bf AMS subject classifications:} 35K25, 35K55, 60F10, 65M60
\medskip

\noindent
{\bf Keywords:} MMC-TDGL equations, mass lumped FEM, convex-concave decomposition, energy stability, positivity preserving

\medskip

%{\bf \noindent
%\date{}}

%deGennes1980, Flory1953, Gong2019, LiuC2021, Yang2019, ZhangJ2021

	\section{Introduction}
A hydrogel is a network of cross-linked hydrophilic polymers chains. They absorb water and can swell to many times their original size. Hydrogels, which can act as solids or liquids in various settings, are versatile materials that have led to extensive industrial and biomedical applications~\cite{drury_hydrogels_2003, edlund_barrier_2010, johnson_hydrogels_2010}. Macromolecular microsphere composite (MMC) hydrogel, which was originally synthesized by Huang et al.~in 2007,  possesses a unique well-defined network structure and very high mechanical strength. This is due to the highly specialized chemical grafting of the entangled polymer chains, in comparison with traditional hydrogels \cite{huang_novel_2007}. MMC hydrogels have been widely applied in both biomedical and industrial areas, such as in drug delivery~\cite{xiao_monodispersed_2005}, artificial tissues~\cite{Wang_Advances_2008,curk_rational_2016}, et cetera. The formation process of MMC hydrogel has been described in detail in \cite{He_Nanoparticles_2011, huang_novel_2007, zhai_investigation_2013}.

Computational and experimental studies are needed to reveal the complicated properties of MMC hydrogels. Studies must include the investigation of the parameter space related to their production and processing, in order to engineer their individual effects. Furthermore, must be explored and refined to validate their predictions. For example, Zhai et al.~\cite{zhai_investigation_2013} developed a reticular free energy for MMCs, under certain assumptions, most particularly, that the number of graft chains around a macromolecular microsphere (MMS) is proportional to the perimeter. Based on the time-dependent Ginzburg-Landau (TDGL) mesoscale simulation method, a two-component model, appropriately named the MMC-TDGL equation, was developed to understand the time evolution of MMC hydrogel structure in \cite{zhai_investigation_2013}. This continuum scale model was designed to simulate phase transitions in MMC hydrogels. Li et al.~\cite{li_phase_2015} added a stochastic term in the binary MMC-TDGL equation to consider how random physical fluctuations modify the dynamics. Recently, the reticular free energy was reconstructed in~\cite{ji2021modeling}, and shown to be consistent with the network structures of the MMC hydrogels. Based on the Boltzmann entropy theorem, the Flory-Huggins lattice theory and assuming TDGL dynamics,  a three-component MMC-TDGL model can be constructed. The MMC and polymer chains are no longer considered as a whole in this model, making it more consistent with experiments.

It is widely known that phase-field models satisfy certain properties, such as energy decay, mass conservation, and positivity preservation. These properties represent important physical features and are also essential for mathematical analysis and consistent numerical simulation. During the past several decades, there have been many works devoted to designing various kinds of numerical methods to satisfy these properties, especially for Allen-Cahn and Cahn-Hilliard-type equations. See, for example,~\cite{copetti_numerical_1992,wise_energy-stable_2009,cherfils_cahn-hilliard_2011,yang_linear_2016,yang_numerical_2017} and the references therein. Zhai et al.~\cite{zhai_investigation_2013} constructed a spectral-type numerical method to approximate the solutions of the binary MMC-TDGL equations. Li et al.~used a semi-implicit scheme for binary MMC-TDGL equation in \cite{li_phase_2015}, while there was no discussion of any stability condition. Subsequently, a convex splitting method was presented in~\cite{li_unconditionally_2016}, and energy stability was proven for the numerical solution of the phase variable. Liao et al.~applied an adaptive time step strategy to improve computational efficiency in \cite{liao_energy_2017} and Dong et al.~\cite{dong2019, dong2020a} presented the theoretical analysis for the first and second-order energy stable schemes. A stabilized method was also used to solve the binary system by Xu et al.~in \cite{xu_efficient_2019}, though a theoretical justification of the stabilizing parameters (for energy decay) has not been established. Other related works could be found in~\cite{Gong2019, Yang2019}, etc.   

Although there have been many works on the multi-component Cahn-Hilliard flow~\cite{boyer_study_2006, boyer_numerical_2011, ChenTernaryCH, yang_numerical_2017}, addressing polynomial-type energy potentials, the numerical study of ternary MMC-TDGL equations is still in the preliminary stages. First, it has always been a key difficulty to design a numerical scheme satisfying the physical properties. Furthermore, it is highly challenging to prove the positivity-preserving property for the logarithmic terms, since the fourth-order partial differential equations fail to satisfy a maximum principle. In \cite{copetti_numerical_1992}, a finite element scheme was proposed based on the backward Euler approximation for the Cahn-Hilliard equation with logarithmic free energy, and the positivity-preserving property of the numerical solution was proven under a constraint on the time step. In a more recent work \cite{chen19a}, the authors presented a finite difference scheme based on the convex-concave decomposition of the free energy with logarithmic potential and established a theoretical justification of the positivity-preserving property, regardless of time step size. This improvement is based on the following fact: \textit{the singular nature of the logarithmic term around the pure-phase values prevents the numerical solution from reaching these singular values, so that the numerical scheme is always well-defined as long as the numerical solution stays similarly bounded at the previous time step}. Moreover, similar ideas have been applied in \cite{dong2019, dong2020a} to analyze the binary MMC-TDGL equation. Also see the related works of other gradient models with singular energy potential, such as the Poisson-Nernst-Planck system~\cite{LiuC2021a, Qian2021}, the reaction-diffusion system in the energetic variational formulation~\cite{LiuC2021b}, liquid film droplet model~\cite{ZhangJ2021}, etc.  

In this article, we aim to analyze the ternary MMC-TDGL system and obtain the theoretical justification of both the positivity-preserving property and the energy stability. To this end, the key ingredient is an application of the convex-concave decomposition of the physical energy, with respect to the multi-phase variables. In fact, the convex splitting method has been extensively applied to a variety of gradient flow models~\cite{baskaran13a, baskaran13b, chen12, chen16, diegel16, diegel17, fengW18a, gu_energy-stable_2014, guan14b, guan14a, guo16, liuY17, shen12, wang10a, wang11a, wise10, wise_energy-stable_2009, yan18}, for both first and second-order temporally accurate versions.  Meanwhile, most of these existing works have focused on polynomial free energy potentials. The extension to singular Flory-Huggins-type energy potentials turns out to be highly challenging. In addition, the appearance of the highly nonlinear and singular deGennes gradient energy terms makes the whole system even more difficult. To overcome these subtle difficulties, we make use of a convex-concave decomposition of the physical energy in the ternary MMC-TDGL system, reported in a recent work~\cite{Dong2020b}.

In more details, the logarithmic terms and the highly nonlinear gradient energy terms are placed in the convex part, while the expansive terms are put in the concave part, based on careful convexity analyses. In turn, the convex splitting approach leads to a uniquely solvable, positivity-preserving and energy stable numerical scheme. The finite difference approximation was reported in~\cite{Dong2020b}, and its direct application to the finite element method is not available, due to the difficulty to ensure the point-wise positivity of the numerical solution in the standard FEM method. In our work, a mixed FEM method is applied to the ternary MMC-TDGL system to facilitate the numerical implementation of the fourth order parabolic equations. It is well-known that the standard conforming FEM fails to satisfy the discrete maximum principle due to the non-diagonal mass matrix. As a result, a lumped mass FEM was chosen instead, so as to diagonalize the mass matrix. The diagonal elements are the row sums of the original mass matrix~\cite{thomee_galerkin_2006}. In comparison with the finite difference method, the FEM allows for flexible, adaptive meshes and is often easier to analyze.

%has the rigid theoretical framework, so that it is easier to obtain the theoretical analysis and verify the positivity-preserving property of the numerical method for the ternary MMC-TDGL equations.

This paper is organized as follows. In Section~\ref{sec:model}, we briefly review the mathematical model of  three-component phase transitions in  MMC hydrogels. In Section~\ref{sec:numerical scheme}, we present the numerical scheme using the mass lumped finite element method. The detailed proof for the positivity-preserving property of the numerical solution is provided in Section~\ref{sec:positivity}, and the energy stability analysis is established in Section~\ref{sec:energy stability}. In Section~\ref{sec:numerical results}, the numerical simulations are presented to verify the theoretical results. Finally, some concluding remarks are given in Section~\ref{sec:conclusion}.

\section{Three-component MMC-TDGL system} \label{sec:model}

Given an open bounded, connected domain $\Omega \subset \R ^2$ with a Lipschitz smooth boundary $\partial\Omega$, we recall the derivation of the diffuse interface describing the phase transitions of MMC hydrogels. % shown in \cite{ji2021modeling}.
It is worth mentioning that the ternary system is made of water, macromolecular microsphere, and polymer chain. Usually, the composition of the mixture is described at each point by the  concentration value of %an order parameter which is typically the concentration of
one of the constituents in the mixture. Thus we denote the concentration of the macromolecular microsphere in the ternary system by the order parameter $\phi_{1}$, the polymer chain by $\phi_{2}$ and the solvent molecules by $\phi_{3}$. The value of the three order parameters are located between $0$ and $1$, where three phases vary rapidly but smoothly across the interface. And also, these three unknowns are linked through the hyperplain link relationship $\phi_{1}+\phi_{2}+\phi_{3}=1$. Due to the mass conservation constraint, we denote $\phi_3 = 1 - \phi_1 - \phi_2$ throughout the rest of this article, for simplicity of presentation.

%In~\cite{ji2021modeling}, the authors reconstructed
The Flory-Huggins reticular free energy takes a form of $f(\phi_1,\phi_2)$. Moreover, the evolution of the system is driven by the minimization of a free energy under the constraint of mass conservation of each phase. The Ginzburg-Landau type energy functional $F(\phi_{1},\phi_{2})$ takes the following form
\begin{equation}\label{ginzburg-landau}
\begin{aligned}
 F \left( \phi_1 , \phi_2  \right)  =
\int_\Omega  f(\phi_1,\phi_2) +  K(\phi_1,\phi_2) d \mathbf {x},
\end{aligned}
\end{equation}
where %$k_B$ is the Boltzmann constant, $\theta$ is the temperature, 
$f(\phi_1,\phi_2)$ contains the mixing entropy $S(\phi_{1},\phi_{2})$ and the mixing enthalpy $H(\phi_{1},\phi_{2})$, i.e., $f(\phi_1,\phi_2)= S(\phi_1,\phi_2) + H(\phi_1,\phi_2)$. The expression of $S(\phi_1,\phi_2), H(\phi_1,\phi_2)$, as well as $K(\phi_{1},\phi_{2})$, can be written as follows
\begin{flalign}\label{SHK}
S(\phi_1,\phi_2) &=\frac { \phi_1 } { \gamma } \ln \left( \frac { \alpha \phi_1 } { \gamma } \right) + \frac { \phi_2 } { N } \ln \left( \frac { \beta \phi_2 } { N } \right)
+ \left( 1 - \phi_1 - \phi_2 \right) \ln \left( 1 - \phi _ {1 } - \phi_2 \right),
\nonumber\\
H(\phi_1,\phi_2) &=\chi_{12} \phi_1 \phi_2 + \chi_{13} \phi_1 \left( 1 - \phi_1 - \phi_2 \right) + \chi_{23} \phi_2 \left( 1 - \phi_1 - \phi_2 \right),\\
K(\phi_1,\phi_2) &=\frac { a_1^2 } { 36 \phi_1 } | \nabla \phi_1 |^2
+ \frac { a_2^2 } { 36 \phi_2 } | \nabla \phi_2 |^2
+ \frac { a_3^2 } { 36 \left( 1 - \phi_1 - \phi_2 \right) } | \nabla \left( 1 - \phi_1 - \phi_2 \right) |^2,\nonumber
\end{flalign}
in which the parameter $\gamma$ is the relative volume of one macromolecular microsphere, $N$ is the degree of polymerization of the polymer chains. The parameters $\alpha$ and $\beta$ are determined by the formulas $\alpha=\pi(\sqrt{\gamma/\pi}+N/2)^2, \beta=\alpha/\sqrt{\pi N}$, dependent on $\gamma$ and $N$; see more detailed derivations of the model in~\cite{zhai_investigation_2013}. In fact, the Flory-Huggins energy density takes a form of $\phi_i \ln \phi_i$ for each species concentration, combined with the interaction energy density $\phi_i \phi_j$~\cite{Flory1953}. The constants $\chi_{12}, \chi_{13}$, and $\chi_{23}$ are the Flory-Huggins interaction parameters between macromolecular microspheres and polymer chain, macromolecular microspheres and solvent, and polymer chain and solvent, respectively. In addition, the deGennes diffusive coefficient, $\kappa (\phi_i) = \frac{a_i^2}{36 \phi_i}$, depends on the corresponding phase variables. This diffusion process was proposed by physicist P.G. deGennes~\cite{deGennes1980} for the binary Cahn-Hilliard flow, in which the phase variables could be simplified as $\phi_1 = \phi$, $\phi_2 = 1 - \phi$, so that the combined diffusion coefficient and surface diffusion energy density becomes   
$$
\begin{array}{l} 
  \displaystyle \vspace{.05in}   
  \kappa (\phi) = \frac{a^2}{36 \phi_1} + \frac{a^2}{36 \phi_2 }  
  = \frac{a^2}{36 \phi} + \frac{a^2}{36( 1 - \phi ) }  
  = \frac{a^2}{36 \phi (1 - \phi)} , \quad | \nabla \phi_1 | = | \nabla \phi_2 | = | \nabla \phi | ,  \\
  \displaystyle \vspace{.05in}  
  K (\phi) = \frac{a^2}{36 \phi_1} | \nabla \phi_1 |^2 +  \frac{a^2}{36 \phi_2} | \nabla \phi_2 |^2 
  = \kappa (\phi) | \nabla \phi |^2 = \frac{a^2}{36 \phi (1 - \phi)} | \nabla \phi |^2 . 
\end{array}    
$$
An extension to the ternary gradient flow is natural. Such a nonlinear diffusive coefficient has been an essential difficulty for the MMC-TDGL model; see the related analysis in~\cite{dong2019, dong2020a}. Here $a_{i}$ is the statistical segment length of the  $i^{\rm th}$ component, $i=1, 2, 3$. By a simple computation, the variational derivatives of the free energy function $F ( \phi_1 ( \mathbf { x } , t ) , \phi_2 ( \mathbf { x } , t ))$ with respect to $\phi_{1}$ and $\phi_{2}$ are found to be
\begin{flalign}
\frac { \delta F \left( \phi_1 , \phi_2 \right) } { \delta \phi_1 } &= \frac {\partial S \left( \phi_1 , \phi_2 \right) } { \partial \phi_1 } - \frac { a_1^2 | \nabla \phi_1 |^2 } { 36 \phi_1^2 } -\nabla \cdot\left( \frac { a_1^2 \nabla  \phi_1 } { 18 \phi_1 }\right)+ \frac { a_3^2 | \nabla \left( 1 - \phi_1 - \phi_2 \right) |^2 } { 36 \left( 1 - \phi_1 - \phi_2 \right)^2 }
\label{DF}\\
\nonumber& \quad + \nabla \cdot\left( \frac { a_3^2 \nabla \left( 1 - \phi_1 - \phi_2 \right) } { 18 \left( 1 - \phi_1 - \phi_2 \right) }\right)
- \frac {\partial H \left( \phi_1 , \phi_2 \right) } { \partial \phi_1 } ,\\
\frac { \delta F \left(\phi_1 , \phi_2 \right)  } { \delta \phi_2 } &=\frac {\partial S \left( \phi_1 , \phi_2 \right) } { \partial \phi_2 } - \frac { a_2^2 | \nabla \phi_2 |^2 } { 36 \phi_2^2 } -\nabla \cdot\left(  \frac { a_2^2 \nabla  \phi_2 } { 18 \phi_2 }\right)+ \frac { a_3^2 | \nabla \left( 1 - \phi_1 - \phi_2 \right) |^2 } { 36 \left( 1 - \phi_1 - \phi_2 \right)^2 }
\label{DF2}\\
\nonumber& \quad + \nabla \cdot\left( \frac { a_3^2 \nabla \left( 1 - \phi_1 - \phi_2 \right) } { 18 \left( 1 - \phi_1 - \phi_2 \right) }\right)-\frac {\partial H \left( \phi_1 , \phi_2 \right) } { \partial \phi_2 }, 
\end{flalign}
where
	\begin{flalign}
\frac {\partial S } { \partial \phi_1 } &= \frac { 1 } { \gamma }\ln \left( \frac { \alpha \phi_1 } { \gamma } \right)
+ \frac { 1 } { \gamma } - 1 - \ln \left( 1 - \phi_1 - \phi_2 \right),\quad \frac {\partial S  } { \partial \phi_2 } = \frac { 1 } { N }\ln \left( \frac { \beta \phi_2 } { N } \right)
+ \frac { 1 } { N } - 1 - \ln \left( 1 - \phi_1 - \phi_2 \right),\nonumber\\
\frac {\partial H  } { \partial \phi_1 } &=
- 2 \chi _ { 13 } \phi_1
+ \left( \chi_{12} - \chi _ { 13 } - \chi _ { 23 } \right) \phi_2
+ \chi _ { 13 },\quad
\frac {\partial H } { \partial \phi_2 } =
- 2 \chi _ { 23 } \phi_2 + \left( \chi_{12} - \chi _ { 13 } - \chi _ { 23 } \right) \phi_1 + \chi _ { 23 }. \nonumber
\end{flalign}

To simulate the traditional hydrogels, the time-dependent Ginzburg-Landau (TDGL) mesoscopic model is widely used to describe the phase transitions of a multi-component polymer blend. Once this energy $F$ is defined, we can formulate the time evolution of the three-component MMC hydrogels system for the conserved Cahn-Hilliard equations:
%\begin{align}
%\frac { \pa \phi_1} { \pa t }
%&= D _ { 1 } \nabla^2 \frac { \delta F \left( \phi_1 , \phi_2 \right) } { \delta \phi_1 }
%- 2 k_B \theta \nabla^2 \eta_1,\label{source1}
%\\
%\frac { \pa \phi_2 } { \pa t } &= D _ { 2 } \nabla^2 \frac { \delta F \left(\phi_1  , \phi_2  \right) } { \delta \phi_2}
%- 2 k_B \theta  \nabla^2 \eta_2, \label{source2}
%\end{align}
%where $\eta _ { i }$ satisfies $\left\langle \eta _ { i } ( \mathbf { r } , t ) , \eta _ { i } \left( \mathbf { r } ^ { \prime } , t ^ { \prime } \right) \right\rangle = \nabla^2 \delta \left( \mathbf { r } - \mathbf { r } ^ { \prime } \right) \delta \left( t - t ^ { \prime } \right)$, which is a random noise that satisfies the fluctuation-dissipation theorem. \textcolor{red}{For convenience, we only consider the non-stochastic trinary MMC-TDGL equations followed by}
	\begin{flalign}
	\frac { \pa \phi_1} { \pa t }
	&= D _ { 1 } \Delta \frac { \delta F \left( \phi_1 , \phi_2 \right) } { \delta \phi_1 },\label{MMC-equations}
	\\
	\frac { \pa \phi_2 } { \pa t } &= D _ { 2 } \Delta \frac { \delta F \left(\phi_1  , \phi_2  \right) } { \delta \phi_2},\label{MMC-equations2}
	\end{flalign}
where $D_i= k_B \theta M_i$ are the diffusion coefficients, $k_B$ is the Boltzmann constant, $\theta$ is the temperature, and $M_i > 0$ stand for the mobility of the $i^{\rm th}$ component, $i=1, 2$. For simplicity, we select $\Omega=(0,L)^2$, and consider $L$-periodic boundary condition for this model. However, the finite element method can be extended to a wider class of regions and Neumann boundary conditions could also be used.

The Cahn-Hilliard system has the important feature that the phase variables, $\phi_{1}$ and $\phi_{2}$, are mass-conservative. Integrating \eqref{MMC-equations} and \eqref{MMC-equations2} over $\Omega=(0 , L)^2$, we obtain
\begin{equation}
\begin{aligned}
\frac{d}{dt}\int_{\Omega}\phi_{1}d\mathbf {x} = & \int_{\Omega}\frac{\pa\phi_{1}}{\pa t}d\mathbf {x}=D_{1}\int_{\pa \Omega}\nabla \frac{\delta F}{\delta\phi_{1}}\cdot \boldsymbol{n}ds,\\
\frac{d}{dt}\int_{\Omega}\phi_{2}d\mathbf {x} = & \int_{\Omega}\frac{\pa\phi_{2}}{\pa t}d\mathbf {x}=D_{2}\int_{\pa \Omega}\nabla \frac{\delta F}{\delta\phi_{2}}\cdot \boldsymbol{n}ds .
\end{aligned}
\end{equation}
Notice that $\delta F/\delta \phi_{1}$ and $\delta F/\delta\phi_{2}$ in \eqref{DF} and \eqref{DF2} are both $L$-periodic with respect to $x$ and $y$, 	so that integration on the boundary vanishes, which implies
\begin{equation}
\begin{aligned}
\int_{\Omega}\phi_{1}(\mathbf {x},t)d\mathbf {x} =\int_{\Omega}\phi_{1}(\mathbf {x},0)d\mathbf {x}, \quad %\quad \forall t>0,\\
\int_{\Omega}\phi_{2}(\mathbf {x},t)d\mathbf {x} =\int_{\Omega}\phi_{2}(\mathbf {x},0)d\mathbf {x}, \quad \forall t>0 . 
\end{aligned}
\end{equation}
As a consequence, $\phi_{3}=1-\phi_{1}-\phi_{2}$  also satisfies the mass-conversation property. 

Meanwhile, the most distinguished difficulty for the Cahn-Hilliard equation with logarithmic Flory Huggins energy potential and deGennes diffusive coefficients is associated with the singularity as the value of $\phi$ approaches the limit value $0$. In fact, for the binary Cahn-Hilliard flow, the positivity property,  i.e., $0 < \phi_1, \phi_2$, has been established at the PDE analysis level in~\cite{abels07, debussche95, elliott96b, miranville04}. As a further development, the phase separation has also been justified for the 1-D and 2-D equations at a theoretical level, i.e., a uniform distance between the phase variable and the singular limit values has been proved, and such a distance only depends on the surface diffusion coefficient and expansive parameter, as well as the initial data. For the ternary MMC TDGL model, a similar positivity estimate is expected to be valid for the exact PDE solution, i.e., $0 < \phi_i$, ($1 \le i \le 3$), and a uniform separation property is also expected to be valid for 2-D flow; more technical details have to be involved for this model.    

In terms of the energy stability, by multiplying~\eqref{MMC-equations} with $\delta F/\delta\phi_{1}$ and \eqref{MMC-equations2} with $\delta F/\delta \phi_{2}$, respectively, and integrating it over $\Omega$,  using Green's formula and the periodic boundary conditions, one obtains
\begin{equation}
\begin{aligned}
\frac{d F}{dt}&=\int_{\Omega}\frac{\delta F}{\delta \phi_{1}}\frac{\pa \phi_{1}}{\pa t} d\mathbf {x} +\int_{\Omega}\frac{\delta F}{\delta \phi_{2}}\frac{\pa \phi_{2}}{\pa t} d\mathbf {x}\\
&=-D_{1}\int_{\Omega}|\nabla \frac{\delta F}{\delta \phi_{1}}|^2d\mathbf {x}-D_{2}\int_{\Omega}|\nabla \frac{\delta F}{\delta \phi_{2}}|^2d\mathbf {x} \leq 0,
\end{aligned}
\end{equation}
which indicates that the energy $F(\phi_{1},\phi_{2})$ is a decreasing function of time.

\section{The fully discrete finite element scheme} \label{sec:numerical scheme}

The standard notation for the norms is used, in their respective function spaces. In particular, we denote the standard norms for the Sobolev spaces $ W^{m,p}(\Omega)$ by $\|\cdot\|_{m,p}$, and repleace $\|\cdot\|_{0,p}$ by $\|\cdot\|_p$, $\|\cdot\|_{0,2}=\|\cdot\|_2$ by $\|\cdot\|$, and $\|\cdot\|_{q,2}$ by $\|\cdot\|_{H^q}$. Let $C^{\infty}_{per}(\Omega)$ be the set of all restrictions onto $\Omega$ of all real-valued, $L$-periodic, $C^{\infty}(\Omega)$-functions on $\mathbb{R}^2$. For each integer $q\geq0$, let $H^q_{per}(\Omega)$ be the closure of $C^{\infty}_{per}(\Omega)$ in the usual Sobolev norm $\|\cdot\|_q$, and $H^{-q}_{per} (\Omega)$ be the dual space of $H^{q}_{per}(\Omega)$. Note that $H^0_{per}(\Omega) = L^2(\Omega)$. %, and denote $(\cdot,\cdot)_{D}$ the $L^2$ inner-product on domain $D$.
In turn, by introducing $\mu_{1}=\frac{\delta F}{\delta \phi_{1}}=\delta_{\phi_{1}} F$ and $\mu_{2}=\frac{\delta F}{\delta \phi_{2}}=\delta_{\phi_{2}} F$, the mixed weak formulation of MMC-TDGL equations \eqref{MMC-equations} becomes: find $\phi_1,\mu_1,\phi_2,\mu_2\in L^2(0,T;H_{per}^1(\Omega))$, with $\partial_t\phi_1$, $ \partial_t\phi_2\in L^2(0,T;H_{per}^{-1}(\Omega))$, satisfying
\begin{equation}\label{weak-formula}
	\left\{
	\begin{aligned}
	{\left(\partial_t\phi_1, v_1\right)}&{+(D_1\nabla \mu_1, \nabla v_1)=0,} & {\forall v_1 \in H^{1}_{per}(\Omega)}, \\
	{(\mu_1, w_1)}&{=\left(\delta_{\phi_{1}} F\left(\phi_{1}, \phi_{2}\right), w_{1}\right),} & {\forall w_1 \in H^{1}_{per}(\Omega)},\\
	{\left(\partial_t\phi_2, v_2\right)}&{+(D_2\nabla \mu_2, \nabla v_2)=0,} & {\forall v_2 \in H^{1}_{per}(\Omega)},\\
	{(\mu_2, w_2)}&{=\left(\delta_{\phi_{2}} F\left(\phi_{1}, \phi_{2}\right), w_{2}\right),} & {\forall w_2 \in H^{1}_{per}(\Omega)},
	\end{aligned}
	\right.
\end{equation}
for any $t \in [0,T]$, where $(\cdot,\cdot)$ represents the $L^2$ inner product or the duality pairing, as appropriate.

\subsection{The finite element scheme}
The following preliminary results are associated with the existence of the convex-concave decomposition of the energy functional $F$, i.e, $F(\phi_1 , \phi_2)$ admits a (not necessarily unique) splitting into purely convex and concave energies, $F=F_c-F_e$,
where $F_c=\int_{\Omega} S \left( \phi_1  , \phi_2  \right) + K \left( \phi_1  , \phi_2 \right) d \mathbf { x }$ and $F_e=-\int_\Omega H \left( \phi_1  , \phi_2  \right) d \mathbf { x }$ are convex with respect to the specific variables.

\begin{prop} \label{Proposition1}  \cite{Dong2020b}
	Define the functions
\begin{equation*}
\begin{array}{rlrc}
T_{1}(u, v)&:=\frac{v^{2}}{36 u}, \quad u \in(0, \infty), & v &\in \R ; \\
T_{2}\left(u_{1}, u_{2}, v_{1}, v_{2}\right)&:=\frac{\left(v_{1}+v_{2}\right)^{2}}{36\left(1-u_{1}-u_{2}\right)}, & u_{1}, u_{2}, v_{1}, v_{2} &\in \R ; \\
T_{3}(u, v, w)&:=\frac{w^{2}}{36(u+v)}, & u, v, w &\in \R ;\\
T_{4}(u_{1},u_{2},u_{3},v)&:=\frac{3v^{2}}{(u_{1}+u_{2}+u_{3})} & u_{1}, u_{2}, u_{3}, v &\in \R .
\end{array}
\end{equation*}
Then,
\begin{enumerate}
	\item
	$T_{1}(u, v)$ is convex in $(0,+\infty) \times \R $.
	\item
	$T_{2}\left(u_{1}, u_{2}, v_{1}, v_{2}\right)$ is convex in $\R ^{4},$ provided $u_{1}+u_{2}<1$.
	\item
	$T_{3}(u, v, w)$ is convex in $\R ^{3},$ provided $u+v>0$.
	\item
	$T_{4}(u_{1},u_{2},u_{3},v)$ is convex in $\R ^{4}$, provided $u_{1}+u_{2}+u_{3} > 0$.
	\item 
	$S\left(u_{1}, u_{2}\right)$ is convex in the Gibbs triangle  $\mathcal{G},$ defined as
	\[
	\mathcal{G}:=\left\{\left(u_{1}, u_{2}\right) \mid u_{1}, u_{2}>0, u_{1}+u_{2}<1\right\}.
	\]
	\item
	$H\left(u_{1}, u_{2}\right)$ is concave, provided that $4 \chi_{13} \chi_{23}-\left(\chi_{12}-\chi_{13}-\chi_{23}\right)^{2}>0$.
\end{enumerate}
\end{prop}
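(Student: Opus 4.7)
The plan is to reduce every claim to a direct Hessian computation, exploiting the fact that pre-composition with an affine map preserves convexity. The cornerstone is item (1): I would compute the Hessian of $T_1(u,v)=v^2/(36u)$ explicitly and observe that
\begin{equation*}
D^2 T_1(u,v) = \frac{1}{18}\begin{pmatrix} v^2/u^3 & -v/u^2 \\ -v/u^2 & 1/u\end{pmatrix}
\end{equation*}
has non-negative trace $(v^2/u^3+1/u)/18$ and determinant identically equal to $0$ on $(0,\infty)\times\R$, hence is positive semidefinite. Items (2)--(4) then require no further differentiation, since each of $T_2,T_3,T_4$ can be written as $T_1$ composed with an affine map (up to a positive multiplicative constant): $T_2(u_1,u_2,v_1,v_2)=T_1(1-u_1-u_2,\,v_1+v_2)$ on $\{u_1+u_2<1\}$, $T_3(u,v,w)=T_1(u+v,\,w)$ on $\{u+v>0\}$, and $T_4(u_1,u_2,u_3,v)=108\,T_1(u_1+u_2+u_3,\,v)$ on $\{u_1+u_2+u_3>0\}$. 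Since convexity is preserved under both affine pre-composition and multiplication by a positive constant, the three claims follow on the indicated regions.

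For item (5), my approach is to compute the $2\times 2$ Hessian of $S$ on the Gibbs triangle $\mathcal{G}$ directly. The contribution of $(1-u_1-u_2)\ln(1-u_1-u_2)$ couples both variables, giving
\begin{equation*}
D^2 S = \begin{pmatrix} \dfrac{1}{\gamma u_1}+\dfrac{1}{1-u_1-u_2} & \dfrac{1}{1-u_1-u_2} \\[4pt] \dfrac{1}{1-u_1-u_2} & \dfrac{1}{N u_2}+\dfrac{1}{1-u_1-u_2}\end{pmatrix}.
\end{equation*}
The $(1,1)$-entry is strictly positive on $\mathcal{G}$, and the crucial cancellation of the $1/(1-u_1-u_2)^2$ terms in the determinant yields
\begin{equation*}
\det D^2 S = \frac{1}{\gamma N u_1 u_2}+\frac{1}{\gamma u_1(1-u_1-u_2)}+\frac{1}{N u_2(1-u_1-u_2)} > 0,
\end{equation*}
so Sylvester's criterion gives strict convexity on $\mathcal{G}$.

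Finally, item (6) is a constant-coefficient Hessian check. The Hessian of $H$ equals
\begin{equation*}
D^2 H = \begin{pmatrix} -2\chi_{13} & \chi_{12}-\chi_{13}-\chi_{23} \\ \chi_{12}-\chi_{13}-\chi_{23} & -2\chi_{23}\end{pmatrix},
\end{equation*}
and concavity amounts to $-D^2 H$ being positive semidefinite. Its determinant is exactly $4\chi_{13}\chi_{23}-(\chi_{12}-\chi_{13}-\chi_{23})^2$, positive by hypothesis and also forcing $\chi_{13}\chi_{23}>0$; combined with the physical assumption that both interaction parameters are non-negative (which makes the $(1,1)$-entry of $-D^2 H$ non-negative), Sylvester's criterion closes the argument. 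The step I expect to be most delicate is item (5): the cancellation in $\det D^2 S$ is what reduces a potentially messy coupled-logarithm calculation to three manifestly positive terms, and once that cancellation is spotted the remainder of the proposition is mechanical.
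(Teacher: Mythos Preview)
Your proposal is correct. The paper does not supply its own proof of this proposition; it is quoted from \cite{Dong2020b} and used as a black box, so there is nothing in the present paper to compare your argument against. Your Hessian computation for $T_1$ is right (trace positive, determinant identically zero on $(0,\infty)\times\R$), and the affine pre-composition trick cleanly dispatches items (2)--(4) without further calculation, which is the most economical route. The Hessian of $S$ and the cancellation in $\det D^2 S$ are exactly as you wrote.

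One small remark on item (6): as you observed, the stated hypothesis $4\chi_{13}\chi_{23}-(\chi_{12}-\chi_{13}-\chi_{23})^2>0$ by itself only forces $D^2H$ to be \emph{definite}, not necessarily negative definite; if $\chi_{13},\chi_{23}$ were both negative, $H$ would in fact be convex. Your appeal to the physical sign convention $\chi_{13},\chi_{23}\ge 0$ (which the paper uses throughout, cf.\ Table~\ref{tab_coef}) is precisely what is needed to close the gap, and you were right to flag it explicitly rather than gloss over it.
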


We consider a finite element method for solving \eqref{weak-formula}. Let $\mathcal{T}_h$ be a shape-regular triangulation of $\Omega$ with mesh size $h$, denote $h_e$ the diameter of each triangle $e\in {\mathcal T}_h$ and $\triangle_e$ the area of $e$. Noticing that the element is shape regular, we can assume that $\frac{h_e^2}{\triangle_e}$ is uniformly bounded by one
constant $C_{\mathcal T}$: $\frac{h_e^2}{\triangle_e}\le C_{\mathcal T}$.
 Based on the quasi-uniform triangulated mesh $\mathcal{T}_h$, the finite element space is defined as
$$
S_h:=\{v \in H_{per}^{1}(\Omega)\mid ~v~\text{is piecewise linear on each}~e\in \mathcal{ T }_h\} = \text{span}\{\chi_{j}\mid j= 1,\cdots, N_{p}\} ,
$$
where $\chi_{j}$ is the common nodal basis function which is 1 at the node $P_{j}$ and 0 at all other nodes. Define $\mathring{S}_h:=S_h \cap L^2_0(\Omega)$, with $L^2_{0}(\Omega)=\{ v \in L^2(\Omega)\mid (v,1)=0\}$, the function space with zero mean in $L^2(\Omega)$ .
\begin{defin}
The discrete energy $E: S_{ h }\times S_{ h } \rightarrow \R $ is defined as
	$$E( \phi_{1}, \phi_{2} ) = \int_{{\Omega}} S \left( \phi_1  , \phi_2  \right) + H ( \phi_1  , \phi_2 ) + K \left( \phi_1  , \phi_2 \right) d \mathbf {x}.$$
\end{defin}
\begin{lem}\label{lem2}
	Suppose that $\Omega=(0,L)^2$ and $\phi_1, \phi_2 \in S_{ h }$ are periodic. Define the discrete energies as follows
	\begin{equation}
	\begin{aligned}
	E_c=\int_{\Omega} S \left( \phi_1  , \phi_2  \right) + K \left( \phi_1  , \phi_2 \right) d \mathbf {x},\quad
	E_e=-\int_{\Omega} H \left( \phi_1  , \phi_2  \right) d \mathbf {x} ,
	\end{aligned}
	\end{equation}
	where $S \left( \cdot  , \cdot  \right), H \left( \cdot  , \cdot  \right)$ and $K \left( \cdot , \cdot  \right)$ are defined by \eqref{SHK}. Then, both $E_c(\phi_1,\phi_2)$ and $E_e(\phi_1, \phi_2)$ are convex. 
\end{lem}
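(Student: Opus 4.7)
The plan is to reduce the claimed convexity of $E_c$ and $E_e$, viewed as functionals on $S_h \times S_h$ (or on the natural open subset where the logarithms and reciprocals are defined), to the pointwise convexity of their integrands, and then to recognize those integrands as specializations of the building blocks $T_1, T_2, S, H$ listed in Proposition~3.1. The basic template is standard: if $G:\R^N\to\R$ is convex and $L:S_h\times S_h\to\R^N$ is linear (in our case, a combination of evaluation and gradient evaluation at a fixed point), then $G\circ L$ is convex, and convexity is preserved under nonnegative linear combination and integration over $\Omega$.

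For $E_e=-\int_\Omega H(\phi_1,\phi_2)\,d\mathbf{x}$, this is immediate. Under the parameter assumption $4\chi_{13}\chi_{23}-(\chi_{12}-\chi_{13}-\chi_{23})^2>0$ of Proposition~3.1(6), the polynomial $-H(u_1,u_2)$ is convex on $\R^2$. Composing with the pointwise evaluation map $(\phi_1,\phi_2)\mapsto(\phi_1(x),\phi_2(x))$, which is linear on $S_h\times S_h$, and integrating over $\Omega$ gives convexity of $E_e$.

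For $E_c=\int_\Omega S(\phi_1,\phi_2)\,d\mathbf{x}+\int_\Omega K(\phi_1,\phi_2)\,d\mathbf{x}$, I would treat the entropy and gradient parts separately. The entropy part is handled by Proposition~3.1(5), which gives convexity of $S(u_1,u_2)$ on the Gibbs triangle $\mathcal{G}$; composing with the linear evaluation map and integrating produces a convex functional on the subset where $\phi_1,\phi_2,1-\phi_1-\phi_2$ stay positive pointwise. For $K$, I would split it as $K=K_1+K_2+K_3$ corresponding to the three surface-diffusion terms. The first two rewrite directly as
\begin{equation*}
K_i(\phi_i)=a_i^2\bigl[T_1(\phi_i,\partial_x\phi_i)+T_1(\phi_i,\partial_y\phi_i)\bigr],\qquad i=1,2,
\end{equation*}
and each $T_1$ contribution is convex on $(0,\infty)\times\R$ by Proposition~3.1(1). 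The key rewriting is for $K_3$: since $\nabla(1-\phi_1-\phi_2)=-(\nabla\phi_1+\nabla\phi_2)$,
\begin{equation*}
|\nabla(1-\phi_1-\phi_2)|^2=(\partial_x\phi_1+\partial_x\phi_2)^2+(\partial_y\phi_1+\partial_y\phi_2)^2,
\end{equation*}
so that $K_3=a_3^2\bigl[T_2(\phi_1,\phi_2,\partial_x\phi_1,\partial_x\phi_2)+T_2(\phi_1,\phi_2,\partial_y\phi_1,\partial_y\phi_2)\bigr]$, which is convex on the region $\phi_1+\phi_2<1$ by Proposition~3.1(2). Applying the composition/integration template, $\int_\Omega K\,d\mathbf{x}$ is convex on the relevant open subset of $S_h\times S_h$, and the sum with $\int_\Omega S\,d\mathbf{x}$ gives convexity of $E_c$.

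I expect no deep obstacle: the only real step beyond cross-referencing Proposition~3.1 is the algebraic identification of $K_3$ with the $T_2$ building block via the chain-rule identity above, together with the observation that for piecewise linear $\phi_1,\phi_2\in S_h$ the gradients are constant on each element, so the pointwise-to-element reduction is well defined and the elementwise integrals are nonnegative weighted sums of convex expressions in the nodal degrees of freedom.
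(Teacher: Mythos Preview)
Your proposal is correct and is essentially an explicit unpacking of the paper's own proof, which simply says that since $S_h\subset H^1_{per}(\Omega)$ the conclusion follows from Proposition~\ref{Proposition1}. Your identification of the three deGennes terms with $T_1$ and $T_2$ and of the entropy and enthalpy with items (5)--(6) is exactly the intended reduction; the paper just omits these details.
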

\begin{proof}
Since $S_{ h } \subset H_{per}^1(\Omega),$ the proof follows the analysis in Proposition~\ref{Proposition1}, and the conclusions are obvious.
\end{proof}

Then we introduce the fully-discrete scheme. Let $M$ be a positive integer and $0=t_0<t_1<\cdots<t_M= M\tau=T$ be a uniform partition of $[0,T]$, with $\tau=t_i-t_{i-1}$ and $i=1,\cdots, M.$ Due to the convex-concave decomposition $E=E_c - E_e$, the potentials could also be split into two parts, namely ${ \mu_{1} }$ and $ { \mu_{2} }$. By treating the convex term implicitly and the concave part explicitly, the first-order in time, mixed finite element scheme could be formulated as follows:
for any $0\leq n\leq M-1$, given $\phi_{1h}^n,\phi_{2h}^n \in S_{h}$, find $\phi_{1h}^{n+1}, \mu_{1h}^{n+1},\phi_{2h}^{n+1},\mu_{2h}^{n+1}\in S_{h}$ such that
\begin{subequations}\label{full-scheme}
	\begin{equation}\label{full-scheme1}
	\begin{aligned}
	\left(\frac{\phi_{1h}^{n+1}-\phi_{1h}^{n}}{\tau}, v_{1}\right)
	&=-\left(D_{1} \nabla {\mu}_{1h}^{n+1}, \nabla v_{1}\right),
	& \forall v_{1}\in \mathring{S}_h,\\
	\left(\mu_{1h}^{n+1}, w_{1}\right)
	&=\left(\delta_{\phi_{1}} E_{c}\left(\phi_{1h}^{n+1}, \phi_{2h}^{n+1}\right), w_{1}\right)
	+\left(\frac{\partial H}{\partial \phi_{1}}\left(\phi_{1h}^{n}, \phi_{2h}^{n}\right),w_{1} \right),
	& \forall w_{1}\in \mathring{S}_h,
	\end{aligned}
	\end{equation}
	\begin{equation}\label{full-scheme2}
	\begin{aligned}
	\left(\frac{\phi_{2h}^{n+1}-\phi_{2h}^{n}}{\tau}, v_{2}\right)
	&=-\left(D_{2} \nabla \mu_{2h}^{n+1}, \nabla v_{2}\right),
	& \forall v_{2}\in \mathring{S}_h,\\
	\left(\mu_{2h}^{n+1}, w_{2}\right)
	&=\left(\delta_{\phi_{2}} E_{c}\left(\phi_{1h}^{n+1}, \phi_{2h}^{n+1}\right), w_{2}\right)
	+\left(\frac{\partial H}{\partial \phi_{2}}\left(\phi_{1h}^{n}, \phi_{2h}^{n}\right),w_{2} \right),
	& \forall w_{2}\in \mathring{S}_h,
	\end{aligned}
	\end{equation}
\end{subequations}

where
\begin{equation}
\begin{aligned}
\delta_{\phi_{1}} E_{c}\left(\phi_{1h}^{n+1}, \phi_{2h}^{n+1}\right)&=\frac{\partial S}{\partial \phi_{1}}\left(\phi_{1h}^{n+1}, \phi_{2h}^{n+1}\right)+\delta_{\phi_{1}} K\left(\phi_{1h}^{n+1}, \phi_{2h}^{n+1}\right),\\
\delta_{\phi_{2}} E_{c}\left(\phi_{1h}^{n+1}, \phi_{2h}^{n+1}\right)&=\frac{\partial S}{\partial \phi_{2}}\left(\phi_{1h}^{n+1}, \phi_{2h}^{n+1}\right)+\delta_{\phi_{2}} K\left(\phi_{1h}^{n+1}, \phi_{2h}^{n+1}\right),
\end{aligned}
\end{equation}
and for $i=1,2$,
\begin{equation}
\begin{aligned}
(\delta_{\phi_{i}} K\left(\phi_{1}, \phi_{2}),w\right)=& \left(- \frac { a _ {i }^2 | \nabla \phi _ {i } |^2 } { 36 \phi _ { i }^2 }, w\right) +\left(  \frac { a _ { i}^2 \nabla  \phi _ { i } } { 18 \phi _ { i } } , \nabla w\right)\nonumber\\
&+ \left(\frac { a_3^2 | \nabla \left( 1 - \phi_1 - \phi_2 \right) |^2 } { 36 \left( 1 - \phi_1 - \phi_2 \right)^2 }, w\right)-
\left( \frac { a_3^2 \nabla \left( 1 - \phi_1 - \phi_2 \right) } { 18 \left( 1 - \phi_1 - \phi_2 \right) },\nabla w\right).
\end{aligned}
\end{equation}

\begin{defin}
	The discrete Laplacian operator $\Delta_{h}: S_{h} \rightarrow \mathring{S}_{h}$ is defined as follows: for any $v_h \in S_h, \Delta_{h}v_h \in \mathring{S}_h$ denotes the unique solution to the problem $$\left(\Delta_{h} v_{h,} \chi\right)=-\left(\nabla v_{h}, \nabla \chi\right), \quad \forall \chi \in S_{h}.$$
\end{defin}

It is straightforward to show that by restricting the domain, $\Delta_{h}: \mathring{S}_{h} \rightarrow \mathring{S}_{h}$ is invertible, and for any $v_h\in \mathring{S}_{h}$, and we have
$$
\left(\nabla\left(-\Delta_{h}\right)^{-1} v_{h}, \nabla \chi\right)= \left(v_{h}, \chi\right), \quad \forall \chi \in S_{h}.
$$

\begin{defin}
The discrete $H^{-1}$ norm $\|\cdot\|_{-1,h}$, is defined as follows:
\begin{equation}
\|v_h\|_{-1,h}:=\sqrt{(v_h,(-\Delta_{h})^{-1} v_h)}, \quad \forall v_h \in \mathring{S}_h.
\end{equation}
\end{defin}

\begin{lem}\label{lem3}
	Suppose that $\Omega = (0, L)^2$ and $\phi_1, \phi_2, \psi_{ 1 }, \psi_{ 2 } \in S_{ h }$ are periodic. Consider the convex-concave decomposition of the energy $ E(\phi_1, \phi_2)$ into $ E = E_{ c } - E_{ e }$. Then we have
	\begin{equation}
	\begin{aligned}
	E ( \phi_{1},\phi_{2}) - E(\psi_{1},\psi_{2}) &\leqslant \left(\delta_{\phi_{1}} E_{c}\left(\phi_{1}, \phi_{2}\right)-\delta_{\phi_{1}} E_{e}\left(\psi_{1},\psi_{2}\right), \phi_{1}-\psi_{1}\right)
	\\&+\left(\delta_{\phi_{2}} E_{c}\left(\phi_{1}, \phi_{2}\right)-\delta_{\phi_{2}} E_{e}\left(\psi_{1}, \psi_{2}\right),\phi_{2}-\psi_{2}\right),
	\end{aligned}
	\end{equation}
	where $\delta _ { \phi_1 } $ and $\delta _ { \phi_2 }$ denote the variational derivatives.
\end{lem}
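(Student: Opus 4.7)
The plan is to exploit exactly what Lemma 1.2 delivers: both $E_c$ and $E_e=-\int_\Omega H\,d\mathbf{x}$ are convex functionals on $S_h\times S_h$ (the latter because, under the assumption of Proposition~\ref{Proposition1}(6), $H$ is concave, so $-H$ is convex, and composition with the linear integration map preserves convexity). The desired inequality is then nothing more than the convex-concave decomposition identity $E=E_c-E_e$ combined with the standard supporting hyperplane (first-order) characterization of convexity, applied once to $E_c$ at $(\phi_1,\phi_2)$ and once to $E_e$ at $(\psi_1,\psi_2)$.

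First I would recall that for any convex, Gateaux-differentiable functional $\Phi: S_h\times S_h\to\mathbb{R}$ and any pair $(u_1,u_2),(v_1,v_2)\in S_h\times S_h$, one has the two equivalent supporting hyperplane inequalities
\begin{equation*}
\Phi(u_1,u_2)-\Phi(v_1,v_2)\ \le\ \bigl(\delta_{\phi_1}\Phi(u_1,u_2),u_1-v_1\bigr)+\bigl(\delta_{\phi_2}\Phi(u_1,u_2),u_2-v_2\bigr),
\end{equation*}
\begin{equation*}
\Phi(u_1,u_2)-\Phi(v_1,v_2)\ \ge\ \bigl(\delta_{\phi_1}\Phi(v_1,v_2),u_1-v_1\bigr)+\bigl(\delta_{\phi_2}\Phi(v_1,v_2),u_2-v_2\bigr).
\end{equation*}
These follow in the standard way by examining the convex scalar function $t\mapsto\Phi\bigl((1-t)(v_1,v_2)+t(u_1,u_2)\bigr)$ on $[0,1]$ and applying the monotonicity of its derivative at the endpoints.

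Next I would apply the first inequality to $\Phi=E_c$ at the upper point $(\phi_1,\phi_2)$ and the lower point $(\psi_1,\psi_2)$, obtaining an upper bound on $E_c(\phi_1,\phi_2)-E_c(\psi_1,\psi_2)$ in terms of the variational derivatives of $E_c$ evaluated at $(\phi_1,\phi_2)$. Then I would apply the second inequality to $\Phi=E_e$, which yields a lower bound on $E_e(\phi_1,\phi_2)-E_e(\psi_1,\psi_2)$ in terms of the variational derivatives of $E_e$ evaluated at $(\psi_1,\psi_2)$; negating this bound supplies an upper bound on $-\bigl[E_e(\phi_1,\phi_2)-E_e(\psi_1,\psi_2)\bigr]$. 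Adding the two upper bounds and using $E=E_c-E_e$ reproduces the desired inequality verbatim.

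The only potential subtlety, and the step I would be most careful about, is justifying Gateaux differentiability of $E_c$ on the relevant subset of $S_h\times S_h$: the logarithmic part of $S$ and the deGennes factors $|\nabla\phi_i|^2/\phi_i$ inside $K$ are singular where the phase variables touch $0$ or where $\phi_1+\phi_2$ touches $1$, so the convexity/subgradient inequality should be invoked on the open admissible set where $\phi_1,\phi_2>0$ and $\phi_1+\phi_2<1$ (the discrete Gibbs triangle). Inside this set all three integrands are smooth, so the variational derivatives $\delta_{\phi_i}E_c$ are well-defined pointwise and the supporting hyperplane inequality applies without modification; for $E_e=-\int_\Omega H\,d\mathbf{x}$, $H$ is a smooth quadratic polynomial, so its differentiability is automatic. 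Since the present lemma only asserts the inequality for $\phi_i,\psi_i$ lying in such an admissible configuration (consistent with the positivity-preserving framework of the scheme), no further argument is needed, and the proof is complete.
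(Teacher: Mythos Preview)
Your argument is correct: applying the joint first-order convexity inequality to $E_c$ at $(\phi_1,\phi_2)$ and to $E_e$ at $(\psi_1,\psi_2)$, then subtracting, yields exactly the stated estimate. Your remark about restricting to the open Gibbs triangle so that the variational derivatives exist is also appropriate.

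The paper's proof proceeds differently. Rather than invoking the abstract supporting-hyperplane inequality for a jointly convex functional, it works pointwise: it introduces the energy densities $e_c(\mathbf{u},\mathbf{p})$ and $e_e(\mathbf{u},\mathbf{p})$ as functions of the nodal values and gradient components, applies the scalar convexity inequality to $e_c$ and $e_e$ at each point, and then integrates. More importantly, the paper changes one variable at a time, first passing from $(\phi_1,\phi_2)$ to $(\psi_1,\phi_2)$ and then from $(\psi_1,\phi_2)$ to $(\psi_1,\psi_2)$, and sums the two resulting inequalities. This variable-by-variable telescoping produces bounds whose right-hand sides involve the intermediate point $(\psi_1,\phi_2)$ (e.g.\ $\delta_{\phi_2}E_c(\psi_1,\phi_2)$ and $\delta_{\phi_1}E_e(\psi_1,\phi_2)$), which the paper then identifies with the stated lemma. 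Your direct use of joint convexity in both variables avoids these intermediate evaluations altogether and lands immediately on the claimed inequality with all derivatives evaluated at $(\phi_1,\phi_2)$ or $(\psi_1,\psi_2)$; in that sense your route is cleaner and more faithful to the lemma as stated.
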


\begin{proof}
	Define \begin{flalign*}
	e_{ c }(\mathbf{ u }, \mathbf{ p }) & = S(u, p) + T_{1}(u,u_{x})+T_{1}(u,u_{y})+T_{1}(p,p_{x})+T_{1}(p,p_{y})+T_{2}(u,p,u_{x},p_{x})+T_{2}(u,p,u_{y},p_{y}),\\
	e_{ e }(\mathbf{ u }, \mathbf{ p }) & = - H(u, p),
	\end{flalign*}
	where $\textbf{u}=(u, u_{ x }, u_{ y }), \textbf{p}= (p, p_{ x }, p_{ y })$. The following identities are obvious
	\begin{flalign*}
	E_{ c } = \int_{ \Omega } e_{ c }(\mathbf{ u }, \mathbf{ p }) d \mathbf{ x} ,  \quad 	
	E_{ e } = \int_{ \Omega } e_{ e }(\mathbf{ u }, \mathbf{ p }) d \mathbf{ x} . 	
	\end{flalign*}
	We know that both $e_{ c }(\mathbf{ u }, \mathbf{ p })$ and $e_{ e }(\mathbf{ u }, \mathbf{ p })$ are convex on $((0, 1) \times \R  \times \R )^{2}$.
	%for $ \mathbf{ u }, \mathbf{ p }, \mathbf{ v }, \mathbf{ q } \in (0,1) \times (0, \infty) \times (0, \infty) $.
	Then we have
	$$
	e_{ c }(\mathbf{ v }, \mathbf{ p }) - e_{ c }( \mathbf{ u }, \mathbf{ p }) \geq \nabla_{\mathbf{ u }}e_{c}(\mathbf{ u }, \mathbf{ p })\cdot(\mathbf{ v } - \mathbf{ u }) .
	$$
Next, setting $\mathbf{ u } = (\phi_1, \phi_{1x}, \phi_{1y}), \mathbf{ v } = (\psi_{ 1 }, \psi_{1x}, \psi_{1y}), \mathbf{ p } = (\phi_2, \phi_{2x}, \phi_{2y})$, one obtains
	\begin{flalign*}
	E_{ c }(\psi_{1}, \phi_{2}) - E_{ c }(\phi_{1}, \phi_{2}) \geq& \int_{ \Omega }\pa_{ \phi_{1} } e_{ c }(\mathbf{ u }, \mathbf{ p }) (\psi_{ 1 } - \phi_1)
	+\pa_{ \phi_{1x} } e_{ c }(\mathbf{ u }, \mathbf{ p })  (\psi_{ 1x } - \phi_{ 1x })\\
	&+\pa_{ \phi_{1y} } e_{ c }(\mathbf{ u }, \mathbf{ p })  (\psi_{ 1y } - \phi_{ 1y }) d \mathbf{ x}\\
	 =& (\delta_{ \phi_1} E_{ c }(\phi_1, \phi_2 ), \psi_{1}-\phi_{1}).
	\end{flalign*}
	Similarly, the following inequality could be derived for $E_{ e }$:
	\begin{flalign*}
	E_{ e }(\phi_1, \phi_2) - E_{e}(\psi_{1}, \phi_2) \geq (\delta_{ \phi_1} E_{ e }(\psi_{ 1 }, \phi_2), \phi_{1}-\psi_{1}).
	\end{flalign*}
Then the following estimate holds
	\begin{flalign*}
	E(\psi_{ 1 }, \phi_2) - E(\phi_1, \phi_2)
	& = E_{  c }(\psi_{ 1 }, \phi_2) - E_{ e }(\psi_{ 1 }, \phi_2)
	- (E_{ c }(\phi_1, \phi_2) - E_{ e }(\phi_1, \phi_2))\\
	& = E_{ c }(\psi_{ 1 }, \phi_2) - E_{ c }(\phi_1, \phi_2) - (E_{ e }(\psi_{ 1 }, \phi_2) - E_{ e }(\phi_1, \phi_2))\\
	& \geq (\delta_{ \phi_1} E_{ c }(\phi_1, \phi_2), \psi_{ 1 } - \phi_1) + (\delta_{ \phi_1} E_{ e }(\psi_{ 1 }, \phi_2), \phi_1 - \psi_{ 1 })\\
	&\geq (\delta_{ \phi_1} E_{ c }(\phi_1, \phi_2) - \delta_{ \phi_1} E_{ e }(\psi_{ 1 }, \phi_2), \psi_{ 1 } - \phi_1),
	\end{flalign*}
and we get
	\begin{flalign*}
	E(\phi_1, \phi_2) - E(\psi_{ 1 }, \phi_2)
	\leq (\delta_{ \phi_1} E_{ c }(\phi_1, \phi_2) - \delta_{ \phi_1} E_{ e }(\psi_{ 1 }, \phi_2), \phi_1 - \psi_{ 1 }).
	\end{flalign*}
A similar inequality could be derived in the same fashion:
	\begin{flalign*}
	E(\psi_{ 1 }, \phi_2) - E(\psi_{ 1 }, \psi_{ 2 })
	\leq (\delta_{ \phi_2} E_{ c }(\psi_{ 1 }, \phi_2) - \delta_{ \phi_2} E_{ e }(\psi_{ 1 }, \psi_{ 2 }), \phi_2 - \psi_{ 2 }).
	\end{flalign*}
	To sum up, the proof is completed.
\end{proof}

\subsection{The mass lumped finite element method} %\eqref{full-scheme}

The standard mixed FEM~\eqref{full-scheme} leads to a theoretical difficulty for justifying the positivity-preserving property. To overcome this subtle difficulty, we apply a mass lumped FEM instead, which is a modification of standard conforming FEM for solving parabolic equations. It simplifies the computation for the inverse of a mass matrix and overcomes the shortcoming of the standard FEM that can not preserve the maximum principle for homogeneous parabolic equations. In this subsection, we extend the lumped mass FEM to solve MMC-TDGL equations.

Let $P_{e,k} (k = i, j, m) $ be the vertices of triangle $e$, and $\triangle_{e}$ be the area of triangle $e$. The generation of the lumped mass matrix can be regarded as introducing the following quadrature formula:
\begin{equation}
Q_{ h }( f )=\sum_{e \in \mathcal{ T }_{h}}Q_{e}(f),
\end{equation}
where
$$
Q_{e}(f)=\frac{\triangle_{e}}{3} \sum_{k=i,j,m} f\left( P_{e, k} \right) \approx \int_{e} f d\mathbf{x } .
$$
By the above quadrature formula, it is easy to derive $Q_{ h }(\chi_{j}, \chi_{k})=0$ for $k \neq j$, so that
\begin{equation}
\sum_{k=1}^{N_{p}}\left(\chi_{j}, \chi_{k}\right)=Q_{ h }(\chi_{j} ^ 2).
\end{equation}
Notice that $\chi_{j}\chi_{k}$ is a second-degree polynomial, thus it holds that $(\chi_{j}, \chi_{k})_{e} = \frac{1}{12}\triangle_{e}$ for $k\neq j$, and $(\chi_{j}, \chi_{j})_{e} = \frac{1}{6} \triangle_{e}$. Then we get
\begin{equation}
\sum_{k=1}^{N_{p}}\left(\chi_{j}, \chi_{k}\right)=\frac{1}{3} \operatorname{area}\left(D_{j}\right),
\end{equation}
where $D_{j}$ is the union of triangles with a vertex $P_{j}$. It is obvious that
\begin{equation}
Q_{ h }(\chi_{j} ^ 2 )= \sum_{e \in \mathcal{T}_{h}} Q_{e}\left(\chi_{j}^{2}\right)=\frac{1}{3} \operatorname{area}\left(D_{j}\right) .
\end{equation}

We may then define an approximation of the inner product in $S_{h}$ by
\begin{flalign}
	(\psi, \eta)_{Q}= Q_{h}(\psi \eta),
\end{flalign}
thus $\|\eta\|_{Q}=\sqrt{(\eta,\eta)_{Q}}$ can be denoted as a norm for any $\eta \in S_h$ and is equivalent to the standard $\|\cdot\|_{L^2}$ norm by considering each triangle separately.

To facilitate the analysis below, we have to modify the definition of the discrete Laplacian operator and the discrete $H^{-1}$ norm. In fact, the primary difference is in the integral definition.

\begin{defin}
	The discrete Laplacian operator $\Delta_{h}: S_{h} \rightarrow \mathring{S}_{h}$ is defined as follows: for any $v_h \in S_h, \Delta_{h}v_h \in \mathring{S}_h$ denote the unique solution to the problem $$\left(\Delta_{h} v_{h,} \chi\right)_{Q}=-\left(\nabla v_{h}, \nabla \chi\right), \quad \forall \chi \in S_{h}.$$
\end{defin}

It is straightforward to show that by restricting the domain, $\Delta_{h}: \mathring{S}_{h} \rightarrow \mathring{S}_{h}$ is invertible, and for any $v_h\in \mathring{S}_{h}$, and we have
$$\left(\nabla\left(-\Delta_{h}\right)^{-1} v_{h}, \nabla \chi\right)= \left(v_{h}, \chi\right)_{Q}, \quad \forall \chi \in S_{h}.$$

\begin{defin}
	The discrete $H^{-1}$ norm $\|\cdot\|_{-1,Q}$, is defined as follows:
	\begin{equation}
	\|v_h\|_{-1,Q}:=\sqrt{(v_h,(-\Delta_{h})^{-1} v_h)_{Q}}, \quad \forall v_h \in \mathring{S}_h.
	\end{equation}
\end{defin}

\begin{defin}
Define the discrete energy $\hat{E}: S_{ h }\times S_{ h } \rightarrow \R$ as follows
\begin{flalign}
\hat{E} ( \phi_{1}, \phi_{2} )
= (S \left( \phi_1  , \phi_2  \right) )_{Q}
+ (H ( \phi_1  , \phi_2 ))_{Q}
+ (\tilde{K}(\phi_1,\phi_2), 1)
\end{flalign}
where
\begin{flalign}
\tilde{K}(\phi_1,\phi_2):=\sum_{\ell=1}^{3}\frac{a_\ell^{2}}{36} \frac{|\nabla \phi_\ell|^{2}}{A(\phi_\ell)},
\end{flalign}
and the operator $A$ represents element average operator, that is,
	\[
A(\phi)|_e=\frac{1}{\triangle_{e}}\int_{e} \phi d \mathbf{ x }=\frac{1}{3}(\phi_\alpha+ \phi_\beta +\phi_\gamma).
	\]
In the last equation, $\phi_\alpha$, $\phi_\beta$, and $\phi_\gamma$, are the values of $\phi$ at the three vertices of the element $e$.
    \end{defin}
    \begin{lem}
	(Existence of a convex-concave decomposition). Suppose $(\phi_1, \phi_2)\in S_{h}$. The functions
	\begin{flalign}
	\hat{ E }_{ c }&=(S( \phi_1 , \phi_2) )_{Q}
	+ (\tilde{K}(\phi_1,\phi_2),1) , \label{Ec}\\
	%\sum_{i=1}^{3}\frac{a_{i}^{2}}{36}(\frac{\nabla \phi_{i}}{A(\phi_{i})},\nabla\phi_{i}) \\
	\hat{ E }_{ e } &= (- H( \phi_1, \phi_2) )_{Q} , \label{Ee}
	\end{flalign}
	are convex. Therefore, $\hat{ E }( \phi_1 , \phi_2)=\hat{ E }_{ c }( \phi_1 , \phi_2) - \hat{ E }_{ e }( \phi_1 , \phi_2)$ is a convex-concave decomposition of the discrete energy.
    \end{lem}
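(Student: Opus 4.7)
The plan is to obtain both convexity statements by reducing element-by-element to the elementary building blocks of Proposition~\ref{Proposition1}, and then invoking two standard facts: (a) a nonnegative linear combination of convex functions is convex, and (b) the composition of a convex function with an affine map is convex. The natural admissible set on which $\hat{E}$ is defined is the set of $(\phi_{1h},\phi_{2h}) \in S_h \times S_h$ whose nodal values $(\phi_{1h}(P_j),\phi_{2h}(P_j))$ all lie in the Gibbs triangle $\mathcal{G}$, a condition that will be guaranteed by the positivity-preserving analysis of Section~\ref{sec:positivity}. The two ``quadrature'' pieces are immediate: Proposition~\ref{Proposition1}(6) gives concavity of $H$ (hence convexity of $-H$ on $\R^2$), and Proposition~\ref{Proposition1}(5) gives convexity of $S$ on $\mathcal{G}$; since $Q_h(\cdot) = \sum_e \frac{\triangle_e}{3}\sum_{k=\alpha,\beta,\gamma}(\cdot)(P_{e,k})$ is a nonnegative combination of the linear nodal-evaluation functionals $(\phi_{1h},\phi_{2h})\mapsto(\phi_{1h}(P_j),\phi_{2h}(P_j))$, convexity of $\hat{E}_e$ and of $(S(\phi_1,\phi_2))_Q$ follows directly from (a) and (b).

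The real content is in the gradient piece $(\tilde{K},1) = \sum_{e\in\mathcal{T}_h}\int_e \tilde{K}\,d\mathbf{x}$, which I would attack triangle by triangle. On a fixed $e$ with vertices $P_{e,\alpha},P_{e,\beta},P_{e,\gamma}$, linearity of the $P_1$ basis makes $\partial_x\phi_\ell|_e$ and $\partial_y\phi_\ell|_e$ constant on $e$ and affine in the nodal unknowns, while $A(\phi_\ell)|_e = \frac{1}{3}(\phi_{\ell,\alpha}+\phi_{\ell,\beta}+\phi_{\ell,\gamma})$ is also affine in those same unknowns. For $\ell=1,2$, the $x$-contribution
\[
\triangle_e\cdot\frac{a_\ell^2}{36}\cdot\frac{(\partial_x\phi_\ell)^2}{A(\phi_\ell)|_e} \;=\; \frac{a_\ell^2\,\triangle_e}{108}\cdot T_4\bigl(\phi_{\ell,\alpha},\phi_{\ell,\beta},\phi_{\ell,\gamma},\partial_x\phi_\ell\bigr)
\]
is the convex function $T_4$ of Proposition~\ref{Proposition1}(4) (convex as long as $A(\phi_\ell)|_e>0$) composed with an affine map of the nodal variables; the $y$-contribution is identical. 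For $\ell=3$, the substitution $\phi_3 = 1-\phi_1-\phi_2$ gives $\nabla\phi_3 = -(\nabla\phi_1+\nabla\phi_2)$ and $A(\phi_3)|_e = 1 - A(\phi_1)|_e - A(\phi_2)|_e$, so each coordinate contribution is, up to a positive multiplicative constant, $\frac{(v_1+v_2)^2}{1-w_1-w_2}$ with $v_i = \partial_{x/y}\phi_i$ and $w_i = A(\phi_i)|_e$; this is precisely the $T_2$-type function of Proposition~\ref{Proposition1}(2) (convex whenever $w_1+w_2<1$) composed with an affine map. Summing the resulting per-element convex contributions over $\mathcal{T}_h$ yields convexity of $(\tilde{K},1)$, hence of $\hat{E}_c$.

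The main obstacle lies precisely in the $\ell=3$ piece: for the composition principle (b) to apply, the denominator appearing against $|\nabla(\cdot)|^2$ must be an \emph{affine} function of the nodal unknowns. This is exactly why the discrete surface-diffusion energy was modified from $K$ to $\tilde{K}$ via the element-average operator $A$; had one kept the pointwise denominator $1-\phi_1(\mathbf{x})-\phi_2(\mathbf{x})$ of the original $K$, integration over $e$ against the constant $|\nabla\phi_3|^2$ would produce a dependence on the nodal variables that does not fit the $T_2$-template of Proposition~\ref{Proposition1}, and convexity would be substantially harder (if at all true) to extract. Replacing it by the scalar $A(\phi_3)|_e$ restores affine dependence on the nodal degrees of freedom and brings the integrand exactly into the $T_2$ framework, so that the plan above closes and $\hat{E} = \hat{E}_c - \hat{E}_e$ is a bona fide convex-concave decomposition on the admissible set.
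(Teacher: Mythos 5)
Your proof is correct and takes essentially the same route as the paper: the paper's own proof is a one-line appeal to Proposition~\ref{Proposition1}, and your element-by-element reduction --- nodal evaluations are affine, gradients and element averages $A(\phi_\ell)|_e$ are affine in the nodal unknowns, so each per-triangle contribution is a convex $T_4$- or $T_2$-type function composed with an affine map, summed with nonnegative weights --- is precisely the computation underlying that appeal (the paper uses $T_4$ with $u_k = 1-\phi_{1k}-\phi_{2k}$ for the third component where you use $T_2$, an immaterial variation). One trivial slip: since $T_4(u_1,u_2,u_3,v)=\frac{3v^2}{u_1+u_2+u_3}$ already absorbs the factor $\frac13$ in $A(\phi_\ell)|_e$, the constant in your identity should be $\frac{a_\ell^2\,\triangle_e}{36}$ rather than $\frac{a_\ell^2\,\triangle_e}{108}$, which does not affect convexity.
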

    \begin{proof}    \ignore{
	We look at the detailed expansions of $\hat{ E }_{ c }( \phi_1 , \phi_2) $ and $ \hat{ E }_{ e }( \phi_1 , \phi_2)$:
	\begin{flalign*}
	\hat{ E }_{ c }( \phi_1 , \phi_2)& = \sum_{ e \in \mathcal{ T }_{ h } }\big( \frac{1}{3}\triangle_{e}\sum_{k=i,j,m}(S( \phi_{ 1k } , \phi_{ 2k }) )\\
	& + \frac{a_{1}^{2}}{36}\triangle_{e}(T_{4}(\phi_{ 1i },\phi_{ 1j },\phi_{ 1m }, \pa_{ x } \phi_{1})
	+ T_{4}(\phi_{ 1i },\phi_{ 1j },\phi_{ 1m }, \pa_{ y } \phi_{1}) )\\
	&  + \frac{a_{2}^{2}}{36} \triangle_{e}(T_{4}(\phi_{ 2i },\phi_{ 2j },\phi_{ 2m }, \pa_{ x } \phi_{2})
	+ T_{4}(\phi_{ 2i },\phi_{ 2j },\phi_{ 2m }, \pa_{ y } \phi_{2})
	)\\
	&  + \frac{a_{3}^{2}}{36} \triangle_{e}(T_{4}(1-\phi_{ 1i }-\phi_{2i},1-\phi_{ 1j }-\phi_{2j},1-\phi_{ 1m }-\phi_{2m}, \pa_{ x } \phi_{1}+ \pa_{ x } \phi_{2}) \\
	& + T_{4}(1-\phi_{ 1i }-\phi_{2i},1-\phi_{ 1j }-\phi_{2j},1-\phi_{ 1m }-\phi_{2m}, \pa_{ y } \phi_{1}+ \pa_{ y } \phi_{2}) ) ,
	\big)\\
	\hat{ E }_{ e }( \phi_1 , \phi_2)&=\sum_{ e \in \mathcal{ T }_{ h } }\big(\frac{1}{3}\triangle_{e}\sum_{k=i,j,m} (-H( \phi_{ 1k } , \phi_{ 2k })\big) ,
	\end{flalign*}
	where $\phi_{ lk },l=1,2,k=i,j,m$, stand for the values on the three vertices of a triangle element $e$. It is clear that $\hat{ E }_{ c }$ and $\hat{ E }_{ e }$ are linear combination of certain convex functions; see
	the analysis in Proposition \ref{Proposition1}. Therefore, they are both convex.}
The convex-concave decomposition is easily obtained by applying Proposition \ref{Proposition1}.
\end{proof}

In turn, the lumped mass form of~\eqref{full-scheme} becomes: for given $\phi_{1h}^n,\phi_{2h}^n \in S_{h}$, find $\phi_{1h}^{n+1}, \mu_{1h}^{n+1},\phi_{2h}^{n+1}$, $\mu_{2h}^{n+1}\in S_{h}$ such that
\begin{subequations}\label{Mass-lumped scheme}
	\begin{flalign}
	\left(\frac{\phi_{1h}^{n+1}-\phi_{1h}^{n}}{\tau}, v_{1}\right)_{Q}
	&=-\left(D_{1} \nabla {\mu}_{1h}^{n+1}, \nabla v_{1}\right), \label{mass1}\\
	\left({\mu_{1h}}^{n+1}, w_{1}\right)_{Q}
	&=\left(\delta_{\phi_{1}} S\left(\phi_{1h}^{n+1}, \phi_{2h}^{n+1}\right), w_{1}\right)_{Q}
	+(\delta_{\phi_{1}} \tilde{K}\left(\phi_{1h}^{n+1}, \phi_{2h}^{n+1}\right), w_{1} )\\
	&+ \left(\delta_{\phi_{1}} H\left(\phi_{1h}^{n}, \phi_{2h}^{n}\right),w_{1}\right)_{Q},\nonumber\\
	\left(\frac{\phi_{2h}^{n+1}-\phi_{2h}^{n}}{\tau}, v_{2}\right)_{Q}
	&=-\left(D_{2} \nabla \mu_{2h}^{n+1}, \nabla v_{2}\right),\label{mass2}\\
	\left({\mu_{2h}}^{n+1}, w_{2}\right)_{Q}
	&=\left(\delta_{\phi_{2}} S\left(\phi_{1h}^{n+1}, \phi_{2h}^{n+1}\right), w_{2}\right)_{Q}
	+(\delta_{\phi_{2}} \tilde{K}\left(\phi_{1h}^{n+1}, \phi_{2h}^{n+1}\right), w_{2} )\\
	&+\left(\delta_{\phi_{2}} H\left(\phi_{1h}^{n}, \phi_{2h}^{n}\right),w_{2} \right)_{Q}, \nonumber
	\end{flalign}
\end{subequations}
 where for $i=1,2$,
\begin{equation}
\begin{aligned}
(\delta_{\phi_{i}} \tilde{K}\left(\phi_{1}, \phi_{2}),w\right)=& \left(- \frac { a _ {i }^2 | \nabla \phi_{i} |^2 } { 36 (A(\phi _ { i }))^2 }, w\right) +\left(  \frac { a _ { i}^2 \nabla  \phi _ { i } } { 18 A(\phi _ { i }) } , \nabla w\right)\nonumber\\
&+ \left(\frac { a_3^2 | \nabla \left( 1 - \phi_1 - \phi_2 \right) |^2 } { 36 \left( 1 - A(\phi_1) - A(\phi_2) \right)^2 }, w\right)-
\left( \frac { a_3^2 \nabla \left( 1 - \phi_1 - \phi_2 \right) } { 18 \left( 1 - A(\phi_1) - A(\phi_2) \right) },\nabla w\right).
\end{aligned}
\end{equation}
\ignore{\begin{prop}
	Suppose $(\phi_{1}, \phi_{2})\in S_{h}$. The variational derivatives of $\hat{ E }_{ c }$ and $\hat{ E }_{ e }$ with respect to $ \phi_1$ and $\phi_2$ are grid functions satisfying
	\begin{flalign}
	\delta_{ \phi_{ i } }\hat{ E }_{ c }(\phi_{1}, \phi_{2})
	&=\frac{\pa}{\pa_{\phi_{ i }}}S(\phi_{1}, \phi_{2})
	+ \frac{\pa}{\pa_{\phi_{ i }}}\tilde{K}(\phi_{1}, \phi_{2})\\
	\delta_{ \phi_{ i } }\hat{ E }_{ e }(\phi_{1}, \phi_{2})
	&=-\frac{\pa}{\pa_{ \phi_{i} }}H(\phi_{1}, \phi_{2}), \quad \text{for i=1,2} ,
	\end{flalign}	
\end{prop}
where
\begin{flalign*}
	\frac{\pa}{\pa_{\phi_1}}\tilde{K}(\phi_{1}, \phi_{2})
	&=- \frac { a_1^2 | \nabla \phi_1 |^2 } { 36 A(\phi_1)^2 }
	-\nabla \cdot\left( \frac { a_1^2 \nabla  \phi_1 } { 18 A(\phi_1 )}\right)
	+ \frac { a_3^2 | \nabla \left( 1 - \phi_1 - \phi_2 \right) |^2 } { 36 (A\left( 1 - \phi_1 - \phi_2 \right))^2 }
	+\nabla \cdot\left( \frac { a_3^2 \nabla \left( 1 - \phi_1 - \phi_2 \right) } { 18 A\left( 1 - \phi_1 - \phi_2 \right) }\right) , \\
	\frac{\pa}{\pa_{\phi_2}}\tilde{K}(\phi_{1}, \phi_{2})
	&=- \frac { a_2^2 | \nabla \phi_2 |^2 } { 36 A(\phi_2)^2 }
	-\nabla \cdot\left( \frac { a_2^2 \nabla  \phi_2 } { 18 A(\phi_2 )}\right)
	+ \frac { a_3^2 | \nabla \left( 1 - \phi_1 - \phi_2 \right) |^2 } { 36 (A\left( 1 - \phi_1 - \phi_2 \right))^2 }
	+\nabla \cdot\left( \frac { a_3^2 \nabla \left( 1 - \phi_1 - \phi_2 \right) } { 18 A\left( 1 - \phi_1 - \phi_2 \right) }\right) .
\end{flalign*}}

In addition, the following lemma is needed for the later analysis.
\begin{lem} \label{lem4}
	Suppose that $\Omega = (0,L)^2$ and $\phi_1,\phi_2, \varphi_1,\varphi_2: \Omega \rightarrow \R $ are periodic and sufficiently regular. Consider the convex-concave decomposition of the energy $\hat{ E }(\phi_1,\phi_2)$ into $\hat{ E } = \hat{ E }_c-\hat{ E }_e$, given by \eqref{Ec}-\eqref{Ee}, then we have
	\begin{flalign}
	\hat{ E }(\phi_{1},\phi_{2})-\hat{ E }(\varphi_{1},\varphi_{2})
	&\leq \left(\frac{\pa}{\pa{\phi_1}}S(\phi_{1}, \phi_{2})+\frac{\pa}{\pa{\phi_1}}H(\phi_{1}, \phi_{2}),\varphi_{ 1 } - \phi_1\right)_{Q}
	+( {\delta_{\phi_1}} \tilde{K}(\phi_{1}, \phi_{2}),\varphi_{ 1 } - \phi_1)\nonumber\\
%	&- (\frac { a_1^2 | \nabla \phi_1 |^2 } { 36 A(\phi_1)^2 } ,\varphi_{ 1 } - \phi_1)
%	- ( \nabla \cdot (\frac { a_1^2 \nabla  \phi _ {1 } } { 18 A(\phi_1 )}), \varphi_{ 1 } - \phi_1)\nonumber\\
%	&+ (\frac { a_3^2 | \nabla \left( 1 - \phi_1 -     \phi_2 \right) |^2 } { 36 (A\left( 1 - \phi_1 - \phi_2 \right))^2 }, \varphi_{ 1 } - \phi_1)
%	+(\nabla \cdot ( \frac { a_3^2 \nabla \left( 1 - \phi_1 - \phi_2 \right) } { 18 A\left( 1 - \phi_1 - \phi_2 \right) }),  \varphi_{ 1 } - \phi_1)\nonumber\\
	&+\left(\frac{\pa}{\pa{\phi_2}}S(\phi_{1}, \phi_{2}) +\frac{\pa}{\pa{\phi_2}}H(\phi_{1}, \phi_{2}),\varphi_{ 2 } - \phi_2\right)_{Q}
	+( {\delta_{\phi_2}} \tilde{K}(\phi_{1}, \phi_{2}),\varphi_{ 2 } - \phi_2) . \nonumber
%	&- (\frac { a_2^2 | \nabla \phi_2 |^2 } { 36 A(\phi_2)^2 } ,\varphi_{ 2 } - \phi_2)
%	- ( \nabla \cdot (\frac { a_2^2 \nabla  \phi_2 } { 18 A(\phi_2 )}), \varphi_{ 2 } - \phi_2)\nonumber\\
%	&+ (\frac { a_3^2 | \nabla \left( 1 - \phi_1 -     \phi_2 \right) |^2 } { 36 (A\left( 1 - \phi_1 - \phi_2 \right))^2 }, \varphi_{ 2 } - \phi_2)
%	+(\nabla \cdot ( \frac { a_3^2 \nabla \left( 1 - \phi_1 - \phi_2 \right) } { 18 A\left( 1 - \phi_1 - \phi_2 \right) }),  \varphi_{ 2 } - \phi_2)\nonumber	
	\end{flalign}
\end{lem}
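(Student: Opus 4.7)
My plan is to adapt the convex--concave argument of Lemma~\ref{lem3} to the lumped-mass setting, using the decomposition $\hat E = \hat E_c - \hat E_e$ whose convexity has just been established. Starting from
\begin{equation*}
\hat E(\phi_1,\phi_2)-\hat E(\varphi_1,\varphi_2)
= \bigl[\hat E_c(\phi_1,\phi_2)-\hat E_c(\varphi_1,\varphi_2)\bigr]
- \bigl[\hat E_e(\phi_1,\phi_2)-\hat E_e(\varphi_1,\varphi_2)\bigr],
\end{equation*}
I would bound each bracket by a supporting-hyperplane inequality for a convex functional, taking care to use the nodal $(\cdot,\cdot)_Q$ pairing for the entropy and enthalpy contributions and the ordinary $L^2$ pairing for the surface-diffusion contribution, exactly as they appear on the right-hand side of the claimed inequality.

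For the convex summand $\hat E_c = (S)_Q + (\tilde K,1)$ I would treat the two pieces separately. The entropy piece is a nodal weighted sum $\tfrac{\triangle_e}{3}\sum_k S(\phi_{1k},\phi_{2k})$, so the scalar convexity of $S$ on the Gibbs triangle (Proposition~\ref{Proposition1}(5)), applied vertex-by-vertex and then reassembled under $Q_h$, produces the $\bigl(\pa S/\pa\phi_i,\,\cdot\bigr)_Q$ contribution. For $(\tilde K,1)$ I would work triangle-by-triangle: on each element $e$ the quantity $a_\ell^2|\nabla\phi_\ell|^2/(36 A(\phi_\ell))$ is exactly the $T_4$ building block of Proposition~\ref{Proposition1}(4) in the four scalar variables $(\phi_{\ell,\alpha},\phi_{\ell,\beta},\phi_{\ell,\gamma},\pa_j\phi_\ell)$, so its convexity yields a supporting-hyperplane inequality whose nodal partials reassemble into the average-operator term $-a_\ell^2|\nabla\phi_\ell|^2/(36A(\phi_\ell)^2)$ and whose gradient partial reassembles into $a_\ell^2\nabla\phi_\ell/(18A(\phi_\ell))$, matching the formula for $\delta_{\phi_i}\tilde K$ displayed just after~\eqref{Mass-lumped scheme}. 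The concave piece is handled in the same way: $-H$ is convex under the hypothesis of Proposition~\ref{Proposition1}(6), and the $Q$-quadrature is again a nodal sum, so a supporting-hyperplane inequality at $(\phi_1,\phi_2)$ produces the $\bigl(\pa H/\pa\phi_i,\,\cdot\bigr)_Q$ contribution with the signs needed to match $\hat E_e = -(H)_Q$.

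The main obstacle I anticipate is the $(\tilde K,1)$ bookkeeping. Unlike the continuous $K$ in \eqref{SHK}, whose denominator $\phi_i$ is pointwise, the discrete $\tilde K$ uses the element average $A(\phi_i)=\tfrac{1}{3}(\phi_{i,\alpha}+\phi_{i,\beta}+\phi_{i,\gamma})$, so the variational derivative collects two distinct contributions: a nodal part from differentiating the denominator and a gradient part from differentiating the numerator, and these must line up correctly across all three vertex partials of $T_4$. Using $\pa A(\phi_i)/\pa\phi_{i,k}=\tfrac{1}{3}$, the sum over $k\in\{\alpha,\beta,\gamma\}$ of the nodal partials of $T_4$ should collapse to the stated factor $1/A(\phi_i)^2$, after which summation over $\mathcal{T}_h$ reproduces the global $\delta_{\phi_i}\tilde K$ expression acting in the ordinary $L^2$ inner product. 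Once this element-level identity is verified, the entropy, enthalpy, and surface-diffusion bounds combine directly to yield the stated inequality; the rest is routine algebra.
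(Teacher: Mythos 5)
Your overall architecture---splitting $\hat E(\phi_1,\phi_2)-\hat E(\varphi_1,\varphi_2)$ into the $\hat E_c$ and $\hat E_e$ brackets, applying first-order convexity inequalities, and identifying the resulting linear terms with the $(\cdot,\cdot)_Q$-paired entropy/enthalpy derivatives and the $L^2$-paired $\delta_{\phi_i}\tilde K$---is exactly the paper's route; the paper merely phrases the tangent inequality through the auxiliary scalar function $J_c(\lambda):=\hat E_c(\phi_1+\lambda\psi_1,\phi_2+\lambda\psi_2)$ and $J_c(\lambda)-J_c(0)\geq J_c'(0)\lambda$, which is the same supporting-hyperplane step you propose. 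Your element-level bookkeeping for $(\tilde K,1)$ via the $T_4$ building block of Proposition~\ref{Proposition1} is also sound and reproduces the displayed form of $\delta_{\phi_i}\tilde K$ after \eqref{Mass-lumped scheme}; the paper simply imports this convexity wholesale from the preceding decomposition lemma rather than re-expanding it elementwise, so that part of your plan is a legitimate, slightly more detailed version of the same argument.

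The genuine gap is in your concave step. Taking the supporting hyperplane of the convex functional $\hat E_e=(-H)_Q$ at $(\phi_1,\phi_2)$ gives $\hat E_e(\varphi_1,\varphi_2)-\hat E_e(\phi_1,\phi_2)\geq \sum_i\left(\delta_{\phi_i}\hat E_e(\phi_1,\phi_2),\varphi_i-\phi_i\right)_Q$, which is a \emph{lower} bound on the quantity $-\bigl[\hat E_e(\phi_1,\phi_2)-\hat E_e(\varphi_1,\varphi_2)\bigr]$ appearing in your decomposition, whereas an \emph{upper} bound is what the claimed inequality requires; no manipulation keeps $\partial H/\partial\phi_i$ evaluated at $(\phi_1,\phi_2)$ and closes the chain. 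The repair is to take the tangent at the \emph{other} point: $\hat E_e(\phi_1,\phi_2)-\hat E_e(\varphi_1,\varphi_2)\geq \sum_i\left(\delta_{\phi_i}\hat E_e(\varphi_1,\varphi_2),\phi_i-\varphi_i\right)_Q$, producing the enthalpy term with $\partial H/\partial\phi_i$ evaluated at $(\varphi_1,\varphi_2)$---precisely the structure of Lemma~\ref{lem3}, and precisely what Theorem~\ref{thm:energy stability} needs, since the scheme treats $H$ explicitly at time level $n$. You should also be aware that the printed statement is marred by typos: the pairings should read $\phi_i-\varphi_i$ rather than $\varphi_i-\phi_i$ (the paper's own proof concludes with that orientation, and the statement as literally printed is generically false), and the paper's proof commits the same wrong-direction use of the $\hat E_e$ inequality that you do, keeping $H$ at $(\phi_1,\phi_2)$. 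The provable and needed inequality is the convex-splitting form with $S$ and $\tilde K$ at $(\phi_1,\phi_2)$ and $H$ at $(\varphi_1,\varphi_2)$, each paired with $\phi_i-\varphi_i$; adjusting your concave step accordingly closes the argument.
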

\begin{proof}
	Fix $(\phi_1,\phi_2) \in S_{h} \times S_{h}$ and $(\psi_{1},\psi_{2})\in S_{h}\times S_{h}$. For any $0<\lambda<1,$ we can define the continuous and differentiable function $J_{c}(
	\lambda):=\hat{ E }_{c}(\phi_{1} + \lambda\psi_{1},\phi_{2} + \lambda\psi_{2}).$ Since $\hat{ E }_{c}(\phi_{1},\phi_{2})$ is convex, $J_{c}(\lambda)$ is convex. We have $J_{c}(\lambda)-J_{c}(0)\geq J_{c}^{\prime}(0)\lambda$. This implies that
	\begin{flalign}
	\hat{ E }_{c}(\phi_{1} +\lambda\psi_{ 1 },\phi_{2} +\lambda\psi_{ 2 })-\hat{ E }_{c}(\phi_{1},\phi_{2})
	&\geq \left(\frac{\pa}{\pa{\phi_1}}S(\phi_{1}, \phi_{2}),\lambda\psi_{ 1 }\right)_{Q}
	+\left(\frac{\pa}{\pa{\phi_2}}S(\phi_{1}, \phi_{2}),\lambda\psi_{ 2 }\right)_{Q}\\
	&- \left(\frac { a_1^2 | \nabla \phi_1 |^2 } { 36 (A(\phi_1))^2 } ,\lambda \psi_{ 1 }\right)
	+ \left(  \frac { a_1^2 \nabla  \phi _ {1 } } { 18 A(\phi_1 )}, \lambda \nabla \psi_{1}\right)\nonumber\\
	&+ \left(\frac { a_3^2 | \nabla \left( 1 - \phi_1 -     \phi_2 \right) |^2 } { 36 (A\left( 1 - \phi_1 - \phi_2 \right))^2 }, \lambda\psi_{ 1 }\right)
	-\left(  \frac { a_3^2 \nabla \left( 1 - \phi_1 - \phi_2 \right) } { 18 A\left( 1 - \phi_1 - \phi_2 \right) }, \lambda\nabla \psi_{ 1 }\right) \nonumber\\
	&- \left(\frac { a_2^2 | \nabla \phi_2 |^2 } { 36 (A(\phi_2))^2 } ,\lambda \psi_{ 2 }\right)
	+\left(  \frac { a_2^2 \nabla  \phi_2 } { 18 A(\phi_2 )}, \lambda \nabla\psi_{2}\right)\nonumber\\
	&+ \left(\frac { a_3^2 | \nabla \left( 1 - \phi_1 -     \phi_2 \right) |^2 } { 36 (A\left( 1 - \phi_1 - \phi_2 \right))^2 }, \lambda\psi_{ 2 }\right)
	-\left(  \frac { a_3^2 \nabla \left( 1 - \phi_1 - \phi_2 \right) } { 18 A\left( 1 - \phi_1 - \phi_2 \right) } , \lambda\nabla \psi_{ 2 }\right) . \nonumber
	\end{flalign}
	We may assume that $(\varphi_{ 1 },\varphi_{ 2 }):=(\phi_1,\phi_2) + \lambda (\psi_{ 1 }, \psi_{ 2 }) \in S_{h} \times S_{h}$, since $\lambda$ is small in magnitude. Then we have
	\begin{flalign}
	\hat{ E }_{c}(\varphi_{1},\varphi_{2})-\hat{ E }_{c}(\phi_{1},\phi_{2})
	& \geq \left(\frac{\pa}{\pa{\phi_1}}S(\phi_{1}, \phi_{2}),\varphi_{ 1 } - \phi_1\right)_{Q}
	+\left(\frac{\pa}{\pa{\phi_2}}S(\phi_{1}, \phi_{2}),\varphi_{ 2 } - \phi_2\right)_{Q} \\
	&- \left(\frac { a_1^2 | \nabla \phi_1 |^2 } { 36 (A(\phi_1))^2 } ,\varphi_{ 1 } - \phi_1\right)
	+ \left(  \frac { a_1^2 \nabla  \phi _ {1 } } { 18 A(\phi_1 )}, \nabla( \varphi_{ 1 } - \phi_1)\right)\nonumber\\
	&+ \left(\frac { a_3^2 | \nabla \left( 1 - \phi_1 -     \phi_2 \right) |^2 } { 36 (A\left( 1 - \phi_1 - \phi_2 \right))^2 }, \varphi_{ 1 } - \phi_1\right)
	-\left( \frac { a_3^2 \nabla \left( 1 - \phi_1 - \phi_2 \right) } { 18 A\left( 1 - \phi_1 - \phi_2 \right) } , \nabla(\varphi_{ 1 } - \phi_1)\right) \nonumber\\
	&- \left(\frac { a_2^2 | \nabla \phi_2 |^2 } { 36 (A(\phi_2))^2 } ,\varphi_{ 2 } - \phi_2\right)
	+ \left(  \frac { a_2^2 \nabla  \phi_2 } { 18 A(\phi_2 )}, \nabla(\varphi_{ 2 } - \phi_2)\right)\nonumber\\
	&+ \left(\frac { a_3^2 | \nabla \left( 1 - \phi_1 -     \phi_2 \right) |^2 } { 36 (A\left( 1 - \phi_1 - \phi_2 \right))^2 }, \varphi_{ 2 } - \phi_2\right)
	- \left(  \frac { a_3^2 \nabla \left( 1 - \phi_1 - \phi_2 \right) } { 18 A\left( 1 - \phi_1 - \phi_2 \right) } , \nabla(\varphi_{ 2 } - \phi_2)\right) . \nonumber
	\end{flalign}
	For $\hat{ E }_{e}$, a similar inequality is available:
	\begin{flalign}
	\hat{ E }_{e}(\varphi_{1},\varphi_{2})-\hat{ E }_{e}(\phi_{1},\phi_{2})
	\geq \left(-\frac{\pa}{\pa{\phi_1}}H(\phi_{1}, \phi_{2}),\varphi_{ 1 } - \phi_1\right)_{Q}
	+ \left(-\frac{\pa}{\pa{\phi_2}}H(\phi_{1}, \phi_{2}),\varphi_{ 2 } - \phi_2\right)_{Q} .
	\end{flalign}
	Combining the inequalities, we have
	\begin{flalign}
	\hat{ E }(\phi_{1},\phi_{2})-\hat{ E }(\varphi_{1},\varphi_{2})
	&= (\hat{ E }_{c}(\phi_{1},\phi_{2})-\hat{ E }_{c}(\varphi_{1},\varphi_{2}))
	-(\hat{ E }_{e}(\phi_{1},\phi_{2})-\hat{ E }_{e}(\varphi_{1},\varphi_{2}))\\
	&\leq \left(\frac{\pa}{\pa{\phi_1}}S(\phi_{1}, \phi_{2}), \phi_1 -\varphi_{ 1 } \right)_{Q}
	+( {\delta_{\phi_1}} \tilde{K} (\phi_{1}, \phi_{2}) ,\phi_1 -\varphi_{ 1 } ) \nonumber\\
	&+\left(\frac{\pa}{\pa{\phi_2}}S(\phi_{1}, \phi_{2}),\phi_2 -\varphi_{ 2 }\right)_{Q}
	+( {\delta_{\phi_2}} \tilde{K} (\phi_{1}, \phi_{2}) ,\phi_2 -\varphi_{ 2 } ) \nonumber\\
	&-\left(-\frac{\pa}{\pa{\phi_1}}H(\phi_{1}, \phi_{2}),\phi_1-\varphi_{ 1 }\right)_{Q}
	- \left(-\frac{\pa}{\pa{\phi_2}}H(\phi_{1}, \phi_{2}),\phi_2-\varphi_{ 2 }\right)_{Q} .
	\nonumber
	\end{flalign}
Consequently, the proof is completed.
\end{proof}

\begin{rem} 
The periodic boundary condition is considered in this article, for simplicity of presentation, since all the boundary integral terms will cancel, so that the integration by parts is always valid. As a result, for all the nonlinear and singular terms, the boundary terms will cause any scientific difficulty in the mathematical analysis. Meanwhile, the analysis in this work could be extended to other type of physically relevant boundary condition, such a homogeneous Neumann one. In fact, a natural boundary condition (corresponding to the Neumann boundary one) is more straightforward in the finite element set-up, and this extension analysis will be considered in the future works.  
\end{rem} 

\section{The unique solvability and positivity-preserving property}  \label{sec:positivity}

%The MMC-TDGL equation is fourth-order nonlinear equations. It is difficult to prove the  positivity preservation of the solution. Thus,
The mass lumped method, improving the original mass matrix,  provides us with an efficient way to derive the theoretical proof of preserving positivity property for the MMC-TDGL equations.

\begin{lem}\label{lem5}\cite{chen19a}
	Suppose that $\xi,\bar{\xi}\in S_{h}$, with $(\xi-\bar{\xi},1)=0$, that is, $\xi-\bar{\xi}\in \mathring{S}_h$, and assume that $\|\xi\|_{\infty}< 1, \|\bar{\xi}\|_{\infty} \leq M $. Then, we have the following estimate:
	\begin{equation}
	\left\|-\Delta_{h}^{-1}\left(\xi-\bar{\xi}\right)\right\|_{\infty} \leq C_{1},
	\end{equation}
	where $C_{1}> 0$ depends only upon $M$ and $\Omega$. In particular, $C_{1}$ is independent of the mesh spacing $h$ .
%	\begin{proof}
%		Notice that the $P_{ 1 } $ finite element method is equivalent to finite difference method in the structure mesh, we easily derive the desired results as stated in \cite{chen19a}.	
%	\end{proof}
\end{lem}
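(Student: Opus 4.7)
My plan is to reduce the discrete estimate to its continuous counterpart via elliptic regularity and then control the discretization error uniformly in the mesh size. Set $\psi_h := (-\Delta_h)^{-1}(\xi - \bar{\xi}) \in \mathring{S}_h$, so that by the definition of the mass-lumped discrete Laplacian,
$$(\nabla \psi_h, \nabla \chi) = (\xi - \bar{\xi}, \chi)_Q \qquad \forall \chi \in S_h.$$
The hypotheses yield $\|\xi - \bar{\xi}\|_\infty \le 1 + M$, so the right-hand side datum is uniformly bounded in $L^\infty$. Let $\psi \in H^1_{per}(\Omega) \cap L^2_0(\Omega)$ be the periodic mean-zero solution of the continuous Poisson problem $-\Delta \psi = \xi - \bar{\xi}$. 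Standard $W^{2,p}$ elliptic regularity on the flat torus gives $\|\psi\|_{W^{2,p}} \le C_p(1+M)|\Omega|^{1/p}$ for every $p \in (1,\infty)$, and Morrey's embedding $W^{2,p} \hookrightarrow C^0$ in two space dimensions then produces $\|\psi\|_\infty \le \widetilde{C}(M,\Omega)$, a bound independent of $h$.

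The second step is to transfer this estimate to $\psi_h$. Since $\psi_h$ is, up to the mass-lumping perturbation, the conforming Galerkin approximation to $\psi$, I would invoke a maximum-norm finite element error estimate of Rannacher--Scott type,
$$\|\psi - \psi_h\|_\infty \le C\,|\log h|\,\inf_{v_h \in S_h}\|\psi - v_h\|_\infty,$$
together with the nodal interpolation bound $\|\psi - I_h \psi\|_\infty \le C h^{2 - 2/p}\|\psi\|_{W^{2,p}}$. Choosing $p$ large enough that $h^{2 - 2/p}|\log h|$ remains uniformly bounded for all admissible $h$, the triangle inequality delivers $\|\psi_h\|_\infty \le \|\psi\|_\infty + \|\psi - \psi_h\|_\infty \le C_1(M,\Omega)$, as desired.

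The principal obstacle is accounting for the mass-lumping quadrature $(\cdot,\cdot)_Q$, which causes $\psi_h$ to differ from the standard Galerkin approximation. I would control this via the element-wise equivalence $\|\cdot\|_Q \sim \|\cdot\|_{L^2}$ on $S_h$ and the quadrature error bound $|(\eta,\chi) - (\eta,\chi)_Q| \le C h^2 \|\eta\|_{H^1}\|\chi\|_{H^1}$ on $S_h \times S_h$, verifying that the resulting perturbation is of higher order and is absorbed by the overall estimate. A cleaner alternative, which bypasses the $|\log h|$ factor altogether, is to work with the discrete Green's function $G_h$ of the mass-lumped periodic Laplacian: since the continuous Green's function on the two-dimensional torus satisfies $\int_\Omega |G(x,y)|\,dy \le C(\Omega)$ and its discrete counterpart enjoys a uniform $L^1$ stability bound, the representation $\psi_h(x) = (G_h(x,\cdot), \xi - \bar{\xi})_Q$ yields $\|\psi_h\|_\infty \le C(\Omega)(1 + M) =: C_1$ directly, making the independence of $C_1$ from $h$ transparent.
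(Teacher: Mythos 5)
The paper does not actually prove this lemma: it is quoted from \cite{chen19a}, where the analogous bound is established for a periodic \emph{finite-difference} Laplacian, essentially via a discrete elliptic-regularity estimate $\|\psi\|_{H^2_h}\le C\|\xi-\bar{\xi}\|_2\le C(1+M)|\Omega|^{1/2}$ combined with the two-dimensional discrete embedding of $H^2$ into $L^\infty$ --- an argument that does not transfer verbatim to the mass-lumped $P_1$ setting used here. Your proof takes a genuinely different, and in this context more pertinent, route: compare $\psi_h=(-\Delta_h)^{-1}(\xi-\bar{\xi})$ with the continuous periodic solution via $W^{2,p}$ regularity, then invoke $L^\infty$ quasi-optimality and absorb the lumping perturbation. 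The outline is sound, with a few points worth tightening. First, the $|\log h|$ quasi-optimality for piecewise linears is due to Schatz--Wahlbin (and Nitsche); Rannacher--Scott is the $W^{1,\infty}$ statement --- a citation quibble, not a gap. Second, note that compatibility of both problems is automatic because the lumped quadrature is exact on $S_h$, so $(\xi-\bar{\xi},1)_Q=(\xi-\bar{\xi},1)=0$. Third, your quadrature-perturbation step can be made quantitative: with $\xi-\bar{\xi}\in S_h$, the elementwise bound $|(\eta,\chi)-(\eta,\chi)_Q|\le Ch^2\|\nabla\eta\|\,\|\nabla\chi\|$, the inverse inequality $\|\nabla(\xi-\bar{\xi})\|\le Ch^{-1}\|\xi-\bar{\xi}\|$, and the 2D discrete Sobolev inequality give $\|\psi_h-\tilde{\psi}_h\|_\infty\le Ch|\log h|^{1/2}(1+M)$ for the difference from the standard Ritz--Galerkin approximation, which is uniformly bounded as required. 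Two caveats: your $C_1$ necessarily also depends on the shape-regularity and quasi-uniformity constants of the mesh family (needed both for the inverse estimate and for the max-norm theory), which is left implicit in the paper's ``depends only on $M$ and $\Omega$''; and the ``cleaner alternative'' via a uniform $L^1$ bound for the discrete Green's function is not actually cheaper --- that bound is a nontrivial result of the same depth as the Schatz--Wahlbin machinery and should be cited, not asserted. Finally, your choice of machinery is genuinely necessary and not an overcomplication: the elementary energy estimate $\|\nabla\psi_h\|\le C(1+M)$ only yields $\|\psi_h\|_\infty\le C|\log h|^{1/2}(1+M)$ in two dimensions, which is not uniform in $h$, so some duality/regularity or Green's-function argument is unavoidable in the finite element setting.
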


\begin{lem}\label{lem:l1}
Let $\phi,\psi\in S_h$ and $A(\psi)>0$, then
\begin{flalign}
\left(-\frac{|\nabla \phi|^2}{36(A(\phi))^2},\psi\right)+\left(\frac{\nabla \phi}{18A(\phi)},\nabla \psi\right)&\le \frac{1}{36}\left(\frac{\nabla \psi}{A(\psi)},\nabla \psi\right),\\
\left(\frac{|\nabla \phi|^2}{36(A(\phi))^2},\psi\right)-\left(\frac{\nabla \phi}{18A(\phi)},\nabla \psi\right)&\le \left(\frac{|\nabla \phi|^2}{18(A(\phi))^2},A(\psi)\right)+\frac{1}{36}\left(\frac{\nabla \psi}{A(\psi)},\nabla \psi\right).
\end{flalign}
\end{lem}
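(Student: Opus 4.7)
The plan is to reduce both inequalities to pointwise (element-wise) perfect-square bounds. I would exploit the piecewise-linear structure of $S_h$: for any $\phi, \psi \in S_h$ and every triangle $e \in \mathcal{T}_h$, the quantities $\nabla \phi|_e$, $\nabla \psi|_e$, $A(\phi)|_e$, $A(\psi)|_e$ are all constants on $e$. Moreover, because $\psi$ is linear on $e$, one has $\int_e \psi\, d\mathbf{x} = \triangle_e\, A(\psi)|_e$. This identity lets me rewrite every inner product appearing in the lemma as a sum over elements of $\triangle_e$ times an algebraic expression in $\nabla \phi|_e$, $\nabla \psi|_e$, $A(\phi)|_e$, $A(\psi)|_e$. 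After this reduction, it suffices to prove the two inequalities element by element.

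On a single triangle, cancelling the common factor $\triangle_e/36$ and multiplying out, the first inequality becomes
\[
\frac{|\nabla\phi|^2\, A(\psi)}{A(\phi)^2} - \frac{2\, \nabla\phi\cdot\nabla\psi}{A(\phi)} + \frac{|\nabla\psi|^2}{A(\psi)} \ge 0,
\]
while the second becomes
\[
\frac{|\nabla\phi|^2\, A(\psi)}{A(\phi)^2} + \frac{2\, \nabla\phi\cdot\nabla\psi}{A(\phi)} + \frac{|\nabla\psi|^2}{A(\psi)} \ge 0.
\]
Under the substitution $\vec{u} = \sqrt{A(\psi)}\,\nabla\phi / A(\phi)$ and $\vec{v} = \nabla\psi / \sqrt{A(\psi)}$, which is well-defined because $A(\psi)>0$ by hypothesis and $A(\phi)>0$ in the positivity-preserving context where $\phi$ has positive vertex values, these are precisely $|\vec{u}-\vec{v}|^2 \ge 0$ and $|\vec{u}+\vec{v}|^2 \ge 0$, both trivial.

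Summing the element-wise bounds weighted by $\triangle_e$ over $\mathcal{T}_h$ then recovers the global statement. The main obstacle --- really a bookkeeping step more than an obstacle --- is tracking the factors $1/36$, $1/18$, and the $2$ that appears when one expands the squares, so that the numerical constants on both sides of the stated inequalities line up exactly. Once the quadratic structure is recognized and the substitution $(\vec{u},\vec{v})$ is made, no further analytical machinery is required; the proof rests entirely on the piecewise-constant nature of $\nabla\phi$, $\nabla\psi$, $A(\phi)$, $A(\psi)$ combined with the elementary identity $|\vec{u}\pm\vec{v}|^2\ge 0$.
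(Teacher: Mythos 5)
Your proof is correct and takes essentially the same route as the paper's: both exploit the piecewise-constant nature of $\nabla\phi$, $\nabla\psi$, $A(\phi)$, $A(\psi)$ together with the identity $\int_e \psi\, d\mathbf{x} = \triangle_e\, A(\psi)|_e$ to reduce everything to an element-wise quadratic inequality, and your completion of the squares $|\vec{u}\mp\vec{v}|^2 \ge 0$ is precisely the Cauchy--Schwarz-plus-Young step the paper applies on each triangle. One cosmetic note: you invoke $A(\phi)>0$ from the positivity-preserving context, but the argument (like the paper's) needs only $A(\phi)\neq 0$ for the expressions to be defined, since the substitution $\vec{u}=\sqrt{A(\psi)}\,\nabla\phi/A(\phi)$ and the resulting identities are valid for $A(\phi)$ of either sign.
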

\begin{proof} By Cauchy-Schwarz inequality, on every element $e\in {\mathcal{T}}_h$, one gets
\begin{flalign}
\left|\left(\frac{\nabla \phi}{18A(\phi)},\nabla \psi\right)_e\right|
&\le \left(\frac{|\nabla \phi|^2}{36(A(\phi))^2},A (\psi) \right)_e^{\frac{1}{2}} \left(\frac{\nabla\psi}{9A(\psi)},\nabla \psi\right)_e^{\frac{1}{2}} \nonumber
\\
&\le \left(\frac{|\nabla \phi|^2}{36(A(\phi))^2},A(\psi)\right)_e  +\left(\frac{\nabla\psi}{36A(\psi)},\nabla \psi\right)_e.
\end{flalign}
Summing over all $e$,
\begin{flalign}\label{lem:l1:i1}
\left|\left(\frac{\nabla \phi}{18A(\phi)},\nabla \psi\right)\right|
\le \left(\frac{|\nabla \phi|^2}{36(A(\phi))^2},A(\psi)\right)  +\left(\frac{\nabla\psi}{36A(\psi)},\nabla \psi\right).
\end{flalign}
Moreover, note that for $\phi,\psi\in S_h$,
$$
\left(-\frac{|\nabla \phi|^2}{36(A(\phi))^2},\psi\right) =\left(-\frac{|\nabla \phi|^2}{36(A(\phi))^2},A(\psi)\right),
$$
then the lemma can be proved by combining this relationship with \eqref{lem:l1:i1}.
\end{proof}

\begin{rem}\label{rmk:r1}
If $A(\psi)\ge 0$, then Lemma~\ref{lem:l1} will be modified by 
\begin{flalign}
\left(-\frac{|\nabla \phi|^2}{36(A(\phi))^2},\psi\right)+\left(\frac{\nabla \phi}{18A(\phi)},\nabla \psi\right)&\le \frac{1}{36}\sum_{\stackrel{e\in {\mathcal T}_h}{A(\psi)>0} }\left(\frac{\nabla \psi}{A(\psi)},\nabla \psi\right)_e,\\
\left(\frac{|\nabla \phi|^2}{36(A(\phi))^2},\psi\right)-\left(\frac{\nabla \phi}{18A(\phi)},\nabla \psi\right)&\le \left(\frac{|\nabla \phi|^2}{18(A(\phi))^2},A(\psi)\right)+\frac{1}{36}\sum_{\stackrel{e\in {\mathcal T}_h}{A(\psi)>0} } \left(\frac{\nabla \psi}{A(\psi)},\nabla \psi\right)_{e}.
\end{flalign}
\end{rem}

\begin{lem}\label{lem:l2}
For any $\phi\in S_h$, if $A(\phi)>0$ on one element $e\in {\mathcal T}_h$ with mesh size $h_e$, then we have
\begin{equation}
\frac{|\nabla \phi|}{A(\phi)}\le \frac{3\sqrt{2}h_e}{2\triangle_e}
\end{equation}
on the element $e$.
\end{lem}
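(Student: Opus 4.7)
The plan is to work with the barycentric coordinates $\lambda_\alpha,\lambda_\beta,\lambda_\gamma$ attached to the three vertices of $e$, exploiting that $\phi$ is affine on $e$. First I would write $\phi|_e = \sum_k \phi_k \lambda_k$, so that $\nabla\phi|_e = \sum_k \phi_k \nabla\lambda_k$ is a constant vector on the element. The decisive algebraic move is to use $\sum_k \nabla\lambda_k = \mathbf 0$ (which follows from $\sum_k \lambda_k \equiv 1$) to recenter the expansion as
\[
\nabla\phi = \sum_{k}\bigl(\phi_k - A(\phi)\bigr)\nabla\lambda_k,
\]
reducing the problem to controlling the deviations of the vertex values from their mean.

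Next I would apply Cauchy--Schwarz to split
\[
|\nabla\phi|^2 \le \Bigl(\sum_k (\phi_k - A(\phi))^2\Bigr)\Bigl(\sum_k |\nabla\lambda_k|^2\Bigr).
\]
The geometric factor is routine: the standard identity $|\nabla\lambda_k| = |\tilde e_k|/(2\triangle_e)$, where $\tilde e_k$ is the edge opposite vertex $k$, together with $|\tilde e_k|\le h_e$, yields $\sum_k |\nabla\lambda_k|^2 \le 3h_e^2/(4\triangle_e^2)$. Combining this with the algebraic bound $\sum_k(\phi_k - A(\phi))^2 \le 6 A(\phi)^2$ gives $|\nabla\phi|^2 \le 9 A(\phi)^2 h_e^2/(2\triangle_e^2)$, which is exactly the claim after taking square roots.

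The step I expect to be the main obstacle is the inequality $\sum_k(\phi_k - A(\phi))^2 \le 6 A(\phi)^2$. Using the identity
\[
\sum_k (\phi_k - A(\phi))^2 \;=\; \sum_k \phi_k^2 - 3A(\phi)^2 \;=\; 6A(\phi)^2 - 2\!\sum_{i<j}\phi_i\phi_j,
\]
the bound is equivalent to the quadratic condition $\sum_{i<j}\phi_i\phi_j \ge 0$, and is attained sharply at configurations of the form $(\phi_\alpha,\phi_\beta,\phi_\gamma) = (3A(\phi),0,0)$. The inequality holds whenever the three vertex values of $\phi$ on $e$ share a common sign; in the positivity-preserving setting in which this lemma will be invoked one has $\phi_k\ge 0$ at every vertex of $e$, and $A(\phi)>0$ then delivers the stated estimate. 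Without some such sign hypothesis, $A(\phi)>0$ alone is not enough, as one sees from configurations like $(\phi_\alpha,\phi_\beta,\phi_\gamma)=(M,-M+\varepsilon,\varepsilon)$ with arbitrary $M$ and small positive mean; I therefore view the vertex nonnegativity as an implicit standing assumption here, consistent with how the lemma is applied in the sequel.
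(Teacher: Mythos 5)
Your proof is correct and lands on exactly the paper's constant, but by a genuinely different route. The paper's own argument is a direct Cartesian computation: it writes out the gradient of the piecewise linear function explicitly, $\partial_x\phi = \frac{1}{2\triangle_e}\bigl(\phi(P_1)(y_2-y_3)+\phi(P_2)(y_3-y_1)+\phi(P_3)(y_1-y_2)\bigr)$ and similarly for $\partial_y\phi$, bounds each coordinate difference by $h_e$, and concludes $|\nabla\phi|\le \frac{\sqrt{2}\,h_e}{2\triangle_e}\bigl(\phi(P_1)+\phi(P_2)+\phi(P_3)\bigr)=\frac{3\sqrt{2}\,h_e}{2\triangle_e}A(\phi)$ in two lines. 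You instead expand in barycentric coordinates, recenter via $\sum_k\nabla\lambda_k=\mathbf{0}$, and combine Cauchy--Schwarz with the geometric identity $|\nabla\lambda_k|=|\tilde e_k|/(2\triangle_e)$ and the variance bound $\sum_k(\phi_k-A(\phi))^2\le 6A(\phi)^2$; both routes yield the identical constant, so neither is sharper, and the paper's is the shorter path. What your version buys is precision about hypotheses: your identity shows the variance bound is equivalent to $\sum_{i<j}\phi_i\phi_j\ge 0$, and your counterexample $(M,-M+\varepsilon,\varepsilon)$ validly demonstrates that $A(\phi)>0$ alone cannot support the stated estimate. This diagnosis applies equally to the paper: the step passing from $\bigl|\phi(P_1)(y_2-y_3)+\cdots\bigr|$ to $h_e\bigl(\phi(P_1)+\phi(P_2)+\phi(P_3)\bigr)$ silently drops absolute values, which requires $\phi(P_k)\ge 0$ at the three vertices of $e$, so the paper's proof carries the same tacit nonnegativity assumption you made explicit. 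In the paper's applications the assumption is essentially harmless, since the lemma is invoked (through Lemma~\ref{lem:l3}) for functions whose nodal values are nonnegative on the elements where the element average is positive, e.g.\ $\phi_1^{\star}$, $\phi_2^{\star}$, $1-\phi_1^{\star}-\phi_2^{\star}$ on $A_{h,\delta}$, and the hat-function directions at least when $P_{\alpha_0}$ and $P_{\alpha_1}$ do not share an element; stating vertex nonnegativity as an explicit hypothesis of the lemma, as you propose, is the correct repair and would make both proofs literally complete.
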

\begin{proof} Let $P_i=(x_i,y_i)$ $(i=1,2,3)$ be the three
vertex points of $e$, then
\begin{flalign*}
 \frac{\partial \phi}{\partial x} & =\frac{1}{2\triangle_e}( \phi(P_1)(y_2-y_3)+\phi(P_2)(y_3-y_1)+\phi(P_3)(y_1-y_2)),\\
\frac{\partial \phi}{\partial y} &=\frac{1}{2\triangle_e}( \phi(P_1)(x_3-x_2)+\phi(P_2)(x_1-x_3)+\phi(P_3)(x_2-x_1)).
\end{flalign*}
So $\nabla \phi$ can be bounded by
$$
|\nabla \phi|\le \frac{\sqrt{2}h_e}{2\triangle_e}( \phi(P_1)+ \phi(P_2) +\phi(P_3)).
$$
Note that
$$
  A(\phi)=\frac{1}{3}\left(\phi(P_1)+ \phi(P_2) +\phi(P_3)\right),
$$
then the lemma is proved.
\end{proof}

Subsequently, a combination of Lemma ~\ref{lem:l1} and Lemma ~\ref{lem:l2} leads to the following result.

\begin{lem}\label{lem:l3}
Let $\phi,\psi\in S_h$ and $A(\psi)\ge 0$,  then
\begin{flalign}
\left(-\frac{|\nabla \phi|^2}{36(A(\phi))^2},\psi\right)+\left(\frac{\nabla \phi}{18A(\phi)},\nabla \psi\right)&\le \frac{C_{\mathcal T}}{8}
\sum_{e\in {\mathcal T}_h}A(\psi)|_e,\\
\left(\frac{|\nabla \phi|^2}{36(A(\phi))^2},\psi\right)-\left(\frac{\nabla \phi}{18A(\phi)},\nabla \psi\right)&\le \frac{3C_{\mathcal T}}{8}
\sum_{e\in {\mathcal T}_h}A(\psi)|_e.
\end{flalign}
\end{lem}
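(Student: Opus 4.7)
The plan is to combine the element-wise version of Lemma~\ref{lem:l1} (stated in Remark~\ref{rmk:r1} for the relaxed hypothesis $A(\psi)\ge 0$) with the pointwise gradient estimate of Lemma~\ref{lem:l2}, and then collapse the resulting mesh quantities using the shape-regularity bound $h_e^2/\triangle_e \le C_{\mathcal T}$. Both inequalities reduce, after applying Remark~\ref{rmk:r1}, to bounding sums of element-level terms of the form $(\nabla\psi/A(\psi),\nabla\psi)_e$ and $(|\nabla\phi|^2/(A(\phi))^2,A(\psi))_e$ on those elements where the relevant averages are positive; the rest of the argument is a direct per-element calculation.

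For the first inequality, I would first invoke Remark~\ref{rmk:r1} to get
$$
\left(-\frac{|\nabla \phi|^2}{36(A(\phi))^2},\psi\right)+\left(\frac{\nabla \phi}{18A(\phi)},\nabla \psi\right)\le \frac{1}{36}\sum_{\stackrel{e\in \mathcal T_h}{A(\psi)>0}}\left(\frac{\nabla\psi}{A(\psi)},\nabla\psi\right)_e.
$$
On an element $e$ where $A(\psi)>0$, Lemma~\ref{lem:l2} applied to $\psi$ gives $|\nabla\psi|/A(\psi)\le 3\sqrt{2}\,h_e/(2\triangle_e)$, hence $|\nabla\psi|^2/A(\psi)\le 9h_e^2/(2\triangle_e^2)\cdot A(\psi)$. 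Multiplying by $\triangle_e$ and using shape regularity produces $(\nabla\psi/A(\psi),\nabla\psi)_e\le (9/2)C_{\mathcal T}\,A(\psi)|_e$. Dividing by $36$ and extending the sum to all elements (the excluded ones contribute zero since $A(\psi)\ge 0$) yields the factor $C_{\mathcal T}/8$ exactly.

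For the second inequality, Remark~\ref{rmk:r1} produces the same gradient sum, which is again bounded by $(C_{\mathcal T}/8)\sum_e A(\psi)|_e$, plus the extra term $(|\nabla\phi|^2/(18(A(\phi))^2),A(\psi))$. On each element where $A(\phi)>0$, Lemma~\ref{lem:l2} applied to $\phi$ gives $|\nabla\phi|^2/(A(\phi))^2\le 9h_e^2/(2\triangle_e^2)$, so the element contribution is at most $(h_e^2/(4\triangle_e))A(\psi)|_e\le (C_{\mathcal T}/4)A(\psi)|_e$. Adding $C_{\mathcal T}/4$ and $C_{\mathcal T}/8$ gives $3C_{\mathcal T}/8$, matching the stated bound. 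The only real bookkeeping concern, and the one I expect to be the main (minor) obstacle, is to ensure the element-wise restriction $A(\psi)>0$ coming from Remark~\ref{rmk:r1} is compatible with the implicit assumption that $A(\phi)>0$ wherever the integrands involving $A(\phi)$ appear; on any element where $A(\psi)=0$ the relevant term vanishes, so extending all partial sums to a full sum over $\mathcal T_h$ is harmless and the inequality is preserved.
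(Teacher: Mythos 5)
Your proof is correct and takes essentially the same route as the paper: both combine Remark~\ref{rmk:r1} (the $A(\psi)\ge 0$ version of Lemma~\ref{lem:l1}) with the element-wise gradient bound of Lemma~\ref{lem:l2} and the shape-regularity estimate $h_e^2/\triangle_e\le C_{\mathcal T}$, exploiting that $\nabla\psi$ and $A(\psi)$ are constant on each element. Your per-element constants ($\tfrac{1}{36}\cdot\tfrac{9}{2}=\tfrac18$ for the gradient sum and $\tfrac18+\tfrac14=\tfrac38$ for the second bound) match the paper's computation exactly, and your remark that elements with $A(\psi)=0$ contribute nothing, so the restricted sums extend harmlessly to all of $\mathcal{T}_h$, is precisely the point the paper handles implicitly.
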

\begin{proof} By Lemma ~\ref{lem:l2} and noticing that $A(\psi)$ is constant on every element $e$, we get
\begin{flalign*}
  \frac{1}{36}\sum_{\stackrel{e\in {\mathcal T}_h}{A(\psi)>0} } \left(\frac{\nabla \psi}{A(\psi)},\nabla \psi\right) & = \frac{1}{36}\sum_{\stackrel{e\in {\mathcal T}_h}{A(\psi)>0} } \left(\frac{|\nabla \psi|^2}{(A(\psi))^2},A(\psi)\right)
  \le \frac{1}{36}\sum_{e\in {\mathcal T}_h}\frac{9h_e^2}{2\triangle_e^2}(A(\psi),1)_e
  = \frac{1}{8}\sum_{e\in {\mathcal T}_h}\frac{h_e^2}{\triangle_e}A(\psi)|_e.
\end{flalign*}
Similarly,
$$
\left(\frac{|\nabla \phi|^2}{18(A(\phi))^2},A(\psi)\right) \le \frac{1}{4}\sum_{e\in {\mathcal T}_h}\frac{h_e^2}{\triangle_e}A(\psi)|_e.
$$
Since the element is shape regular, $\frac{h_e^2}{\triangle_e}\le C_{\mathcal T}$, now the lemma is proved by using Lemma ~\ref{lem:l1} and Remark \ref{rmk:r1}.
\end{proof}

\begin{theorem} \label{thm:positivity}
Given $\phi_{1}^{n},\phi_{2}^{n}\in S_{h}$, with $0<\phi_{1}^{n},\phi_{2}^{n}<1$, $0<\phi_{1}^{n}+\phi_{2}^{n}<1$, (so that $0<\overline{\phi_{1}^{n}},\overline{\phi_{2}^{n}}<1$), there exists a unique solution $\phi_{1}^{n+1},\phi_{2}^{n+1}\in S_{h}$ to \eqref{Mass-lumped scheme}, with $\overline{\phi_{1}^{n+1}}=\overline{\phi_{1}^{n}}$, $\overline{\phi_{2}^{n+1}}=\overline{\phi_{2}^{n}}$, $0<\phi_{1}^{n+1}, \phi_{2}^{n+1}<1$, and $0<\phi_{1}^{n+1}+\phi_{2}^{n+1}<1$.
\end{theorem}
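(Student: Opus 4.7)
The plan is to recast the fully discrete system \eqref{Mass-lumped scheme} as the Euler--Lagrange equations of a strictly convex minimization problem on an open, convex set of admissible nodal values, and to prove that the unique minimizer must lie in the interior; this interior condition is exactly the positivity-plus-Gibbs-triangle statement of the theorem. Eliminate the chemical potentials $\mu_{ih}^{n+1}$ by inverting $-\Delta_h$ on $\mathring S_h$, and define
\[
\mathcal A_h^n := \{(\phi_1,\phi_2)\in S_h\times S_h:\ \phi_1(P_j)>0,\ \phi_2(P_j)>0,\ \phi_1(P_j)+\phi_2(P_j)<1\ \forall j,\ \overline{\phi_i}=\overline{\phi_i^n}\},
\]
together with the functional
\[
\mathcal J^n(\phi_1,\phi_2) := \sum_{i=1}^{2}\frac{\|\phi_i-\phi_i^n\|_{-1,Q}^2}{2\tau D_i}\ +\ \hat E_c(\phi_1,\phi_2)\ +\ \sum_{i=1}^{2}\left(\tfrac{\partial H}{\partial \phi_i}(\phi_1^n,\phi_2^n),\phi_i\right)_Q.
\]
A direct computation shows that critical points of $\mathcal J^n$ on $\mathcal A_h^n$ correspond precisely to solutions of \eqref{Mass-lumped scheme} (with $\mu_{ih}^{n+1}$ determined up to the additive constant left free by testing in $\mathring S_h$), and the mass-conservation constraint $\overline{\phi_i^{n+1}}=\overline{\phi_i^n}$ is built into $\mathcal A_h^n$.

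Next, I show strict convexity of $\mathcal J^n$ on $\mathcal A_h^n$. The $\|\cdot\|_{-1,Q}^2$ terms are strictly convex on $\mathring S_h$; the decomposition lemma for $\hat E_c$ gives convexity, and the nodal entropy contribution $(S(\phi_1,\phi_2))_Q$ adds a pointwise positive-definite Hessian inside the Gibbs triangle; the $H^n$ terms are affine. Hence any minimizer on $\mathcal A_h^n$ is unique. Observe also that $\mathcal J^n$ extends continuously to the closure $\overline{\mathcal A_h^n}$ (since $u\ln u\to 0$ as $u\to 0^+$ and $\hat E_c$ stays finite), so the infimum is attained on this compact set.

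The decisive step is showing that the minimizer cannot lie on $\partial\mathcal A_h^n$. I argue by contradiction: suppose a minimizer $(\phi_1^{\star},\phi_2^{\star})\in\partial\mathcal A_h^n$, so that at some vertex $P_{j_0}$ one of $\phi_1^{\star}(P_{j_0})$, $\phi_2^{\star}(P_{j_0})$, $1-\phi_1^{\star}(P_{j_0})-\phi_2^{\star}(P_{j_0})$ vanishes. Choose a mean-zero perturbation $\psi\in\mathring S_h$ supported essentially in the star of $P_{j_0}$ (for example, $\chi_{j_0}$ minus a small correcting multiple of a fixed mean-one element), pick the sign so that the entropy derivative at $P_{j_0}$ multiplied by $\psi(P_{j_0})$ has the singular sign, and compute the one-sided directional derivative $\tfrac{d}{ds}\big|_{s=0^+}\mathcal J^n(\phi_1^{\star}+s\psi,\phi_2^{\star})$ (with analogous variants for the other two boundary cases, including $(-\psi,-\psi)$ when $\phi_3^\star(P_{j_0})=0$). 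The entropy factor $\partial_{\phi_1}S(\phi_1^{\star},\phi_2^{\star})$ contains the singular contribution $\gamma^{-1}\ln(\alpha\phi_1^\star(P_{j_0})/\gamma)$ and/or $-\ln(1-\phi_1^\star(P_{j_0})-\phi_2^\star(P_{j_0}))$, which, paired against $\psi$ in the lumped inner product, drives the directional derivative toward $-\infty$.

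The main obstacle is bounding, uniformly up to the boundary, every other contribution to this directional derivative so that the logarithmic divergence cannot be cancelled. The $H^{-1}_Q$-norm term contributes $(\psi,(-\Delta_h)^{-1}(\phi_1^\star-\phi_1^n))_Q/(\tau D_1)$, which is finite by Lemma~\ref{lem5} applied with $\|\phi_1^n\|_\infty<1$ and $\|\phi_1^\star\|_\infty\le 1$. The linear $H^n$ piece is trivially bounded. The genuinely dangerous contribution is the nonlinear deGennes piece $(\delta_{\phi_i}\tilde K(\phi_1^\star,\phi_2^\star),\psi)$, whose denominators $A(\phi_i^\star)$ and $A(1-\phi_1^\star-\phi_2^\star)$ become small on elements adjacent to $P_{j_0}$. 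Precisely here Lemma~\ref{lem:l3} is decisive: setting the roles of $\phi$ and $\psi$ appropriately (and using that $A(|\psi|)$ is supported on the star of $P_{j_0}$), the absolute value of this contribution is dominated by $\tfrac{3C_{\mathcal T}}{8}\sum_{e\in\mathcal T_h}A(|\psi|)|_e$, a quantity of order $h^2$ that remains uniformly bounded as $(\phi_1^\star,\phi_2^\star)\to\partial\mathcal A_h^n$. Summing all contributions, the directional derivative equals $-\infty$, contradicting the assumed optimality. Hence the minimizer lies in $\mathcal A_h^n$, which yields the strict inequalities $0<\phi_i^{n+1}$ and $\phi_1^{n+1}+\phi_2^{n+1}<1$; strict convexity then forces uniqueness, completing the proof.
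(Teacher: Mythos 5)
Your overall strategy coincides with the paper's: the authors minimize exactly your functional $\mathcal{J}^n$ (their $\mathcal{J}_h^n$) over the same mass-constrained admissible set, use Lemma~\ref{lem5} for the $\|\cdot\|_{-1,Q}$ contribution, Lemma~\ref{lem:l3} for the deGennes contribution, the a-priori bound $0<\phi_i^n<1$ for the concave $H$ term, and let the singular logarithm force the minimizer off the boundary, with strict convexity giving uniqueness. The one structural difference is that the paper does not work on the full closed set with a continuously extended energy and a one-sided derivative equal to $-\infty$; it minimizes over the compact shrunken set $A_{h,\delta}$ (where $\phi_1,\phi_2\geq g(\delta)$ and $\delta\leq\phi_1+\phi_2\leq 1-\delta$), on which the functional is smooth, and shows the directional derivative is finite and strictly negative at any boundary point once $g(\delta)$ and $\delta$ are chosen explicitly small (e.g.\ $g(\delta)=\bigl(\delta\exp\bigl(-\tfrac{3(r_0+1)}{\operatorname{area}(D_{\alpha_0})}\bigr)\bigr)^{\gamma}$). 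This avoids the $0/0$ issues in $\delta_{\phi_i}\tilde{K}$ on elements where $A(\phi_i^\star)=0$, which your extended-energy version would additionally have to address before the formula for the directional derivative is even valid.

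There is, however, a genuine gap in your construction of the perturbation direction, and it sits at the heart of the argument. Taking $\psi=\chi_{j_0}$ minus a small multiple of a \emph{fixed} mean-one element fails on two counts. First, admissibility: the correcting part of $\psi$ is negative on its whole support, so if $\phi_1^\star$ vanishes (or $\phi_1^\star+\phi_2^\star$ equals $1$) at any node in that support, the ray $\phi_1^\star+s\psi$ exits $\overline{\mathcal{A}_h^n}$ for every $s>0$, the entropy is undefined there, and the one-sided derivative you want to evaluate does not exist. Second, even when the ray is admissible, the logarithm at the nodes where $\psi<0$ enters with the \emph{opposite} sign: if $\phi_1^\star$ is also near $0$ at such a node, you obtain a $+\infty$ contribution competing with your $-\infty$, and the sign of the derivative is indeterminate. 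The paper's remedy is precisely the missing idea: take $\psi=\delta_{\alpha_0}-C_2\delta_{\alpha_1}$ with $C_2=\operatorname{area}(D_{\alpha_0})/\operatorname{area}(D_{\alpha_1})$ (so $(\psi,1)_Q=0$), where $\alpha_1$ is a node at which $\phi_1^\star$ (in Cases~1--2), respectively $\phi_1^\star+\phi_2^\star$ (in Cases~3--4), attains its extremum; mass conservation then gives the quantitative interior bounds $\phi_{1,\alpha_1}^\star\geq\overline{\phi_1^0}>0$, respectively $\phi_{1,\alpha_1}^\star+\phi_{2,\alpha_1}^\star\leq\overline{\phi_1^0}+\overline{\phi_2^0}<1$, which simultaneously keep the ray admissible and control the logarithm at $\alpha_1$ by a constant depending only on the initial means. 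A related repair is needed in your use of Lemma~\ref{lem:l3}: its hypothesis $A(\psi)\geq 0$ fails for a mean-zero $\psi$, so, as in the paper, you must split $\psi$ into its positive and negative nodal-basis parts and apply the two inequalities of the lemma separately (your bound $\tfrac{3C_{\mathcal T}}{8}\sum_e A(|\psi|)|_e$ is, incidentally, $O(1)$ rather than $O(h^2)$, though uniform boundedness is all that is needed). With these corrections your argument becomes the paper's proof.
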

\begin{proof}
	It is observed that, the numerical solution of \eqref{Mass-lumped scheme} is a minimizer of the following discrete energy functional with respect to $\phi_{1}$ and $\phi_{2}$
	\begin{equation}
	\begin{aligned}
	\mathcal{J}_{h}^{n}\left(\phi_{1}, \phi_{2}\right)
	&=\frac{1}{2 D_1\tau}\left\|\phi_{1}-\phi_{1}^{n}\right\|_{-1, Q}^{2}
	+\frac{1}{2 D_2\tau}\left\|\phi_{2}-\phi_{2}^{n}\right\|_{-1, Q}^{2}
	+\left( S\left(\phi_{1}, \phi_{2}\right), 1\right)_{Q} \\
	&+( \tilde{K}\left(\phi_{1}, \phi_{2}\right), 1 )
	+\left(\delta_{\phi_{1}} H\left(\phi_{1}^{n}, \phi_{2}^{n}\right), \phi_{1}\right)_{Q} 
	+\left(\delta_{\phi_{2}} H\left(\phi_{1}^{n}, \phi_{2}^{n}\right), \phi_{2}\right)_{Q} ,
	\end{aligned}
	\end{equation}
	over the admissible set
	\begin{equation}
	\begin{aligned}
	A_{h}:=\{\left(\phi_{1}, \phi_{2}\right) \in S_{h} \times S_{h} \mid \
	& \phi_{1}, \phi_{2} \geq 0, 0 \leq \phi_{1}+\phi_{2} \leq 1,\\
	& \left( \phi_1 - \bar{ \phi_1^{ 0 }}, 1 \right)_{Q} = 0, \quad
	  \left( \phi_2 - \bar{ \phi_2^{ 0 }}, 1 \right)_{Q} = 0\} \subset { \R ^{2 N_{p}^{ 2 }}} .
	\end{aligned}
	\end{equation}
	It is easy to observe that $\mathcal{ J}_{h}^{n}$ is a strictly convex functional with respect to $\phi_{1}$ and $\phi_{2}$ over
	this domain.
	Consider the following closed domain:
	\begin{equation}
	\begin{aligned}
	A_{h,\delta}:=\{\left(\phi_{1}, \phi_{2}\right) \in S_{h} \times S_{h} \mid\
	& \phi_{1}, \phi_{2} \geq g(\delta),\delta \leq \phi_{1}+\phi_{2} \leq 1-\delta,\\
	& \left( \phi_1 - \bar{ \phi_1^{ 0 }}, 1 \right)_{Q} = 0, 
      \left( \phi_2 - \bar{ \phi_2^{ 0 }}, 1 \right)_{Q}= 0 \} \subset { \R ^{2 N_{p}^{ 2 }}} .
	\end{aligned}
	\end{equation}
	Since $A_{h,\delta}$ is a bounded, compact and convex set in the following hyperplane $V$ in $\R ^{2 N_{p}^{2}}$,
	with dimension $2N_{p}^{2}-2$:
	\begin{equation}
	V=\left\{\left(\phi_{1}, \phi_{2}\right): \frac{1}{|\Omega|}( \phi_1, 1 )_{Q} = \overline{ \phi_1^{ 0 }}, \, \, \,
	\frac{1}{|\Omega|}( \phi_2, 1 )_{Q} = \overline{ \phi_2^{ 0 }}\right\},
	\end{equation}
	there exists a (may not unique) minimizer of $\mathcal{J}_{h}^{n}(\phi_{1},\phi_{2})$ over $A_{h,\delta}$. The key point of the
	positivity analysis is that, such a minimizer could not occur on the boundary points (in
	$V$) if $\delta$ and $g(\delta)$ are small enough.
	
	Assume the minimizer of $\mathcal{J}_{h}^{n}(\phi_{1},\phi_{2})$ occurs at a boundary point of $A_{h,\delta}$.
	
	Case 1: We set the minimization point as $(\phi_{1}^{\star},\phi_{2}^{\star})$, with $\phi_{1}^{\star}({P_{\alpha_{0}}}) :=\phi_{1,\alpha_{0}}^{\star}=g(\delta)$. In
	addition, we assume that $\phi_{1}^{\star}$ reaches the maximum value at $\alpha_{1}$, so it is obvious that $\phi_{1,\alpha_{1}}^{\star}\geq \overline{\phi_{1}^{\star}}=\overline{\phi_{1}^{0}}$. We can view the variable $\phi_{1,\alpha_{1}}$ as the $N_{p}^{2}$-th one
	in the hyperplane $V$, with the condition
	\begin{equation}
	\phi_{1,\alpha_{1}}=\frac{(\overline{\phi_{1}^{0}},1)_{Q} - \sum_{i\neq\alpha_{1}}^{N_{p}} (\phi_{1,i},\chi_{i})_{Q}}
	{(\chi_{\alpha_{1}},1)_{Q}}.
	\end{equation}
	In more details, we denote the following alternate function
	\begin{equation}
	\mathcal{U}_{h}^{n}\left(\left.\left(\phi_{1, i}\right)\right|_{i \neq \alpha_{1}}, \phi_{2}\right)
	:=\mathcal{J}_{h}^{n}\left(\cdot,\left(\phi_{1}\right)_{\alpha_{1}}, \phi_{2}\right)
	=\mathcal{J}_{h}^{n}\left(\cdot, \frac{(\overline{\phi_{1}^{0}},1)_{Q}-\sum_{i \neq \alpha_{1}}^{N_{p} }(\phi_{1,i},\chi_{i})_{Q}}{(\chi_{\alpha_{1}},1)_{Q}}, \phi_{2}\right).
	\end{equation}
	By a careful calculation, we obtain the following directional derivative
	\begin{equation}\label{Uh}
	\begin{aligned}
	\left.d_{s} \mathcal{U}_{h}^{n}\left(\phi_{1}^{*}+s \psi, \phi_{2}^{\star}\right)\right|_{s=0}
	=& \frac{1}{D_1\tau} \left( -\Delta_{ h }^{ -1 } \left( \phi_1^{ \star } - \phi_1^{ n }\right), \psi\right)_{Q}
	+\left( \delta_{ \phi_1} S\left(\phi_1^{ \star }, \phi_2^{ \star } \right),  \psi\right)_{Q}\\
	&+ (\delta_{\phi_{1}} \tilde{K}\left(\phi_{1}^{\star}, \phi_{2}^{\star}\right), \psi )
	+ \left(\delta_{\phi_{1}} H\left(\phi_{1}^{n}, \phi_{2}^{n}\right), \psi\right)_{Q} ,
	\quad \forall \psi\in \mathring{S}_{h} . 	
	\end{aligned}
	\end{equation}
	This time, due to $\left(\phi_{1}^{*}+s \psi, \phi_{2}^{\star}\right)\in A_{h,\delta}$, let us pick the direction
	\begin{equation}
	\psi=\delta_{\alpha_{0}}-C_{2}\delta_{\alpha_{1}}, \quad
	C_{2}=\frac{\operatorname{area}(D_{\alpha_{0}})}{\operatorname{area}(D_{\alpha_{1}})},
	\end{equation}
	where $\delta_{\alpha_{0}} $ and $ \delta_{\alpha_{1}}$ are the basis functions on $P_{\alpha_0}$ and $P_{\alpha_1}$,
 $D_{\alpha_{0}}$ and $D_{\alpha_{1}}$ are the support of  $\delta_{\alpha_{0}} $ and $ \delta_{\alpha_{1}}$,
respectively.
	
	For the first term appearing in \eqref{Uh}, an application of Lemma~\ref{lem5} gives
	\begin{flalign}\label{L-1}
	%\begin{aligned}
	\frac{1}{D_1\tau} (-\Delta_{ h }^{ -1 }( \phi_1^{ \star} - \phi_1^{ n }), \psi)_{Q}
	& = \frac{ 1 }{ D_1 \tau } \sum_{ e \in \mathcal{ T }_{ h } }\frac{ \triangle_{ e } }{ 3 }  \sum_{ j=1 }^{ 3 } (-\Delta_{ h }^{ -1 })( \phi_1^{ \star } - \phi_1^{ n }) \psi( P_{ e, j })\\
  \nonumber
  &=\frac{ 1 }{ 3 D_1\tau }( \operatorname{ area }( D_{ \alpha_{ 0 }}) (-\Delta_{ h }^{ -1 })( \phi_1^{ \star } \!-\! \phi_1^{ n })|_{ \alpha_{ 0 }} \!-\! C_{ 2 } \operatorname{ area }( D_{ \alpha_{ 1 }}) (-\Delta_{ h }^{ -1 })( \phi_1^{ \star } \!-\! \phi_1^{ n })|_{ \alpha_{ 1 }})\\
  \nonumber
  &=\frac{ 1 }{ 3 D_1\tau } \operatorname{ area }( D_{ \alpha_{ 0 }})\left( (-\Delta_{ h }^{ -1 })( \phi_1^{ \star } - \phi_1^{ n } )|_{ \alpha_{ 0 }} - (-\Delta_{ h }^{ -1 })( \phi_1^{ \star } - \phi_1^{ n })|_{ \alpha_{ 1 }}\right)\\
  \nonumber
  &\leq \frac{ 2C_{ 1 }}{ 3D_1\tau } \operatorname{ area }( D_{ \alpha_{ 0 }}).
%	\end{aligned}
	\end{flalign}
	For the second term, we see that
	\begin{flalign}
%	\begin{aligned}
   ( \delta_{ \phi_1 }S( \phi_1^{ \star }, \phi_2^{ \star }), \psi)_{Q}
	&=\left( \frac{ 1 }{ \gamma } \ln( \frac{ \alpha \phi_1^{ \star }}{ \gamma }) - \ln( 1 - \phi_1^{ \star } - \phi_2^{ \star }), \psi\right)_{Q}\\
	\nonumber &=\sum_{e\in\mathcal{T}_h}\left(\frac{1}{3}\triangle_{ e }\sum_{j=1}^{3}\left(\frac{1}{\gamma}\ln(\frac{\alpha\phi_{1}^{\star}}{\gamma})-\ln(1-\phi_{1}^{\star}-\phi_{2}^{\star})\right)\psi(P_{e,j})\right)\\
	\nonumber &=\frac{1}{3}\operatorname{area}(D_{\alpha_{0}})\left(\left(\frac{1}{\gamma}\ln(\frac{\alpha\phi_{1}^{\star}}{\gamma})-\ln(1-\phi_{1}^{\star}-\phi_{2}^{\star})\right)|_{\alpha_{0}}\right.\\
	\nonumber
	& \left.-\left(\frac{1}{\gamma}\ln(\frac{\alpha\phi_{1}^{\star}}{\gamma})-\ln(1-\phi_{1}^{\star}-\phi_{2}^{\star})\right)|_{\alpha_{1}}\right)\\
	\nonumber
	&= \frac{ 1 }{ 3 } \operatorname{ area }(D_{ \alpha_{ 0 }})\left(\ln\frac{(\phi_{1}^{\star})^{\frac{1}{\gamma}}}{1-\phi_{1}^{\star}-\phi_{2}^{\star}}|_{\alpha_{0}}-\ln\frac{(\phi_{1}^{\star})^{\frac{1}{\gamma}}}{1-\phi_{1}^{\star}-\phi_{2}^{\star}}|_{\alpha_{1}}\right)\\
	\nonumber &\leq\frac{1}{3}\operatorname{area}(D_{\alpha_{0}})\left(\ln\frac{(g(\delta))^{\frac{1}{\gamma}}}{\delta}
	-\ln\frac{(\bar{\phi_{1}^{0}})^{\frac{1}{\gamma}}}{1-\delta}\right) \\
	\nonumber &\leq\frac{1}{3}\operatorname{area}(D_{\alpha_{0}})\left(\ln\frac{(g(\delta))^{\frac{1}{\gamma}}}{\delta}
	-\ln(\bar{\phi_{1}^{0}})^{\frac{1}{\gamma}}\right).
%	\end{aligned}
	\end{flalign}
	For the third term, we have
	\begin{flalign}\label{EK1}
	(\delta_{\phi_{1}} \tilde{K}\left(\phi_{1}^{\star}, \phi_{2}^{\star}\right), \psi )
	&=\left(-\frac{a_{1}^{2}|\nabla {\phi_{1}^{\star}}|^{2}}{36 {(A(\phi_{1}^{\star}))}^{2}},\psi\right)
	+\left(\frac{a_{1}^{2} \nabla {\phi_{1}^{\star}}}{18 A({\phi_{1}^{\star}})},\nabla \psi\right)\\
	\nonumber
	&+\left(\frac{a_{3}^{2}\left|\nabla\left(1-{\phi_{1}^{\star}}-{\phi_{2}^{\star}}\right)\right|^{2}}{36\left(A(1-{\phi_{1}^{\star}}-{\phi_{2}^{\star}})\right)^{2}},\psi\right) -\left(\frac{a_{3}^{2}\nabla\left(1-{\phi_{1}^{\star}}-{\phi_{2}^{\star}}\right)}{18A\left(1-{\phi_{1}^{\star}}-{\phi_{2}^{\star}}\right)},\nabla\psi \right).
\end{flalign}
By Lemma ~\ref{lem:l3},
\begin{flalign}
\left(-\frac{a_1^2|\nabla {\phi_{1}^{\star}}|^{2}}{36 {(A(\phi_{1}^{\star}))}^{2}},\psi\right)
	  +\left(\frac{ a_1^2\nabla {\phi_{1}^{\star}}}{18 A({\phi_{1}^{\star}})},\nabla \psi\right)&
\le \frac{a_1^2C_{\mathcal T}}{8} \sum_{e\in {\mathcal T}_h}A(\delta_{\alpha_0})|_e
+ \frac{3a_1^2C_2C_{\mathcal T}}{8} \sum_{e\in {\mathcal T}_h}A(\delta_{\alpha_1})|_e\\
\nonumber&=\frac{a_1^2C_{\mathcal T}}{24} \sum_{e\in D_{\alpha_0}} 1
+ \frac{a_1^2C_2C_{\mathcal T}}{8} \sum_{e\in D_{\alpha_1}} 1.
\end{flalign}
Similarly,
\begin{flalign}
\left(\frac{a_{3}^{2}\left|\nabla\left(1-{\phi_{1}^{\star}}-{\phi_{2}^{\star}}\right)\right|^{2}}{36\left(A(1-{\phi_{1}^{\star}}-{\phi_{2}^{\star}})\right)^{2}},\psi\right) -\left(\frac{a_{3}^{2}\nabla\left(1-{\phi_{1}^{\star}}-{\phi_{2}^{\star}}\right)}{18A\left(1-{\phi_{1}^{\star}}-{\phi_{2}^{\star}}\right)},\nabla\psi \right)\le
\frac{a_3^2C_{\mathcal T}}{8} \sum_{e\in D_{\alpha_0}} 1+\frac{a_3^2C_2C_{\mathcal T}}{24} \sum_{e\in D_{\alpha_1}} 1.
\end{flalign}
Then the term $\delta_{\phi_{1}} \tilde{K}$ can be bounded by
\begin{flalign} \label{deltaK}
	(\delta_{\phi_{1}} \tilde{K}\left(\phi_{1}^{\star}, \phi_{2}^{\star}\right), \psi )\le \frac{(a_1^2+3a_3^2)C_{\mathcal T}}{24} \sum_{e\in D_{\alpha_0}} 1
+ \frac{(3a_1^2+a_3^2)C_2C_{\mathcal T}}{24} \sum_{e\in D_{\alpha_1}} 1.
\end{flalign}
For the numerical solution $\phi_{1}^{n}$ at the previous time step, the a-priori assumption
$0 < \phi_{1}^{n} < 1$ indicates that
$$
  -1 \leq \phi_{1}^{n}(P_{\alpha_{ 0 }}) - \phi_{1}^{n}(P_{\alpha_{ 1 }}) \leq 1.
$$
	For the last term, we have
	\begin{flalign}
%	\begin{aligned}
	(\delta_{\phi_{1}}H\left(\phi_{1}^{n}, \phi_{2}^{n}\right), \psi)_{Q}
	&=(\chi _ { 13 } - 2 \chi_{13} \phi_{1}^{n} +(\chi_{12} - \chi_{13} - \chi_{23}) \phi_{2}^{}, \psi )_{Q}\\
	\nonumber &=\sum_{e\in \mathcal{T}_{h}}\frac{1}{3}\triangle_{ e }\left(\sum_{j=1}^{3}(\chi_{13} -2\chi_{13}\phi_{1}^{n} - (\chi_{12}-\chi_{13}-\chi_{23})\phi_{2}^{n})\psi(P_{e,j})\right)\\
	\nonumber &=\frac{1}{3}\operatorname{area}(D_{\alpha_{0}})\left(-2\chi_{13}(\phi_{1}^{n}|_{\alpha_{0}} - \phi_{1}^{n}|_{\alpha_{1}})-(\chi_{12}-\chi_{13}-\chi_{23})(\phi_{2}^{n}|_{\alpha_{0}}-\phi_{2}^{n}|_{\alpha_{1}}) \right)\\
	\nonumber
	&\leq \frac{1}{3}\operatorname{area}(D_{\alpha_{0}})(\chi_{12} + 3  \chi_{13} + \chi_{23}).
%	\end{aligned}
	\end{flalign}
	To sum up, the following inequality is available
	\begin{flalign}
	\left.d_{s} \mathcal{U}_{h}^{n}\left(\phi_{1}^{*}+s \psi, \phi_{2}^{\star}\right)\right|_{s=0}
	\leq \frac{1}{3}\operatorname{area}(D_{\alpha_{ 0 }})\ln\frac{(g(\delta))^{\frac{1}{\gamma}}}{\delta}
	+r_{0} ,
	\end{flalign}
 in which $\!r_{0}\!=\!\frac{\operatorname{area}(D_{\alpha_{0}})}{3}\!\left(\!\frac{2C_{1}\!}{\!D_1\!\tau\!}\!-\ln(\bar{\phi_{1}^{0}})^{\!\frac{1}{\gamma}} \!+ \!\chi_{12} \!+\! 3\chi_{13}\! + \!\chi_{23}\!\right) \!+ \!\frac{\left(\!a_1^2 \!+ 3a_3^2\!\right)C_{\!\mathcal T}}{24} \sum_{e\in \!D_{\alpha_0}} \!1 \!+\!\frac{\left(\! 3a_1^2\! + \!a_3^2 \!\right)\!C_2\!C_{\mathcal T}}{24} \sum_{e\in D_{\alpha_1}}\! 1$.
% $r_{0}=\frac{\operatorname{area}(D_{\alpha_{0}})}{3}\left(\frac{2C_{1}}{D_1\tau}-\ln(\bar{\phi_{1}^{0}})^{\frac{1}{\gamma}} + \chi_{12} + 3 \chi_{13} + \chi_{23}\right) + \frac{\left(a_1^2 + 3a_3^2\right)C_{\mathcal T}}{24} \sum_{e\in D_{\alpha_0}} 1 +\frac{\left( 3a_1^2 + a_3^2 \right)C_2C_{\mathcal T}}{24} \sum_{e\in D_{\alpha_1}} 1$. 
 Note that $r_{0}$ is a constant for a fixed $\tau,h$, while it becomes singular as $\tau\longrightarrow0$. For any fixed $\tau$, we could choose $g(\delta)$ sufficiently small so that
\begin{equation}\label{c1}
\frac{1}{3}\operatorname{area}(D_{\alpha_{ 0 }})\ln\frac{(g(\delta))^{\frac{1}{\gamma}}}{\delta}
+r_{0}<0,
\end{equation}
such as $ g(\delta)=\left(\delta \exp\left(-\frac{3(r_{0}+1)}{\operatorname{area}(D_{\alpha_{ 0 }})}\right)\right)^{\gamma} $.
This in turn shows that
\begin{equation*}
d_{\mathrm{s}} \mathcal{U}_{h}^{n}\left(\phi_{1}^{\star}+s \psi, \phi_{2}^{\star}\right)|_{s=0}<0, \text { for } g(\delta)~ \text{satisfying \eqref{c1}}.
\end{equation*}
This contradicts the assumption that $\mathcal{J}_{h}^{n}$ has a minimum at $(\phi_{1}^{\star},\phi_{2}^{\star})$, since the directional derivative is negative in a direction pointing into $(A_{h,\delta})^{\circ}$, the interior of $A_{h,\delta}$.
	
Case 2: Using similar arguments as in Case 1, we can also prove that, the global minimum of $\mathcal{J}_{h}^{n}$ over $A_{h,\delta}$ could not occur on the boundary section of $\phi_{2,\alpha_{0}}=g(\delta)$, for any grid node number $\alpha_{ 0 }$, if $g(\delta)$ is small enough.
	
	Case 3: We set the minimization point as $(\phi_{1}^{\star},\phi_{2}^{\star})$, with $\phi_{1,\alpha_{0}}^{\star}+\phi_{2,\alpha_{0}}^{\star}=1-\delta$, where $\alpha_{0}$ represents $\alpha_{0}$-th grid node number, and assume that $\phi_{1,\alpha_{0}}\geq\frac{1}{3}$. In addition, $(\phi_{1}+\phi_{2},1)=\overline{\phi_{1}^{0}} + \overline{\phi_{2}^{0}}$, there exists one grid point $\alpha_{1}=(i_{1}, j_{1})$, so that $\phi_{1}^{\star}+\phi_{2}^{\star}$ reaches the maximum value. It is obvious that $\phi_{1,\alpha_{1}}^{\star}+\phi_{2,\alpha_{1}}^{\star}\leq \overline{\phi_{1}^{\star}} + \overline{\phi_{2}^{\star}}= \overline{\phi_{1}^{0}} + \overline{\phi_{2}^{0}}$. Similarly, the variable $\phi_{1,\alpha_{1}}$ could be viewed as the $N_{p}^2$-th one in the hyperplane $V$, with the condition
	\begin{equation}
	\phi_{1,\alpha_{1}}=\frac{(\overline{\phi_{1}^{0}},1)_{Q} - \sum_{i\neq\alpha_{1}}^{N_{p}} (\phi_{1, i},\chi_{i})_{Q}}
	{(\chi_{\alpha_{1}},1)_{Q}} .
	\end{equation}
	In more details, the following alternate function is introduced
	\begin{equation}
	\mathcal{U}_{h}^{n}\left(\left.\left(\phi_{1, i}\right)\right|_{i \neq \alpha_{1}}, \phi_{2}\right):=\mathcal{J}_{h}^{n}\left(\cdot,\left(\phi_{1}\right)_{\alpha_{1}}, \phi_{2}\right)=\mathcal{J}_{h}^{n}\left(\cdot, \frac{(\overline{\phi_{1}^{0}},1)_{Q}-\sum_{i \neq \alpha_{1}}^{N_{p}}(\phi_{1,i},\chi_{i})_{Q}}{(\chi_{\alpha_{1}},1)_{Q}}, \phi_{2}\right).
	\end{equation}
Again, a careful calculation implies the following directional derivative
    \begin{equation}\label{Uh4}
    \begin{aligned}
    \left.d_{s} \mathcal{U}_{h}^{n}\left(\phi_{1}^{*}+s \psi, \phi_{2}^{\star}\right)\right|_{s=0}
    =& \frac{1}{D_1\tau} \left( - \Delta_{ h }^{ -1 } \left( \phi_1^{ \star } - \phi_1^{ n }\right), \psi\right)_{Q}
    + \left( \delta_{ \phi_1} S\left(\phi_1^{ \star }, \phi_2^{ \star } \right),  \psi\right)_{Q}\\
    &+ (\delta_{\phi_{1}} \tilde{K}\left(\phi_{1}^{\star}, \phi_{2}^{\star}\right), \psi) + \left(\delta_{\phi_{1}} H\left(\phi_{1}^{n}, \phi_{2}^{n}\right), \psi\right)_{Q} ,
    \quad  \forall  \psi\in \mathring{S}_{h} .
    \end{aligned}
    \end{equation}
In this case, since $(\phi_{1}^{*}+s \psi, \phi_{2}^{\star}) \in A_{h,\delta}$, we pick the direction
	\begin{equation}
	\psi=C_{2}\delta_{\alpha_{1}}-\delta_{\alpha_{0}},C_{2}=\frac{\operatorname{area}(D_{\alpha_{0}})}{\operatorname{area}(D_{\alpha_{1}})} .
	\end{equation}
%\textcolor{blue}{[delete]}\textcolor{red}{in which $\delta_{\alpha_{0}} $ and $ \delta_{\alpha_{1}}$ are the Dirac delta function, i.e., $\delta_{\alpha_{0}}=\delta_{i_{0},j_{0}}$.}
For the first term in~\eqref{Uh4}, an application of Lemma~\ref{lem5} leads to
	\begin{flalign}
	\frac{1}{D_1\tau}(-\Delta_{h}^{-1}(\phi_{1}^{\star}-\phi_{1}^{n}),\psi)_{Q}
	&=\frac{1}{D_1\tau}\sum_{e\in\mathcal{T}_{h}}\frac{1}{3}\triangle_{ e }\sum_{j=1}^{3}\Delta_{h}^{-1}(\phi_{1}^{\star}-\phi_{1}^{n})\psi(P_{e,j})\\
	\nonumber &=\frac{1}{3D_1\tau} (C_{2}\operatorname{area}(D_{\alpha_{1}})(-\Delta_{ h }^{ -1 })(\phi_{1}^{\star} \!-\!\phi_{1}^{n})|_{\alpha_{1}}
	\!-\!\operatorname{area}(D_{\alpha_{0}})(-\Delta_{ h }^{ -1 })(\phi_{1}^{\star} \!-\! \phi_{1}^{n})|_{\alpha_{0}})\\
	\nonumber &=\frac{1}{3D_1\tau}\operatorname{area}(D_{\alpha_{0}})\left((-\Delta_{ h }^{ -1 })(\phi_{1}^{\star}-\phi_{1}^{n})|_{\alpha_{1}}-(-\Delta_{ h }^{ -1 })(\phi_{1}^{\star}-\phi_{1}^{n})|_{\alpha_{0}}\right)\\
	\nonumber
	&\leq\frac{2C_{1}}{3D_1\tau}\operatorname{area}(D_{\alpha_{0}}).
	\end{flalign}
For the second term, a similar inequality could be derived
	\begin{flalign}
%	\begin{aligned}
	( \delta_{ \phi_1} S( \phi_1^{ \star }, \phi_2^{ \star }), \psi)_{Q}
	&= \left( \frac{ 1 }{ \gamma } \ln( \frac{ \alpha \phi_1^{ \star }}{ \gamma }) - \ln( 1 - \phi_1^{ \star } - \phi_2^{ \star }), \psi\right)_{Q}\\
	\nonumber
	&=\sum_{e\in\mathcal{T}_h}\left(\frac{1}{3}\triangle_{ e }\sum_{j=1}^{3}\left(\frac{1}{\gamma}\ln(\frac{\alpha\phi_{1}^{\star}}{\gamma})-\ln(1-\phi_{1}^{\star}-\phi_{2}^{\star})\right)\psi(P_{e,j})\right)\\
	\nonumber
	&=-\frac{1}{3}\operatorname{area}(D_{\alpha_{0}})\left(\left(\frac{1}{\gamma}\ln(\frac{\alpha\phi_{1}^{\star}}{\gamma})-\ln(1-\phi_{1}^{\star}-\phi_{2}^{\star})\right)|_{\alpha_{1}}\right.\\
	\nonumber
	&-\left.\left(\frac{1}{\gamma}\ln(\frac{\alpha\phi_{1}^{\star}}{\gamma})-\ln(1-\phi_{1}^{\star}-\phi_{2}^{\star})\right)|_{\alpha_{0}}\right)\\
	\nonumber
	&=-\frac{1}{3}\operatorname{area}(D_{\alpha_{0}})\left(\ln\frac{(\phi_{1}^{\star})^{\frac{1}{\gamma}}}{1-\phi_{1}^{\star}-\phi_{2}^{\star}}|_{\alpha_{1}}-\ln\frac{(\phi_{1}^{\star})^{\frac{1}{\gamma}}}{1-\phi_{1}^{\star}-\phi_{2}^{\star}}|_{\alpha_{0}}\right)\\
	\nonumber
	&\leq\frac{1}{3}\operatorname{area}(D_{\alpha_{0}})\left(\ln\frac{1}{1-\bar{\phi_{1}^{0}}-\bar{\phi_{2}^{0}}} -\ln\frac{{\frac{1}{3}}^{1/\gamma}}{\delta}\right).
%	\end{aligned}
	\end{flalign}
For the third term, we have the following expansion as in \eqref{deltaK}:
	\begin{flalign}
	(\delta_{\phi_{1}} \tilde{K}\left(\phi_{1}^{\star}, \phi_{2}^{\star}\right), \psi )\le \frac{(3a_1^2+a_3^2)C_{\mathcal T}}{24} \sum_{e\in D_{\alpha_0}} 1
+ \frac{(a_1^2+3a_3^2)C_2C_{\mathcal T}}{24} \sum_{e\in D_{\alpha_1}} 1.
\end{flalign}

For the last term in~\eqref{Uh4}, since the numerical solution at the previous time step is involved, the a-priori assumption $0 < \phi_{1}^{n} < 1$ indicates that
	$$
	-1 \leq \phi_{1,{\alpha}_{0}}^{n}-\phi_{1,{\alpha}_{1}}^{n} \leq 1,
	$$
	which in turn results in the following inequality
	\begin{flalign}
	%	\begin{aligned}
	(\delta_{\phi_{1}}H\left(\phi_{1}^{n}, \phi_{2}^{n}\right), \psi)_{Q}
	&=(\chi _ { 13 } - 2 \chi_{13} \phi_{1}^{n} +(\chi_{12} - \chi_{13} - \chi_{23}) \phi_{2}^{}, \psi )_{Q}\\
	\nonumber &=\sum_{e\in \mathcal{T}_{h}}\frac{1}{3}\triangle_{ e }\left(\sum_{j=1}^{3}(\chi_{13} -2\chi_{13}\phi_{1}^{n} - (\chi_{12}-\chi_{13}-\chi_{23})\phi_{2}^{n})\psi(P_{e,j})\right)\\
	\nonumber &=\frac{1}{3}\operatorname{area}(D_{\alpha_{0}})\left(-2\chi_{13}(\phi_{1}^{n}|_{\alpha_{1}} - \phi_{1}^{n}|_{\alpha_{0}})-(\chi_{12}-\chi_{13}-\chi_{23})(\phi_{2}^{n}|_{\alpha_{1}}-\phi_{2}^{n}|_{\alpha_{0}}) \right)\\
	\nonumber &\leq \frac{1}{3}\operatorname{area}(D_{\alpha_{0}})( \chi_{12} + 3\chi_{13} + \chi_{23}) .
	%	\end{aligned}
	\end{flalign} 	
	In turn, a summation of the above estimates yields
	\begin{equation}
	\begin{aligned}
	\left.d_{s} \mathcal{U}_{h}^{n}\left(\phi_{1}^{*}+s \psi, \phi_{2}^{\star}\right)\right|_{s=0}
	\leq  \frac{1}{3}\operatorname{area}(D_{\alpha_{0}})
	\ln\delta+r_{1},
	\end{aligned}
	\end{equation}
in which $r_{1}=\frac{1}{3}\operatorname{area}(D_{\alpha_{ 0 }})(\frac{2C_{1}}{D_1\tau}+\frac{1}{\gamma}\ln3+\ln\frac{1}{1+\bar{\phi_{1}^{0}}+\bar{\phi_{2}^{0}}}+ \chi_{12} + 3 \chi_{13} + \chi_{23})+\frac{(3a_1^2+a_3^2)C_{\mathcal T}}{24} \sum_{e\in D_{\alpha_0}} 1
+ \frac{(a_1^2+3a_3^2)C_2C_{\mathcal T}}{24} \sum_{e\in D_{\alpha_1}} 1$. Again, $r_{1}$ is a constant for a fixed $\tau$ and $h$, we could choose $\delta$ sufficiently small so that
	\begin{equation}\label{c3}
	\frac{1}{3}\operatorname{area}(D_{\alpha_{0}}) \ln \delta + r_{1} < 0,
	\end{equation}
	such as $ \delta= \exp\left(-\frac{3(r_{1} + 1 )}{\operatorname{area}(D_{\alpha_{0}})}\right)$.
	This in turn demonstrates that
	\begin{equation*}
	d_{\mathrm{s}} \mathcal{U}_{h}^{n}\left(\phi_{1}^{\star}+s \psi, \phi_{2}^{\star}\right)|_{s=0}<0, \text { for } g(\delta)~ \text{satisfy \eqref{c3}} ,
	\end{equation*}
which contradicts the assumption that $\mathcal{J}_{h}^{n}$ has a minimum at $(\phi_{1}^{\star},\phi_{2}^{\star})$, since the directional derivative is negative in a direction pointing into $(A_{h,\delta})^{\circ}$, the interior of $A_{h,\delta}$.

Case 4: Using similar arguments, we can also prove that, the global minimum of $\mathcal{ J}_{h}^{n}$ over $\mathcal{ A }_{h,\delta}$ could not occur on the boundary section where $\phi_{1,\alpha_{0}}^{\star}+\phi_{2,\alpha_{0}}^{\star}=1-\delta$, if $\delta$ is sufficiently small, for any point index $\alpha_{0}$. The details are left to the interested readers.

Finally, a combination of these four cases shows that, the global minimizer of $\mathcal{J}_{h}^{n} (\phi_1,\phi_2)$ could only possibly occur at interior point of $( A_{h,\delta} )^0 \subset (A_h )^0$. We conclude that there must be a solution $(\phi_1, \phi_2) \in (A_h )^0$ that minimizes $\mathcal{J}_h^n (\phi_1,\phi_2)$ over $A_h$, which is equivalent to the numerical solution of~\eqref{Mass-lumped scheme}. The existence of the numerical solution is established.

In addition, since $\mathcal{J}_h^n(\phi_1,\phi_2)$ is a strictly convex function over $A_h$, the uniqueness analysis for this numerical solution is straightforward. The proof of Theorem~\ref{thm:positivity} is complete.
\end{proof}
%scheme \eqref{full-scheme1} and \eqref{full-scheme2} had a extermely strong nonlinearity, the  Newton-Raphson method had been empolyed in it. In practice, we view the functional $G_i$ as an 2mn-variable function. The Newton-GMRES procedure for finding $\phi_{i,n+1}$ with given $\phi_{i,n}$ is as follows:

\section{The energy stability} \label{sec:energy stability}

An unconditional energy stability for the proposed numerical scheme~\eqref{Mass-lumped scheme} is stated below.

\begin{theorem}(energy stability) \label{thm:energy stability}
	The unique solution of the mass lumped fully-discrete scheme \eqref{Mass-lumped scheme} is unconditionally energy stable, i.e., for any time step size $\tau > 0$, the following estimate is valid:
	\begin{equation}
	\hat{ E }( \phi_{ 1h }^{ n+1 }, \phi_{ 2h }^{ n+1 } ) \leq \hat{ E }(\phi_{ 1h }^{ n }, \phi_{ 2h }^{ n }).
	\end{equation}
\end{theorem}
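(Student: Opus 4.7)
The plan is to follow the standard convex-splitting energy-stability argument, adapted to the lumped-mass inner product $(\cdot,\cdot)_Q$, by combining the scheme \eqref{Mass-lumped scheme} with the discrete convex-concave inequality in Lemma~\ref{lem4}. The key observation is that the scheme is engineered so that the right-hand side of each $\mu$-equation equals $\delta_{\phi_i}\hat E_c$ at the new time level plus $\partial_{\phi_i} H$ at the old time level, which is exactly the linear expression that upper-bounds the energy increment $\hat E(\phi^{n+1})-\hat E(\phi^{n})$ in Lemma~\ref{lem4}.

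First, I would test \eqref{mass1} and \eqref{mass2} with $v_i=\mu_{ih}^{n+1}$, which by the definition of $\Delta_h$ under $(\cdot,\cdot)_Q$ yields the dissipation identity
\begin{equation*}
\bigl(\phi_{ih}^{n+1}-\phi_{ih}^{n},\mu_{ih}^{n+1}\bigr)_Q \;=\; -\tau D_i\,\|\nabla \mu_{ih}^{n+1}\|^2, \qquad i=1,2.
\end{equation*}
Next, I would test the two chemical-potential identities with $w_i=\phi_{ih}^{n+1}-\phi_{ih}^{n}$, an admissible choice because Theorem~\ref{thm:positivity} ensures $\overline{\phi_{ih}^{n+1}}=\overline{\phi_{ih}^{n}}$, so $w_i\in\mathring S_h$, giving
\begin{equation*}
\bigl(\mu_{ih}^{n+1},\phi_{ih}^{n+1}-\phi_{ih}^{n}\bigr)_Q = \bigl(\partial_{\phi_i} S^{n+1}+\partial_{\phi_i}H^{n},\,\phi_{ih}^{n+1}-\phi_{ih}^{n}\bigr)_Q + \bigl(\delta_{\phi_i}\tilde K^{n+1},\,\phi_{ih}^{n+1}-\phi_{ih}^{n}\bigr).
\end{equation*}

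Equating the two identities for each $i$ and summing over $i=1,2$ leaves
\begin{equation*}
\sum_{i=1}^{2}\!\Bigl[\bigl(\partial_{\phi_i} S^{n+1}\!+\!\partial_{\phi_i} H^{n},\phi_{ih}^{n+1}\!-\!\phi_{ih}^{n}\bigr)_Q + \bigl(\delta_{\phi_i}\tilde K^{n+1},\phi_{ih}^{n+1}\!-\!\phi_{ih}^{n}\bigr)\Bigr] = -\tau\sum_{i=1}^{2} D_i\|\nabla\mu_{ih}^{n+1}\|^2 \le 0.
\end{equation*}
Finally, I would invoke Lemma~\ref{lem4} with $(\phi_1,\phi_2)=(\phi_{1h}^{n+1},\phi_{2h}^{n+1})$ and $(\varphi_1,\varphi_2)=(\phi_{1h}^{n},\phi_{2h}^{n})$, which upper-bounds $\hat E(\phi_{1h}^{n+1},\phi_{2h}^{n+1})-\hat E(\phi_{1h}^{n},\phi_{2h}^{n})$ by precisely the sum on the left-hand side above, delivering the claimed unconditional dissipation.

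The whole argument is essentially bookkeeping once Lemma~\ref{lem4} is in hand. The only delicate point is that the lumped quadrature pairing $(\cdot,\cdot)_Q$ is used consistently for the $S$ and $H$ pieces while the standard $L^2$ pairing is retained for the nonlinear gradient term $\tilde K$, in both the scheme and in the convexity lemma; by construction these two pairings match term-by-term. The true obstacle—ensuring that the logarithmic entropy and the deGennes gradient term are well-defined at the new time level—has already been resolved by the positivity result of Theorem~\ref{thm:positivity}, so no constraint on $\tau$ or $h$ is required and the stability holds unconditionally.
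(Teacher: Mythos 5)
Your proposal is correct and is essentially the paper's own argument: both proofs reduce to Lemma~\ref{lem4} applied with $(\phi_1,\phi_2)=(\phi_{1h}^{n+1},\phi_{2h}^{n+1})$ and $(\varphi_1,\varphi_2)=(\phi_{1h}^{n},\phi_{2h}^{n})$ after testing the scheme so as to expose a nonnegative dissipation term --- the paper takes $v_i=(-\Delta_h)^{-1}(\phi_{ih}^{n+1}-\phi_{ih}^{n})$ and writes the dissipation as $\frac{1}{D_i\tau}\|\phi_{ih}^{n+1}-\phi_{ih}^{n}\|_{-1,Q}^{2}$, while you take $v_i=\mu_{ih}^{n+1}$ and write it as $\tau D_i\|\nabla\mu_{ih}^{n+1}\|^{2}$, which is the same quantity by \eqref{mass1} and \eqref{mass2} together with the definition of $(-\Delta_h)^{-1}$. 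Two minor points: since $\mu_{ih}^{n+1}$ need not have zero mean, you should strictly test with $\mu_{ih}^{n+1}-\overline{\mu_{ih}^{n+1}}\in\mathring{S}_h$ (harmless, because $(\phi_{ih}^{n+1}-\phi_{ih}^{n},1)_Q=0$ by the mass constraint of Theorem~\ref{thm:positivity}), and your reading of Lemma~\ref{lem4} --- with $\partial_{\phi_i}H$ evaluated at the \emph{old} time level and paired against the increments $\phi_{ih}^{n+1}-\phi_{ih}^{n}$ --- is the intended convex-splitting form that matches the scheme, even though the lemma's displayed statement (and the paper's own proof of the theorem) carries sign and argument typos, writing $H$ at level $n+1$.
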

\begin{proof}
	The energy stability of the mass lumped scheme \eqref{Mass-lumped scheme} is a direct consequence of Lemma~\ref{lem4}.
	
	For $w_{1},w_{2} \in \mathring{S}_h $, we denote $v_{1}=(-\Delta_h)^{-1}w_{1}, v_{2}=(-\Delta_h)^{-1}w_{2}$, and obtain
	\begin{equation}
	\begin{aligned}
	\left(\frac{ \phi_{ 1h }^{n+1}-\phi_{ 1h }^{n}}{D_1\tau}, (-\Delta_h)^{-1} w_{1}\right)_{Q}
	+\left( \delta_{\phi_1} \tilde{K} (\phi_{ 1h }^{n+1}, \phi_{ 2h }^{n+1}) ,w_{1}\right)
	&\\
	+\left(\frac{\pa}{\pa{\phi_1}}S(\phi_{ 1h }^{n+1}, \phi_{ 2h }^{n+1})+\frac{\pa}{\pa{\phi_1}}H(\phi_{ 1h }^{n+1}, \phi_{ 2h }^{n+1}),w_{1}\right)_{Q}
	& = 0,\\
	\left(\frac{ \phi_{ 2h }^{n+1}-\phi_{ 2h }^{n}}{D_2\tau}, (-\Delta_h)^{-1} w_{2}\right)_{Q}
	+\left( \delta_{\phi_2} \tilde{K} (\phi_{ 1h }^{n+1}, \phi_{ 2h }^{n+1}) ,w_{2}\right)
	&\\
	+ \left(\frac{\pa}{\pa{\phi_2}}S(\phi_{ 1h }^{n+1}, \phi_{ 2h }^{n+1})+\frac{\pa}{\pa{\phi_2}}H(\phi_{ 1h }^{n+1}, \phi_{ 2h }^{n+1}),w_{2}\right)_{Q}
	& = 0 .
	\end{aligned}
	\end{equation}
In turn, by setting $w_{1}=\phi_{ 1h }^{ n+1 } - \phi_{ 1h }^{ n }, w_{ 2 } = \phi_{ 2h }^{ n+1 } - \phi_{ 2h }^{ n }$, and applying Lemma~\ref{lem4}, we arrive at
	\begin{flalign*}
	0& =\frac{1}{D_1\tau}\|\phi_{ 1h }^{ n+1 } - \phi_{ 1h }^{ n }\|_{ -1, Q}^{ 2 }
	+ \left(\frac{\pa}{\pa{\phi_1}}S(\phi_{ 1h }^{n+1}, \phi_{ 2h }^{n+1})+\frac{\pa}{\pa{\phi_1}}H(\phi_{ 1h }^{n+1}, \phi_{ 2h }^{n+1}),\phi_{ 1h }^{ n+1 } - \phi_{ 1h }^{ n }\right)_{Q} \\
	& + \frac{1}{ D_2\tau }\|\phi_{ 2h }^{ n+1 } - \phi_{ 2h }^{ n }\|_{ -1, Q } ^{ 2 }
	+ \left(\frac{\pa}{\pa{\phi_2}}S(\phi_{ 1h }^{n+1}, \phi_{ 2h }^{n+1})+\frac{\pa}{\pa{\phi_2}}H(\phi_{ 1h }^{n+1}, \phi_{ 2h }^{n+1}),\phi_{ 2h }^{ n+1 } - \phi_{ 2h }^{ n }\right)_{Q}\\
	&+\left( \delta_{\phi_1} \tilde{K} (\phi_{ 1h }^{n+1}, \phi_{ 2h }^{n+1}) ,\phi_{ 1h }^{ n+1 } - \phi_{ 1h }^{ n }\right)
	+\left( \delta_{\phi_2} \tilde{K} (\phi_{ 1h }^{n+1}, \phi_{ 2h }^{n+1}) ,\phi_{ 2h }^{ n+1 } - \phi_{ 2h }^{ n }\right)\\
	& \geq\frac{1}{D_1\tau }\|  \phi_{ 1h }^{ n+1 } - \phi_{ 1h }^{ n }\|_{ -1, Q }^{ 2 }
	+ \frac{1}{D_2\tau }\| \phi_{ 2h }^{ n+1 } - \phi_{ 2h }^{ n }\|_{ -1, Q }^2
	+\hat{E} ( \phi_{ 1h }^{ n+1 }, \phi_{ 2h }^{ n+1 })
	- \hat{E} ( \phi_{ 1h }^{ n }, \phi_{ 2h }^{ n })\\
	&\geq \hat{E} ( \phi_{ 1h }^{ n+1 }, \phi_{ 2h }^{ n+1 })
	- \hat{E} ( \phi_{ 1h }^{ n }, \phi_{ 2h }^{ n }) .
	\end{flalign*}
This finishes the proof of Theorem~\ref{thm:energy stability}.
\end{proof}

\section{Numerical results} \label{sec:numerical results}

In this section, we perform some numerical simulations using the proposed scheme \eqref{Mass-lumped scheme}. %to verify the theoretical results including energy decay, mass conservation, the first order convergency and positivity preserving property of the numerical solution.
In \cite{ji2021modeling}, the authors simulated several numerical examples for solving three-component MMC-TDGL equations by the SAV method and showed some phase transition processes, with different initial concentrations as well as the statistical segment lengths $a_{ i }, i=1,2,3$, consistent with an earlier work~\cite{li_phase_2015}. The statistical segment lengths $a_{ i }$ in the deGennes interfacial gradient terms, $\frac{1}{36}\sum_{i=1}^{3}\frac{a_{i}^{2}}{\phi_{ i }}|\nabla \phi_{ i }|^{ 2 }, i=1,2,3$, determine the interface thickness. Now, the default parameter of MMC-TDGL is selected to make $F_{e}$ convex; % and used in all examples,
see Table \ref{tab_coef}. In fact, these parameters are only used for the numerical experiments, to validate the effectiveness of the proposed finite element scheme. In the numerical simulation of more realistic physical problems, these parameters could be easily adjusted, and no essential pattern difference is expected for the computational results with the parameter modification.

\begin{table}[!htpb]
	\begin{tabular}{|c|c|c|c|c|c|c|c|c|c|c|}
		\hline
		Parameter	& $D_{1}$  & $D_{2}$  & $\chi_{12}$ & $\chi_{13}$ & $\chi_{23}$  & $\gamma$ & N & $a_1$& $a_2$ & $a_3$  \\
		\hline
		Value	& 1  & 1 & 4 & 10  & 1.6 & 0.16 & 5.12 & 1 & 1 & 1\\
		\hline
	\end{tabular} \caption{The values of the parameters in the simulation}\label{tab_coef}
\end{table}

The first example is aimed to test the numerical convergence. The second one simulates a periodic structure on a large domain.  In addition, the third one is designed to show some realistic results associated with the evolution of macromolecular microsphere hydrogels. For convenience, we only consider the periodic boundary condition, and the case of homogeneous Neumann boundary condition could be similarly handled.

\begin{example}	
Let parameter $a_{1}=a_{2}=a_{3}=0.3,$ while keeping the other default parameters constant. Consider the MMC-TDGL equation over the domain $\Omega = (0,1)^2$, with the initial data given by
	\begin{equation}\label{initial-1}
	\begin{aligned}
	\phi_{1}(x, y,0)&=0.1+0.01\cos(2\pi x)\cos(2\pi y),\\
	\phi_{2}(x, y,0)&=0.5+0.01\cos(2\pi x)\cos(2\pi y).
	\end{aligned}
	\end{equation}
	\end{example}
	
We use the triangular mesh with size $h=1/256$ for partition of the domain. Since the exact solution is unknown, we compute the errors by adjacent time step in the numerical accuracy test. % where the minimum time step size is taken as $\tau= 7.8125 e-6$. 
Figure~\ref{fig_conv_t} presents the $L^{\infty}$ and $L^2$ numerical errors of the three-phase variables, $\phi_{1}, \phi_{2}, \phi_{3}$, as well as a reference line at the terminal time $T=0.02$. In turn, the time step size is determined by the formula $\tau = \frac{T}{N_T}$, in which $N_T$ stands for the total number of time steps. Due to the $O (h^2)$ approximation in space, the spatial error is negligible. % and the first-order temporal convergence rates are obviously observed in both the $L^\infty$ and $L^2$ norms. 
The expected temporal numerical accuracy assumption $e=C \tau$ indicates that $\ln |e|=\ln (CT) - \ln N_T$, so that we plot $\ln |e|$ versus. $\ln N_T$ to demonstrate the temporal convergence order. The reference line has an exact slope of -1, while the least square approximation to the $L^2$ error curves has approximate slopes -1.0466,  -1.0122, -1.0154, for the variables $\phi_1$, $\phi_2$ and $\phi_3$, respectively. In other words, a perfect first order temporal convergence rate is reported.  
	\begin{figure}[!htpb]
		\begin{center}
			%    		 \multiinclude[graphics={width=\textwidth}]{ka12}
			\includegraphics[width=0.48\textwidth]{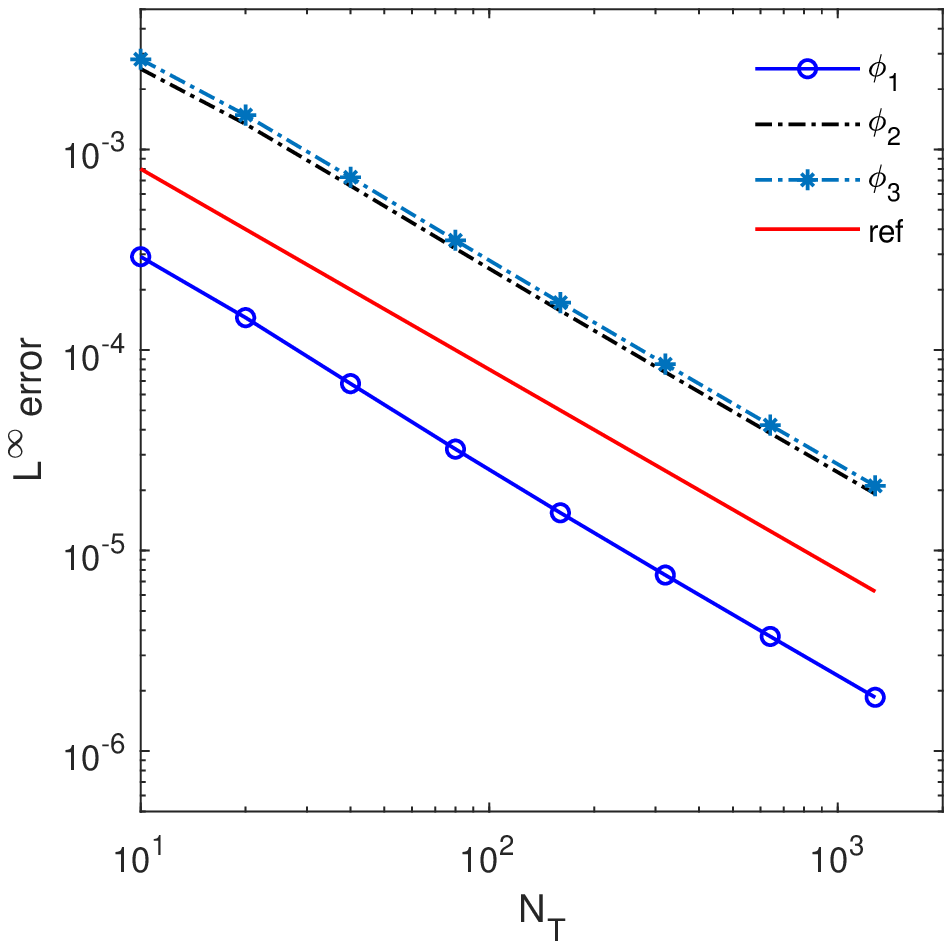}	\includegraphics[width=0.48\textwidth]{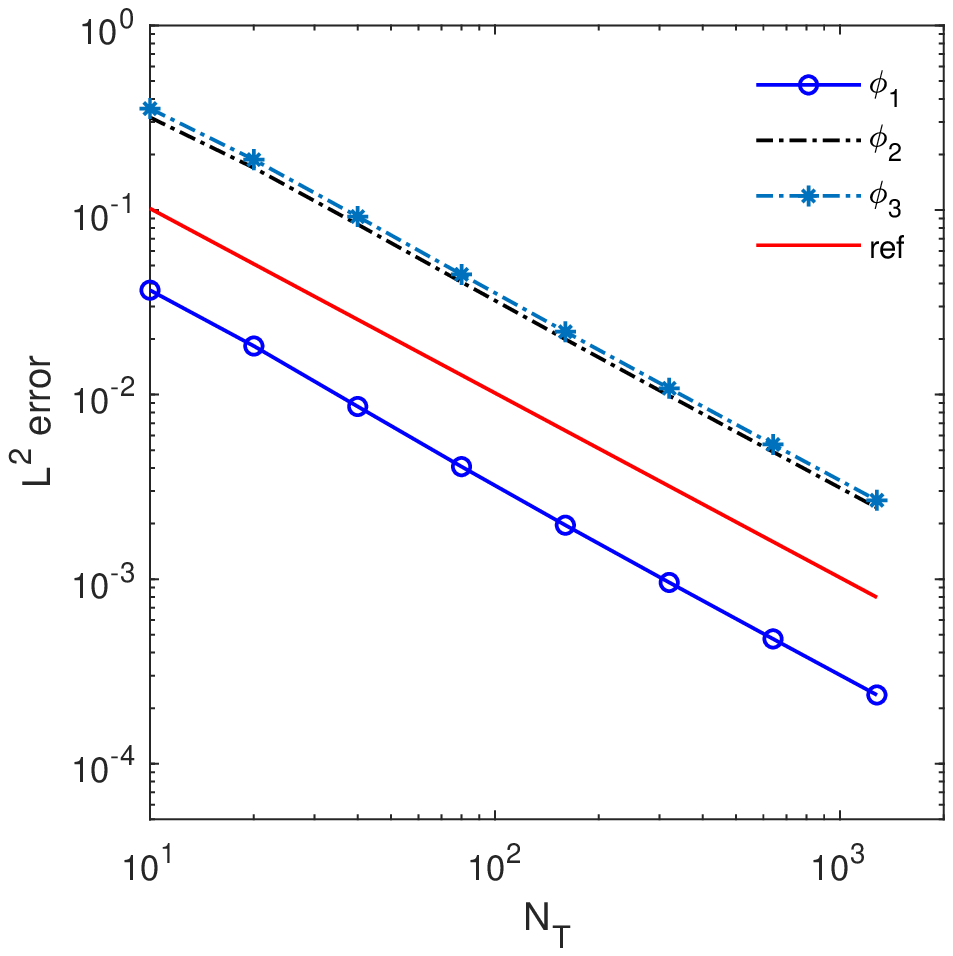}
			\caption{The  $L^\infty$ and $L^2$ numerical errors versus temporal resolution  $N_T$, at the final time $T=0.02$ in Example 6.1, by fixing $h=1/256$. The time step size is given by $\tau = \frac{T}{N_T}$. The reference line has an exact slope of -1, while the least square approximation to the $L^2$ error curves has approximate slopes -1.0466,  -1.0122, -1.0154, for the variables $\phi_1$, $\phi_2$ and $\phi_3$, respectively.}\label{fig_conv_t}
		\end{center}
	\end{figure}	

In the accuracy test for the spatial convergence order, we set the time size as $\tau= 7.8125 e-6$, so that the temporal error is negligible. A sequence of spatial resolutions are taken, with $h=\frac{1}{N_0}$.  The expected temporal numerical accuracy assumption $e=C h^2$ indicates that $\ln |e|=\ln C - 2 \ln N_0$, so that we plot $\ln |e|$ versus $\ln N_0$ to demonstrate the temporal convergence order. Similarly, Figure~\ref{fig_conv_h} presents the $L^{\infty}$ and $L^2$ numerical errors of the three-phase variables, as well as a reference line at the terminal time $T=0.02$, for this spatial convergence order test. The reference line has an exact slope of -2, while the least square approximation to the $L^2$ error curves has approximate slopes -2.0532,  -2.0476, -2.0480, for the variables $\phi_1$, $\phi_2$ and $\phi_3$, respectively. Therefore, a perfect second order spatial convergence rate is reported.    

	\begin{figure}[!htpb]
		\begin{center}
			%    		 \multiinclude[graphics={width=\textwidth}]{ka12}
			\includegraphics[width=0.48\textwidth]{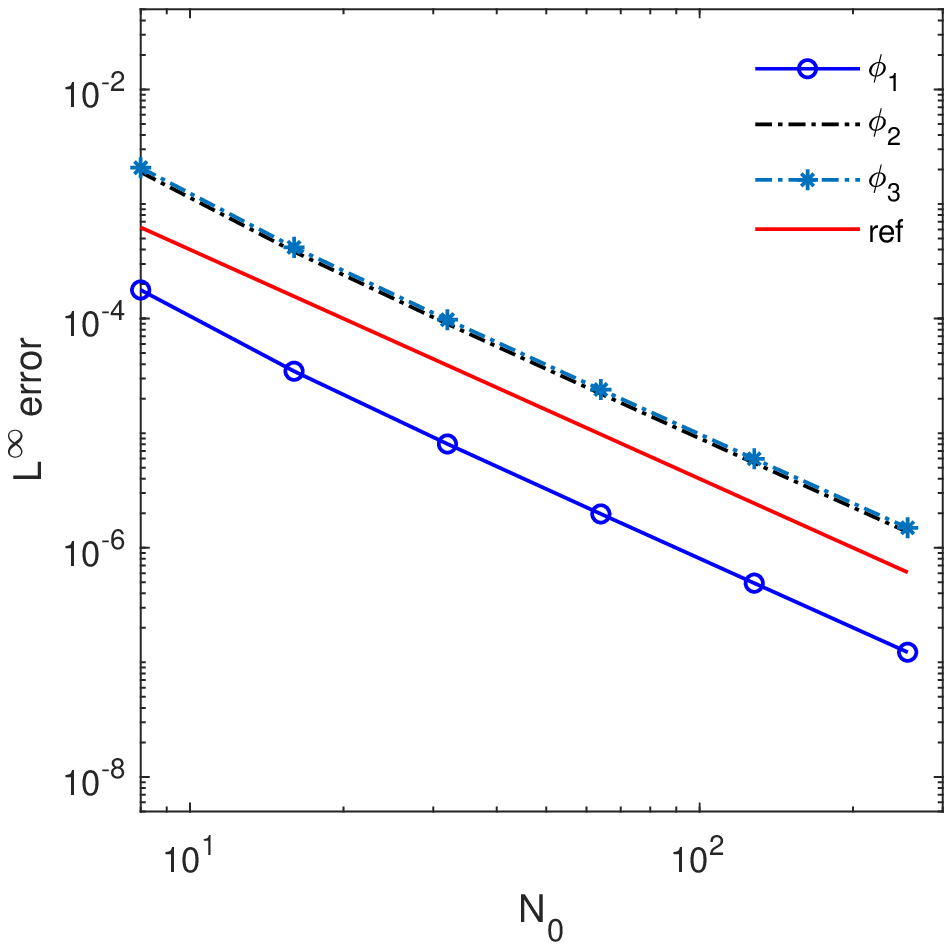}	\includegraphics[width=0.48\textwidth]{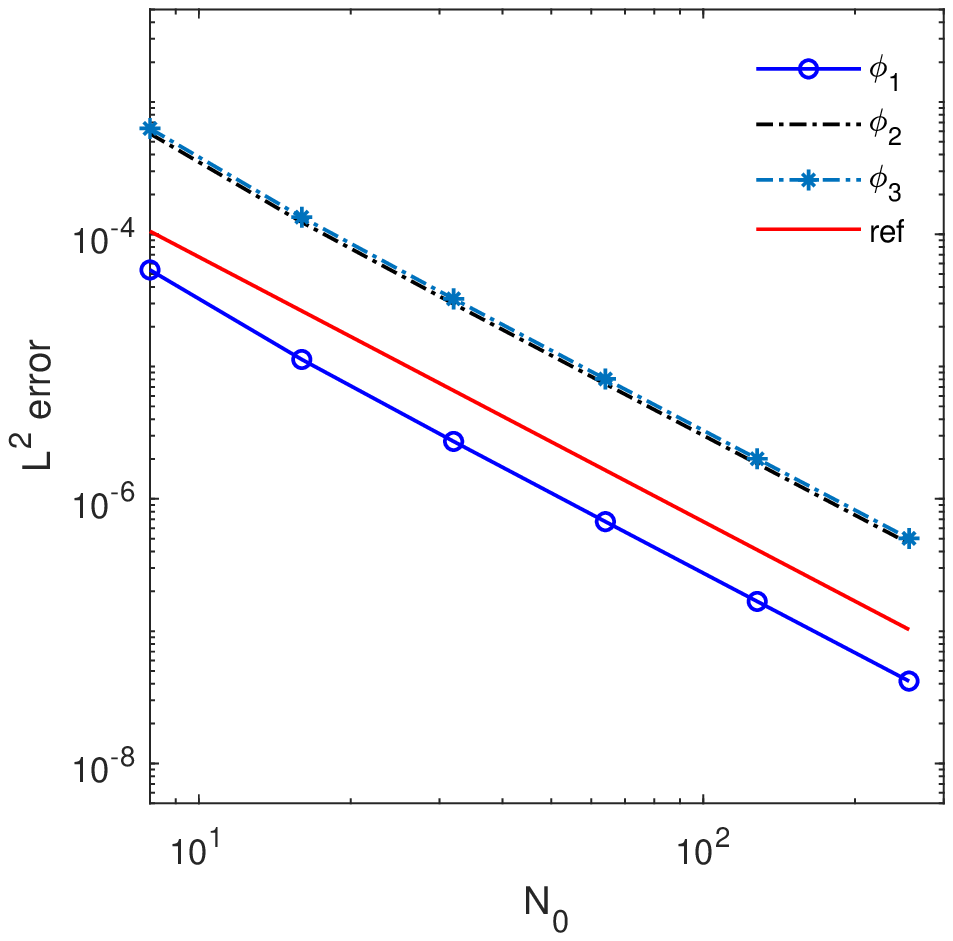}
			\caption{The  $L^\infty$ and $L^2$ numerical errors versus spatial resolution  $N_0$, at the final time $T=0.02$ in Example 6.1, by fixing $\tau= 7.8125 e-6$. The spatial mesh size is given by $h = \frac{1}{N_0}$. The reference line has an exact slope of -2, while the least square approximation to the $L^2$ error curves has approximate slopes -2.0532,  -2.0476, -2.0480, for the variables $\phi_1$, $\phi_2$ and $\phi_3$, respectively.}\label{fig_conv_h}
		\end{center}
	\end{figure}

\begin{example} 
	Consider the MMC-TDGL equation over the domain $\Omega = (0,64)^2$, with the initial data given by
	\begin{equation}\label{initial-2}
	\begin{aligned}
	\phi_{1}(x, y,0)&=0.1+0.01\cos(3\pi x/32)\cos(3\pi y/32),\\
	\phi_{2}(x, y,0)&=0.5+0.01\cos(3\pi x/32)\cos(3\pi y/32).
	\end{aligned}
	\end{equation}
\end{example}

We use the triangular mesh with size $h=1/4$ for partition of the domain, and take the time step size as $\tau=0.01$. Figure~\ref{fig_phase_evol2} displays the configuration of the simulated solution $\phi_2$ at a sequence of time instants, $t$=0, 5, 8, 10, 15 and 20, respectively. It is observed that the phase structures have a drastic change in time, and then asymptotically evolve to a steady state, which is consistent with the energy evolution plotted in Figure~\ref{fig_egy}. In addition, the configuration of all three phase variables are presented in Figure~\ref{fig_phase_evol}, at a sequence of later time instants, $t$=25, 80, and 200, respectively. % from which the energy decreases monotonically in Fig \ref{fig_egy}.
The corresponding evolutions of the mass, as well as the maximum and minimum values of the phase variables, are displayed in Figures~\ref{fig_mass}, \ref{fig_max}, respectively. The mass conservation and the positivity property are observed to be preserved in these evolution figures.
\begin{figure}[!htpb]
	\begin{center}    		 %\multiinclude[graphics={width=\textwidth}]{ka12}			
		\includegraphics[scale=0.8]{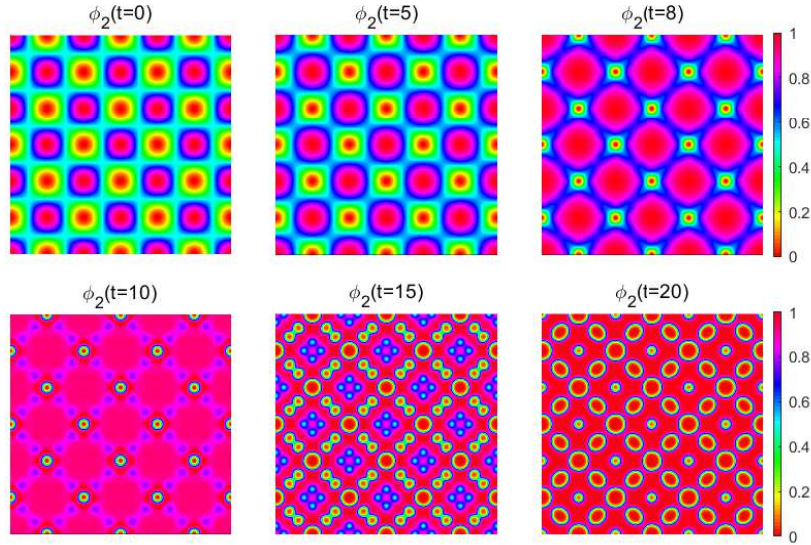}
		\caption{The simulated solution $\phi_2$, at $t$=0, 5, 8, 10, 15 and 20 respectively, in Example 6.2}\label{fig_phase_evol2}	
	\end{center}
\end{figure}
\begin{figure}[!htpb]
	\begin{center}		\includegraphics[scale=1]{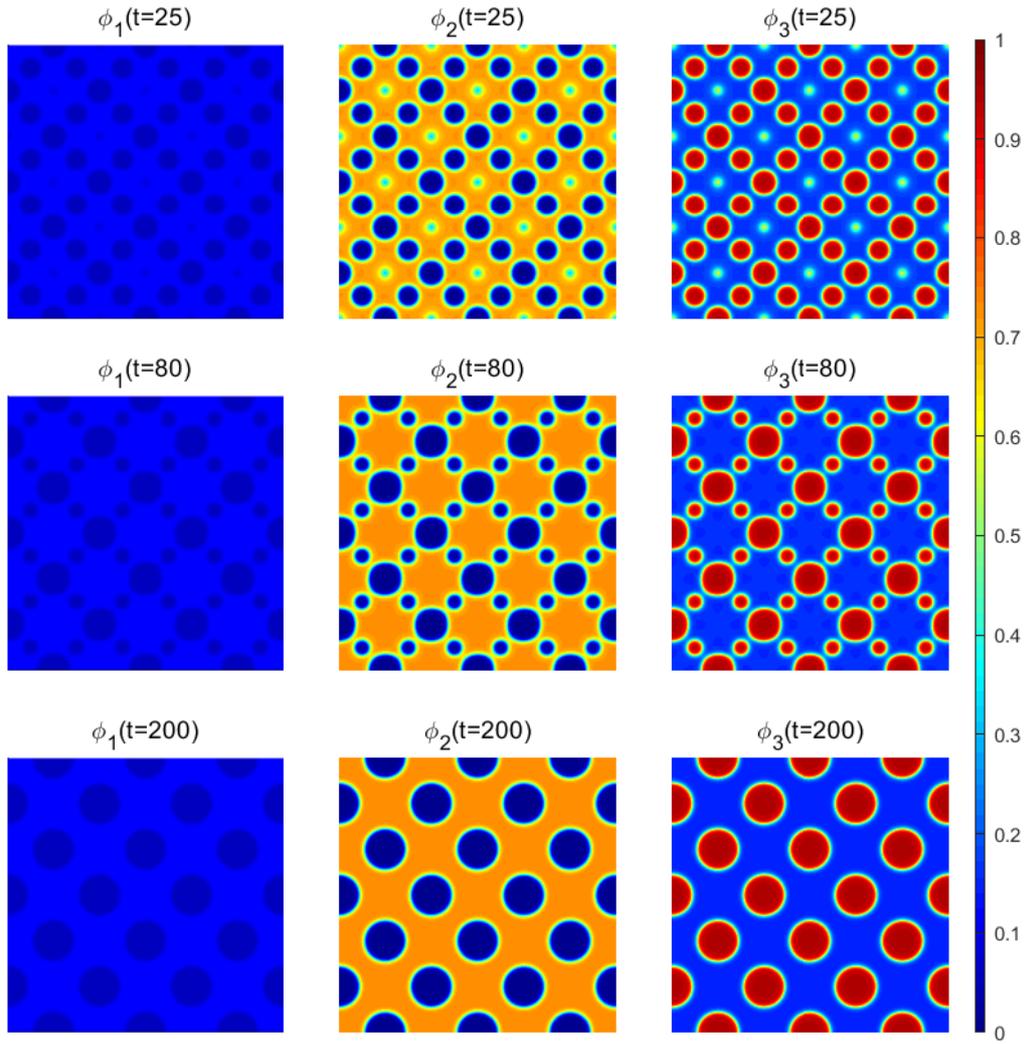}
		\caption{The phase variables plot in Example 6.2, at $t$=25, 80 and 200}\label{fig_phase_evol}	
	\end{center}
\end{figure}
\begin{figure}[!htpb]
	\begin{center}
		%    		 \multiinclude[graphics={width=\textwidth}]{ka12}
		\includegraphics[scale=0.6]{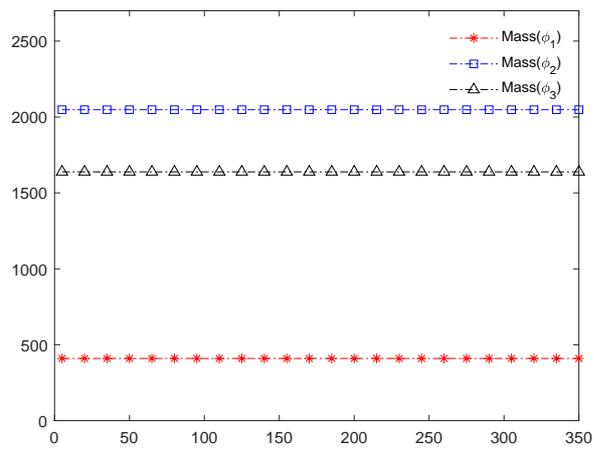}
		\caption{Mass evolution of the phase variables in Example 6.2}\label{fig_mass}
	\end{center}
\end{figure}
\begin{figure}[!htpb]
	\begin{center}
		%    		 \multiinclude[graphics={width=\textwidth}]{ka12}
		\includegraphics[scale=0.8]{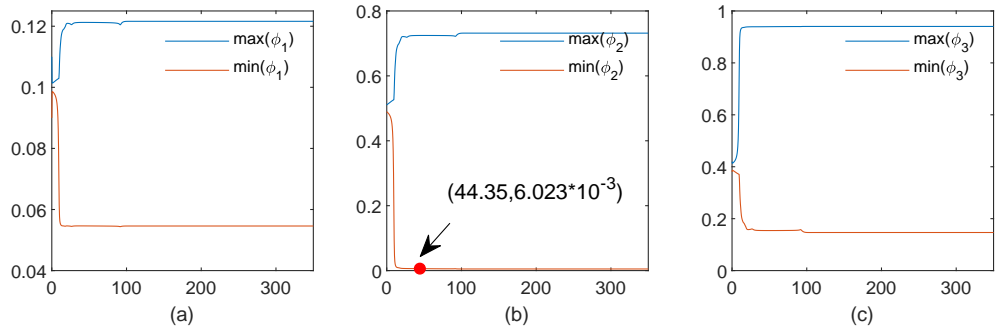}
		\caption{Evolution of the maximum and minimum value of phase variables in Example 6.2}
		\label{fig_max}
	\end{center}
\end{figure}
\begin{figure}[!htpb]
	\begin{center}
		%    		 \multiinclude[graphics={width=\textwidth}]{ka12}
		\includegraphics[scale=0.8]{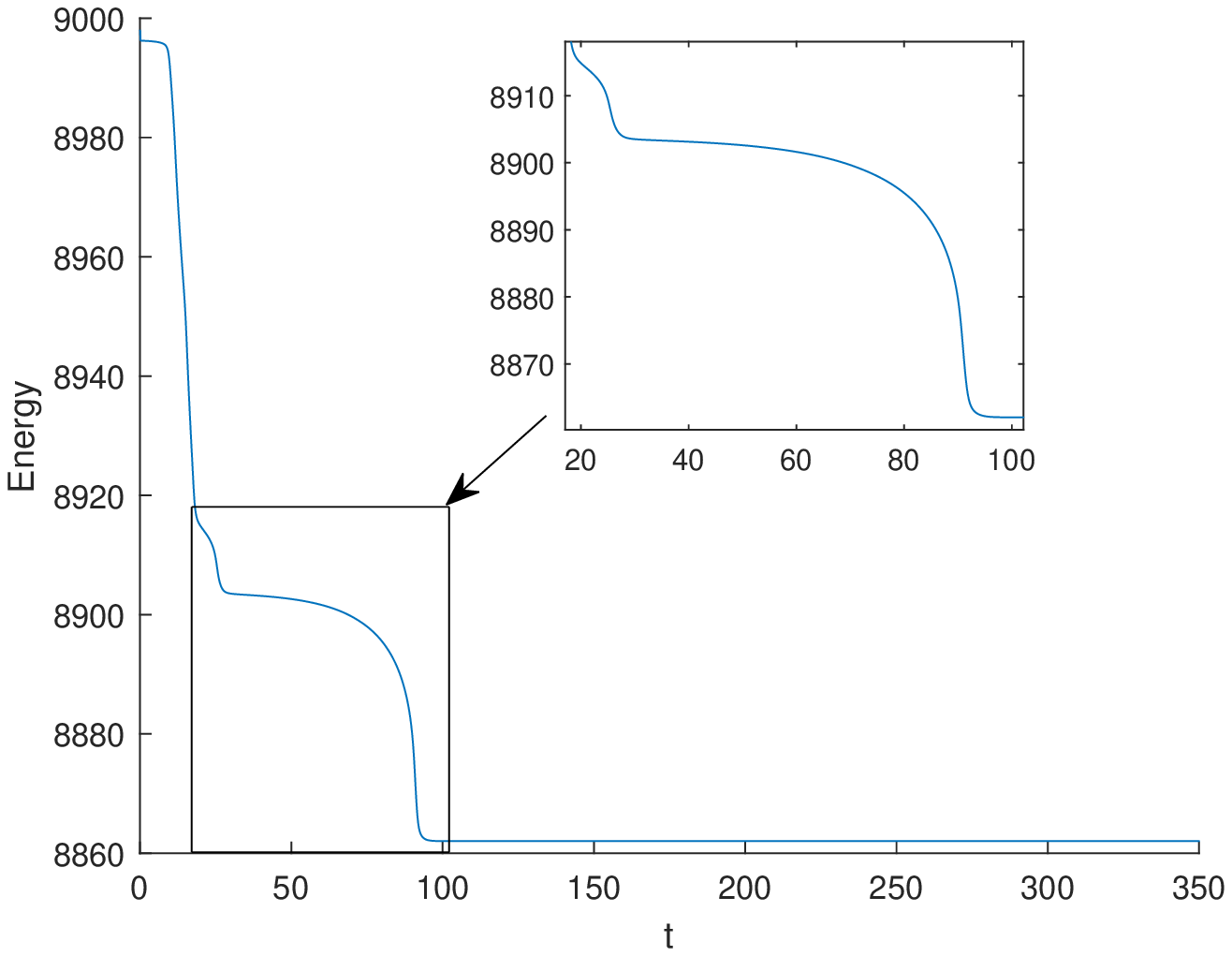}
		\caption{Energy evolution of the simulated solution  in Example 6.2}\label{fig_egy}
	\end{center}
\end{figure}

\begin{example}
Considered the MMC-TDGL equations over the domain $\varOmega = (0,50)^2$, with the initial data given by
	\begin{equation}
	\begin{aligned}
	\phi_{1}(x, y,0)&=0.1+r_{i,j},\\
	\phi_{2}(x, y,0)&=0.5+r_{i,j},
	\end{aligned}
	\end{equation}
	where the $r_{i,j}$ are uniformly distributed random numbers in $[-0.01,0.01]$.	
	\end{example}
	
	We use the uniform triangular mesh with size $h=1/4$, take the time step size as $\tau=0.01$, and focus on the $\phi_2$ variable, which reflects the polymer chain distribution. In this example, the initial concentration of polymer segments reaches $0.5 + r_{ i, j }$, every MMS can be joined by polymer chains since there are enough segments %for the polymer chains
to grow. Thus the reticular structure can be obtained. Figure~\ref{fig_phase_rands} displays the plot of the $\phi_2$ variable at a sequence of time instants, $t$=0, 3.6, 6.52, 8, 10, 26, 85, 278 and 500, respectively. It is observed that the red area in the third row becomes larger, that is, the structure is tighter, which is consistent with \cite{li_phase_2015, ji2021modeling}.
\begin{figure}
	\begin{center}
		\includegraphics[scale=1]{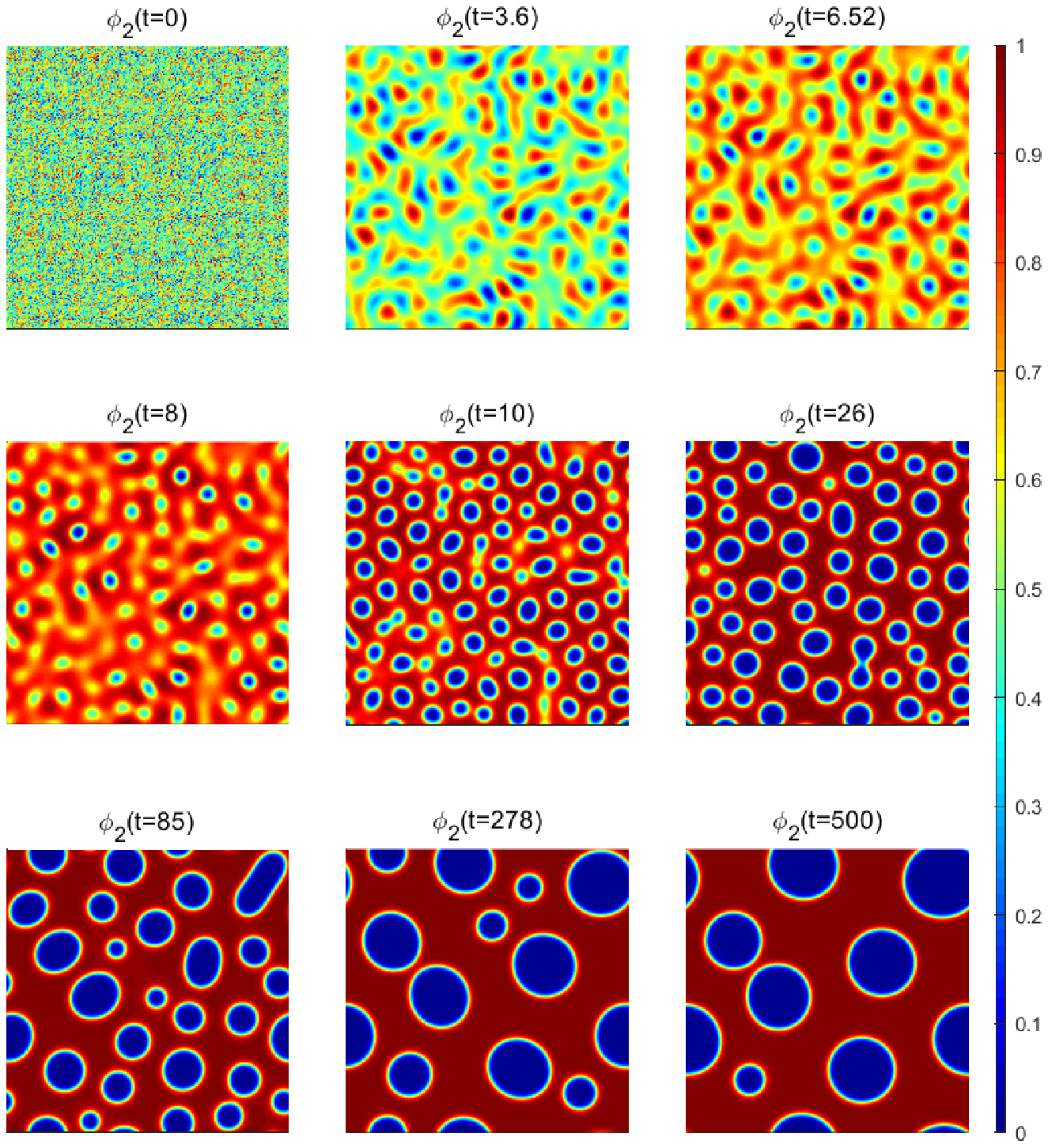}
		\caption{The phase variable plot for $\phi_2$ at a sequence of time instants $t$=0, 3.6, 6.52, 8, 10, 26, 85, 278 and 500 respectively in Example 6.3.}\label{fig_phase_rands}
	\end{center}
\end{figure}		
	
	 The evolution of the corresponding energy is plotted in Figure~\ref{fig_egy_rands}, which indicates a monotone decrease in time. Figures~\ref{fig_max_rands} and \ref{fig_mass_rands} display the maximum and minimum value of the phase variables and the mass. Again, the positivity-preserving property and mass conversation have been  perfectly demonstrated in the numerical simulation.
	\begin{figure}
	\begin{center}
		\includegraphics[scale=0.8]{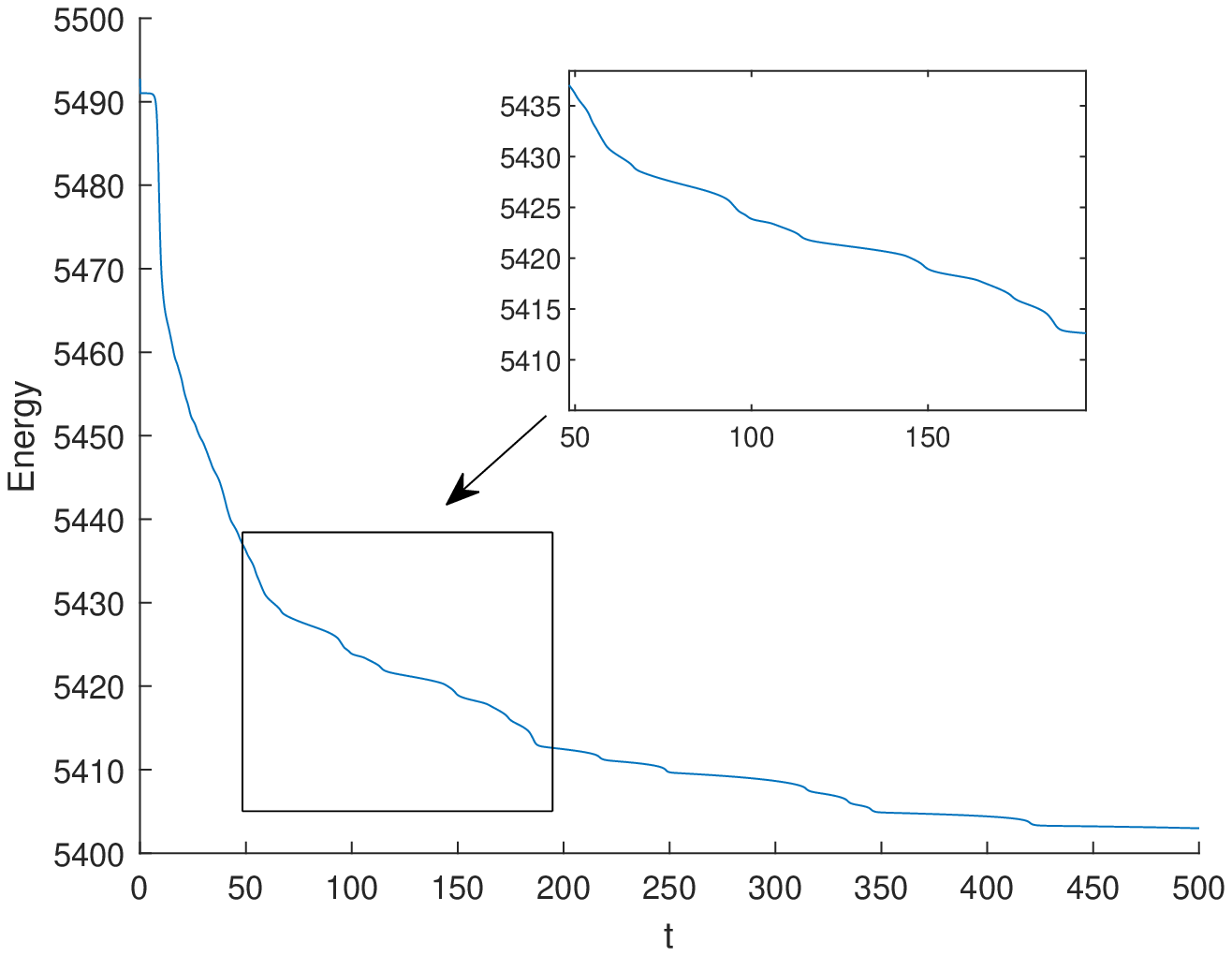}
		\caption{Energy evolution in Example 6.3.}\label{fig_egy_rands}
	\end{center}
\end{figure}
\begin{figure}
	\begin{center}
		\includegraphics[scale=0.8]{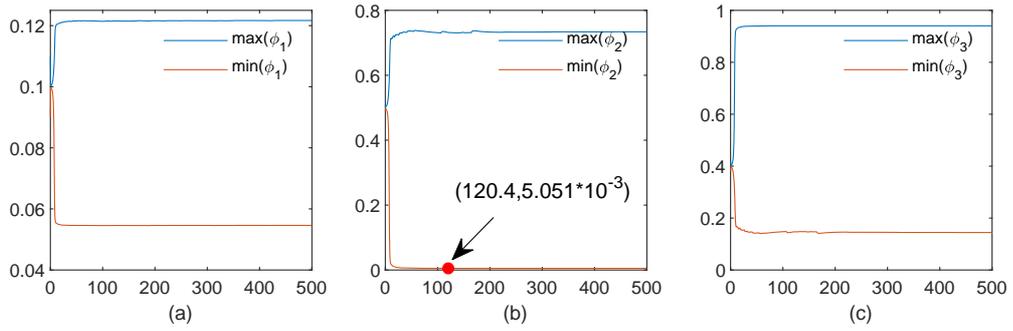}
		\caption{Evolution of the maximum and minimum values of the phase variables $\phi_1,\phi_2$ and $\phi_3$in Example 5.2.}\label{fig_max_rands}
	\end{center}
\end{figure}
\begin{figure}
	\begin{center}
		% \multiinclude[graphics={width=\textwidth}]{ka12}
		%    		\includegraphics[scale=0.3]{}
		\includegraphics[scale=0.6]{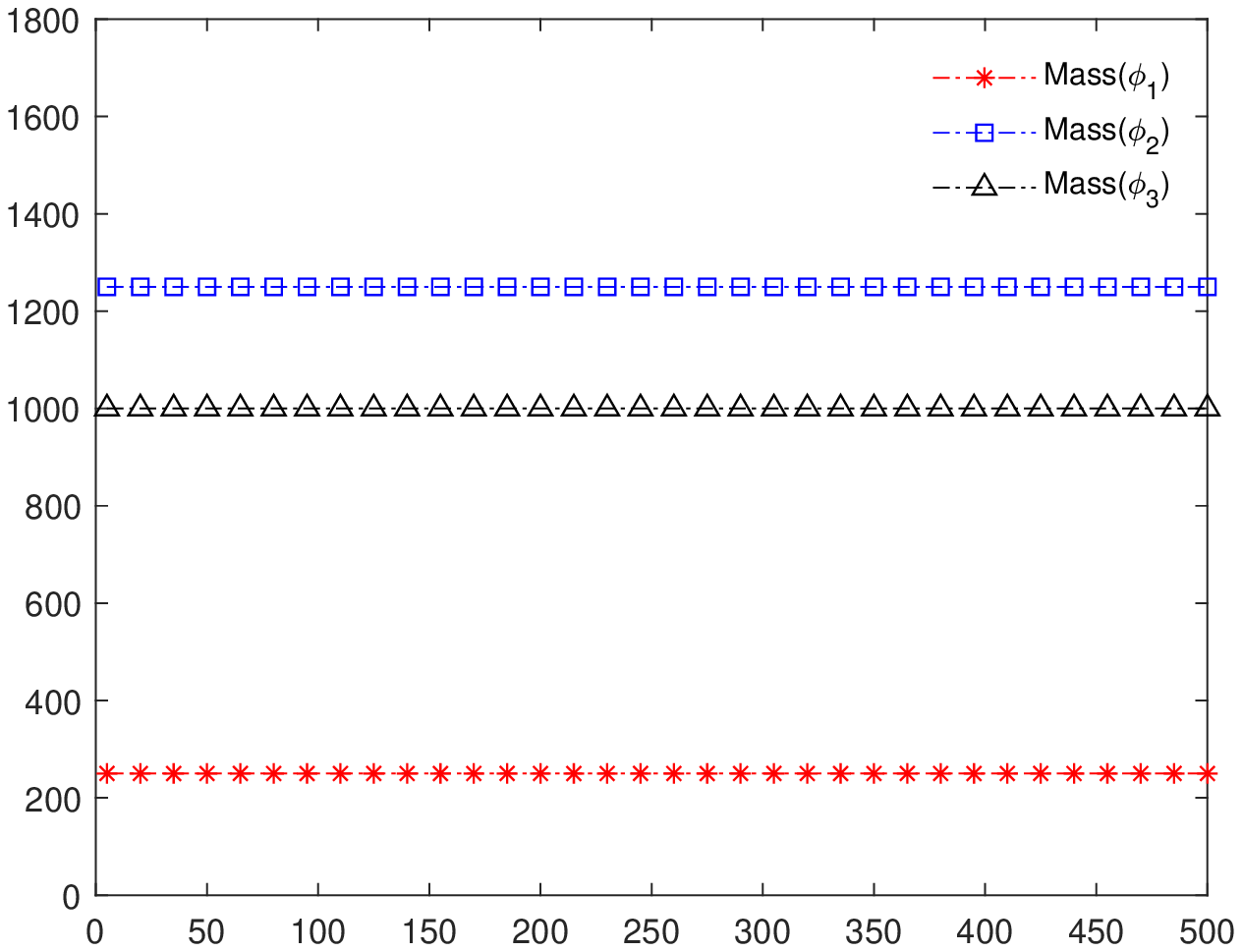}
		\caption{Mass evolution of phase variables in Example 6.3.} \label{fig_mass_rands}
	\end{center}
\end{figure}

\section{Concluding remarks} \label{sec:conclusion}

In this paper, we have developed a positivity-preserving and energy stable finite element scheme for the three-component Cahn-Hilliard flow model involved in macromolecular microsphere composite hydrogels, with the Flory-Huggins-deGennes energy potential in the ternary system. A convex-concave decomposition of the energy functional in multi-phase space is recalled, which in turn leads to an implicit treatment of the logarithmic and the nonlinear surface diffusion terms, as well as an explicit update of the concave expansive linear terms. In the spatial discretization, the mass lumped finite element approximation is applied. Both the positivity preserving property and the unconditional energy stability are theoretically justified, which will be the first such results for a finite element scheme applied to the ternary MMC system. A few numerical examples are presented, which demonstrate the robustness and accuracy of the proposed numerical scheme.

	\section*{Declarations} 
	
\noindent 	
{\bf Funding:} \,  W.B.~Chen is partially supported by the National Natural Science Foundation of China~(NSFC) 12071090, Shanghai Science and technology research program 19JC1420101 and a 111 project B08018.  Z.R.~Zhang is partially supported by NSFC No.11871105 and Science Challenge Project No. TZ2018002. C.~Wang is partially supported by the NSF DMS-2012669, S.M. Wise is partially supported by the NSF NSF-DMS 1719854, DMS-2012634.

\noindent
{\bf Conflicts of interest/Competing interests:} \, not applicable 

\noindent 
{\bf Availability of data and material:} \, not applicable 

\noindent
{\bf Code availability:} \, not applicable 

\noindent
{\bf Ethics approval:} \, not applicable 

\noindent
{\bf Consent to participate:} \, not applicable 

\noindent 
{\bf Consent for publication:} \, not applicable

\bibliographystyle{abbrv}
\bibliography{final}

\end{document}